\definecolor{grb}{rgb}   {0.0,   0.99,   0.5 }
\definecolor{mg}{rgb}   {0.85,  0.,    0.85}
\newcommand{\Bk}{\color{black}}
\numberwithin{equation}{section}
\newcommand{\norm}[1]{\parallel #1 \parallel}
\newcommand{\ds}{\displaystyle}
\newcommand{\be}{\begin{equation}}
\newcommand{\ee}{\end{equation}}
\newcommand{\ba}{\begin{array}}
\newcommand{\ea}{\end{array}}
\newcommand{\bp}{\begin{proof}}
\newcommand{\ep}{\end{proof}}
\def\boldsymbol{\bf}
\def\f {{\boldsymbol f}}
\def\n{{\boldsymbol n}}
\def\uu{{\boldsymbol u}}
\def\vv{{\boldsymbol v}}
\def\w {{\boldsymbol w}}
\def\x {{\boldsymbol x}}
\def\z{{\boldsymbol z}}
\def\R{{\rm I\hspace{-0.50ex}R} }
\def\P{{\rm I\hspace{-0.50ex}P} }
\def\w{{\bf w}}
\def\f{{\bf f}}
\def\v{{ \bf v}}
\def\u{{\bf u}}
\def\0{{\bf 0}}
\def\z{{\bf z}}
\def\x{{\bf x}}
\def\s{s}
\def\n{{\bf n}}
\def\div{\operatorname{div}}
\def\curl{\operatorname{\bf curl}}
\newcommand{\Om}{{\Omega}}
 \newcommand{\dd}{{\rm div}}
\newtheorem{lem}{Lemma}[section]
\newtheorem{thm}[lem]{Theorem}
\newtheorem{hyp}[lem]{Assumption}
\newtheorem{Rem}[lem]{Remark}
\newtheorem{prop}[lem]{Proposition}
\newtheorem{propri}[lem]{Property}
\renewcommand{\epsilon}{\varepsilon}
\def\twoplot[#1]#2#3#4#5{
\begin{figure}[hbt]
\begin{multicols}{2}
\begin{center}
    \includegraphics*[#1]{#2}
    \caption{\label{#2} #4}
\end{center}
\begin{center}
    \includegraphics*[#1]{#3}
    \caption{\label{#3} #5}
\end{center}
\end{multicols}
\end{figure}
}
\begin{document}
\bibliographystyle{plain}
\title[\scshape{A Posteriori error estimates for the coupling system}]
{A Posteriori error estimates for Darcy-Forchheimer's problem coupled with the convection-diffusion-reaction equation }
\author[T. Sayah]{Toni Sayah$^\dagger$}
\author[G. Semaan]{Georges Semaan$^\dagger$}
\author[F. Triki]{Faouzi Triki$^\ddagger$}
\thanks{
\today.
\newline
$^\dagger$ Unit\'e de recherche "Math\'ematiques et Mod\'elisation", CAR, Facult\'e des Sciences,
Universit\'e Saint-Joseph de Beyrouth, B.P
11-514 Riad El Solh, Beyrouth 1107 2050, Liban. \\
E-mails: toni.sayah@usj.edu.lb, georges.semaan2@net.usj.edu.lb,\\
$^\ddagger$ Laboratoire Jean Kuntzmann, UMR CNRS 5224, Universit\'e Grenoble-Alpes, 700 Avenue Centrale, 38401 Saint-Martin-d'H\`eres, France.\\
E-mails: faouzi.triki@univ-grenoble-alpes.fr.\\
}
\maketitle

\begin{abstract}
\noindent In this work we derive {\it a posteriori} error estimates for the convection-diffusion-reaction equation coupled with the Darcy-Forchheimer problem by a nonlinear external source depending on the concentration of the fluid. We introduce the variational formulation associated to the problem, and discretize it by using the finite element method. We prove optimal {\it a posteriori} errors with two types of calculable error indicators. The first one is linked to the linearization and the second one to the discretization. Then we find upper and lower error bounds under additional regularity assumptions on the exact solutions. Finally, numerical computations are performed to show the effectiveness of the obtained error indicators.

\medskip\noindent{\sc Keywords.} Darcy-Forchheimer problem; convection-diffusion-reaction equation; finite element method; {\it a posteriori} error estimates.
\end{abstract}


\section{Introduction.}
\label{intro}
\noindent

This work deals with the {\it a posteriori} error estimate of the Darcy-Forccheimer system coupled with the convection-diffusion-reaction equation. We consider following system of equations:
\begin{equation*}(P)
 \left\{
\begin{array}{ccll}
\medskip
  \ds \frac{\mu}{\rho} K^{-1}  \u + \frac{\beta}{\rho} |\u| \u+ \nabla p & =  & \f(.,C) & \mbox{in}\,\Omega,\\
\medskip
  \dd\, \uu & = & 0 & \mbox{in}\,\Omega,\\
\medskip
  -\alpha \Delta C + \uu\cdot \nabla\, C  + r_0 C& = & g  & \mbox{in}\,\Omega,\\
\medskip
 \uu\cdot\n & = & 0 & \mbox{on}\,\Gamma,\\
 C & = & 0 & \mbox{on}\,\Gamma,
\end{array}
\right.
\end{equation*}
where $\Omega\subset\R^d$, $d=2,3$, is a bounded simply-connected open domain, having a Lipschitz-continuous boundary $\Gamma$ with an outer unit normal $\n$. The unknowns are the velocity $\uu$, the pressure $p$ and
the concentration $C$ of the fluid.
$|.|$ denotes the Euclidean norm, $|\u|^2 = \u \cdot \u$. The parameters $\rho$, $\mu$ and $\beta$ represent the density of the fluid, its viscosity and its dynamic viscosity, respectively. $\beta$ is also referred as Forchheimer number when it is a scalar positive constant. The diffusion coefficient $\alpha$ and the parameter $r_0$ are strictly positive constants. The function $\f$ represents an external force that depends on the concentration~$C$ and the function $g$ represents an external concentration source. $K$ is the permeability tensor, assumed to be uniformly positive definite and bounded such that there exist two positive real numbers $K_m$ and $K_M$ such that
\begin{equation}\label{KmM}
0 < K_m  \le \left\|K^{-1}\right\|_{L^\infty(\Om)^{d\times d}}  \le K_M.\end{equation}
{
It is important to  note that $K_m$ should be smaller than the smallest eigenvalue of $K^{-1}$ over $\Omega$ and $K_M$ could be very large.}\\
System $(P)$  represents the coupling of the Darcy-Forchheimer problem with the convection-diffusion-reaction equation satisfied by the concentration of the fluid. The same system  can represent the coupling of the Darcy-Forchheimer system with the heat equation by replacing the concentration $C$ by the temperature $T$ and setting $r_0=0$.\\
Darcy's law (see \cite{Neum} and \cite{Whit} for the theoretical derivation) is an equation that describes the flow of a fluid through a porous medium. This law was formulated by  Darcy based on experimental results. It is simply the first equation of the system $(P)$ where the dynamic viscosity $\beta=0$. In the case where the velocity of the fluid is higher and the porosity is non uniform, Forchheimer proposed the Darcy-Forchheimer equation (see \cite{Forchheimer}) which is the first equation of system $(P)$ by adding the non-linear term). Several numerical and theoretical studies of the Darcy-Forchheimer equation were performed, and among others we mention \cite{GW,LMJ,HH,SJ,sayahdarcyF}. \\
For the coupling of Darcy's equation with the heat equation, we refer to \cite{SC} where the system is treated using a spectral method.
The authors in \cite{BDGHMS} and \cite{DD} considered the same stationary system but coupled with a nonlinear viscosity that depends on the temperature. In \cite{sayahdib}, the authors derived an optimal {\it a posteriori} error estimate for each of the numerical schemes proposed in \cite{BDGHMS}. We can also refer to \cite{3O} where the authors used a vertex-centred finite volume method to discretize the coupled system.
For physical applications of system $(P)$, we can refer to \cite{physapp}. In \cite{semaan}, we introduced the variational formulation associated to system $(P)$, and we showed uniqueness under additional constraints on the concentration. Then, we discretized the system by using the finite element method and we showed the existence and uniqueness of corresponding solutions. Moreover, we established the {\it{a priori}} error estimate between the exact and numerical solutions and introduced a numerical scheme where we studied the corresponding convergence.  \\
 I. Babu$\check{s}$ka was the first  who introduced \emph{a posteriori} analysis (see \cite{Babu78C}), then it was developed by {R.~Verf\"{u}rth} \cite{Verfurth1996}, and has been the object of a large number of publications. Many works have established the \emph{a posteriori} error estimates for the Darcy flow, see for instance \cite{alonso, braess, carstensen, lovadina}. In \cite{DGHS}, the authors established {\it a posteriori} error estimates for Darcy's problem coupled with the heat equation. Sayah T. (see \cite{SAY}) established the \emph{a posteriori} error estimates for the Brinkman-Darcy-Forchheimer problem.  Moreover, in \cite{semaan2}, we established the {\it{a posteriori}} estimates for the Darcy-Forchheimer problem without the convection-diffusion-reaction equation. Furthermore, several works established the {\it a priori} and {\it a posteriori} errors for the time-dependent convection-diffusion-reaction equation coupled with Darcy's equation(see \cite{7, 77}).\\\\
 The main goal of this work is to derive the {\it{a posteriori}} error estimates associated to the coupling system $(P)$ for the numerical scheme introduced in \cite{semaan}. We start by recalling some auxiliary results from \cite{semaan} concerning the discretization of system $(P) $, and the numerical scheme with the corresponding convergence. In a second step, we establish the \emph{a posteriori} error estimates where the error between the exact and iterative numerical solutions are bounded by two types of local indicators: the indicators of discretization and the indicators of linearization. Then, we show the corresponding efficiency by bounding each indicator by the local error.
Finally, we present some numerical computations in order to show the effectiveness of the proposed method. \\

\noindent The outline of the paper is as follows:
\begin{itemize}
\item Section \ref{sec:prob cont} is devoted to the continuous problem.
\item In section \ref{sec:discretisation}, we introduce the discrete and iterative problems and recall their
main properties.
\item  In section \ref{sec:posteriori}, we provide the error indicators and prove the upper and lower error bounds.
\item  Numerical results validating the theory are presented in Section \ref{numerique}.
\end{itemize}
We further assume that the volumic and boundary sources verify the following conditions:
\begin{hyp}\label{hypdata}
The functions $\f$ and $g$ verify:
\begin{enumerate}
\item $\f$ can be written as follows:
\begin{equation}\label{formf}
\forall \x \in \Omega, \forall C\in \R,\qquad\f(\x,C)  = \f_0(\x) + \f_1(C),
\end{equation}
 where $\f_0 \in L^{\frac{3}{2}}(\Omega)^d$ and ${\f}_1$ is Lipschitz-continuous with constant $c_{\f_1}>0$, and verifies ${\f}_1(0)=0$.  In particular we have
\[
\forall \xi \in \R, \, |\f_1(\xi)| \le c_{\f_1} |\xi |,
\]

\item $g\in L^2(\Omega)$.
\end{enumerate}
\end{hyp}

\section{Variational Formulation}
\label{sec:prob cont}
In order to introduce the variational formulation, we recall some classical Sobolev spaces and their properties.
Let $\x=(x_1,x_2, \cdots, x_d)\in \R^d $, and let  $\alpha=(\alpha_1,\alpha_2, \dots ,\alpha_d)$ be a $d$-uplet of non negative
integers, set $|\alpha|=\sum_{i=1}^d \alpha_i$, and define the partial
derivative $\partial^\alpha$ by
$$
\partial^{\alpha}=\ds \frac{\partial^{|\alpha|}}{\partial x_1^{\alpha_1}\partial
x_2^{\alpha_2}\dots\partial x_d^{\alpha_d}}.
$$
Then, for any positive integer $m$ and number $p\geq 1$, we recall
the classical Sobolev space \cite{s1,s1'}
\begin{equation}
\label{eq:Wm,p}
W^{m,p}(\Omega)=\{v \in L^p(\Omega);\,\forall\,|\alpha|\leq
m,\;\partial^{\alpha} v \in L^p(\Omega)\},
\end{equation}
equipped with the seminorm
\begin{equation}
\label{eq:semnormWm,p}
|v|_{W^{m,p}(\Omega)}=\big(\sum_{|\alpha|=m} \int_{\Omega}
|\partial^{\alpha} v|^p \,d\x\,\big)^{\frac{1}{p}}
\end{equation}
and the norm
\begin{equation}
\label{eq:normWm,p}
\|v\|_{W^{m,p}(\Omega)}=\big(\sum_{0\leq k\leq m}
|v|_{W^{k,p}(\Omega)}^p \big)^{\frac{1}{p}}.
\end{equation}
When $p=2$, this space is the Hilbert space $H^m(\Omega)$.
The definitions of these spaces are extended straightforwardly to
vectors, with the same notation, but with the following
modification for the norms in the non-Hilbert case. Let $\vv$ be a
vector valued function; we set
\begin{equation}
\label{eq:normLp}  \|\vv\|_{L^p(\Omega)^d} \Bk = \big(\int _{\Omega}
|\vv|^p\,d\x\, \big)^{\frac{1}{p}},
\end{equation}
where $|.|$ denotes the Euclidean vector norm.
\\
For vanishing boundary values, we define
\begin{equation}
\label{eq:H10}
\begin{array}{ccl}
H^1_0(\Omega) & = & \{v\in H^1(\Omega);\, v_{|_{\Gamma}}=0\},\\
W^{1,q}_0(\Omega) & = & \{v\in W^{1,q}(\Omega);\, v_{|_{\Gamma}}=0\}.\\
\end{array}
\end{equation}
We shall often use the following Sobolev imbeddings: for any real number
$p\geq 1$ when $d=2$, or $1 \le p \le \frac{2\,d}{d-2}$ when $d\ge 3$, there exist constants $S_p$ and $S_p^0$ such that
\begin{equation}
\label{eq:Sp}
\forall\, v\in H^1(\Omega),\,\,\|v\|_{L^p(\Omega)}\leq
{S_p} \|v\|_{H^1(\Omega)}
\end{equation}
and
\begin{equation}
\label{pcc}
\forall\,v\in H^1_0(\Omega),\,\,\|v\|_{L^p(\Omega)}\leq
{S_p^0} |v|_{H^1(\Omega)}.
\end{equation}
When $p=2$, (\ref{pcc}) reduces to Poincar\'e's inequality.\\
To deal with the Darcy-Forchheimer, we recall the space
\begin{equation}
\label{eq:L20}
L^2_0(\Omega)  =  \big\{v\in L^2(\Omega);\,\ds \int _{\Omega}
v\,d\x\,=0\big\}.
%
\end{equation}
%
%
%
%
It follows from the nonlinear
term in the system $(P)$ that the velocity $\u$ and the test function $\v$ must belong to $L^3(\Omega)^d$; then, the gradient of the pressure must belong to $L^{\frac{3}{2}}(\Omega)^d$. Furthermore, the concentration $C$ must be in $H^1_0(\Omega)$. Thus, we introduce the spaces (see \cite{GW})
\[
X=L^3(\Omega)^d,\quad
M=W^{1,\frac{3}{2}}(\Omega) \cap L^2_0(\Omega),\quad
Y=H^1_0(\Omega).
\]
Furthermore, we recall the following { inf-sup} condition between $X$ and $M$ (see \cite{GW}),
\begin{equation} \label{infsup}
\ds \inf_{q \in M} \,  \sup_{\v \in X} \ds
\frac{\ds \int_{\Omega}\v(\x)\cdot\nabla q(\x)\,d\x} {\left\|\v\right\|_{L^3(\Omega)^d}\left\|\nabla q\right\|_{L^\frac{3}{2}(\Omega)}}=1.
\end{equation}
We introduce the following variational formulation associated to problem $(P)$:
\begin{equation*}(V_a)
\begin{cases}
\medskip
\text{Find} \,(\u,p,C) \in X \times M \times Y \text{ such that:} \\
\medskip
\begin{aligned}
\forall \v \in X, \; \frac{\mu}{\rho}\int_{\Omega}(K^{-1}\u(\x))\cdot \v(\x)\,d\x + \frac{\beta}{\rho}\int_{\Omega}|\u(\x)|
\u(\x)\cdot\v(\x)\,d\x &\ds +\int_{\Omega}\nabla p(\x)\cdot\v(\x)\,d\x\\
&=\int_{\Omega}\f (\x,C(\x))\cdot\v(\x)\,d\x,
\end{aligned}\\
\medskip
\forall q \in M, \; \ds \int_{\Omega}\nabla q(\x)\cdot\u(\x)\,d\x=0, \\
\begin{aligned}
\forall S \in Y, \; \alpha \int_{\Omega} \nabla C(\x)\cdot\nabla S(\x)\,d\x + \int_{\Omega}(\u \cdot \nabla C)(\x)S(\x)\,d\x
+ & r_0\int_{\Omega}C(\x)S(\x)\,d\x 
=\int_{\Omega}g(\x)S(\x)\,d\x.
\end{aligned}
\end{cases}
\end{equation*}
The existence and uniqueness of the solutions to the problem $(V_a)$ can be found in \cite{semaan}.
To study the discretization of the  variational problem $(V_a)$, it is convenient to introduce the nonlinear mapping:
\begin{equation*}
    \begin{array}{rccl}
  \mathcal{A}:&  L^3(\Omega)^d &\mapsto&L^\frac{3}{2}(\Omega)^d \\
    &\v &\mapsto &\mathcal{A}(\v)=\ds \frac{\mu}{\rho}K^{-1}\v+\frac{\beta}{\rho}|\v|\v.
    \end{array}
\end{equation*}
We refer to \cite{GW,monot} for the following useful results.
\begin{propri}
$\mathcal{A}$ satisfies the following properties:
\begin{enumerate}
\item $\mathcal{A}$ maps $L^3(\Omega)^d$ into $L^{\frac{3}{2}}(\Omega)^d$ and we have for all $\v \in L^3(\Omega)^d$:
$$
\left\|\mathcal{A}(\v)\right\|_{L^{\frac{3}{2}}(\Omega)^d} \leq
\frac{\mu}{\rho}\left\|K^{-1}\right\|_{\infty}
\left\|\v\right\|_{L^{\frac{3}{2}}(\Omega)^d}+
\frac{\beta}{\rho}\left\|\v\right\|_{L^{3}(\Omega)^d}^2.
$$
\item For all $(\v,\w) \in \mathbb{R}^d \times \mathbb{R}^d$, we have,
 \begin{equation} \label{deuxestimA}
|\mathcal{A}(\v)-\mathcal{A}(\w)| \leq
\left( \frac{\mu}{\rho}\left\|K^{-1}\right\|_{\infty} + \frac{2\beta}{\rho}(|\v|+|\w|)\right)|\v-\w|.
\end{equation}
\item $\mathcal{A}$ is monotone from $L^{3}(\Omega)^d$ into $L^{\frac{3}{2}}(\Omega)^d$, and we have for all $\v,\w \in L^{3}(\Omega)^d$,
$$
\int_{\Omega} (\mathcal{A}(\v(\x))-\mathcal{A}(\w(\x)))\cdot(\v(\x)-\w(\x))\,d\x \geq
\max(c_m \left\|\v-\w\right\|_{L^3(\Omega)^d}^3, \ds \frac{\mu}{\rho}K_m\left\|\v-\w\right\|_{L^2(\Omega)^d}^2),
$$
where $c_m$ is a strictly positive constant.
\item  $\mathcal{A}$ is coercive in $L^{3}(\Omega)^d$:
{
\[
\ds \underset{\left\|\u\right\|_{L^3(\Omega)^d \to \infty}}{\lim} \frac{ \ds \int_{\Omega}\mathcal{A}(\u)\cdot \u \,d\x}{\left\|\u\right\|_{L^3(\Omega)^d}}= \ds +\infty.
\]
}

\item  $\mathcal{A}$ is  hemi-continuous in $L^{3}(\Omega)^d$: for fixed $\u,\v \in L^{3}(\Omega)^d$, the mapping
\[
t \longrightarrow \ds \int_\Omega \mathcal{A}(\u + t \v)\cdot \v \,d\x
\]
is continuous from $\R$ into $\R$.
\end{enumerate}
\end{propri}

\section{Discretization} \label{sec:discretisation}

In this section, we recall the discretization of problem $(P)$ introduced in \cite{semaan}, and restrict the analysis to dimensions $d=2, 3$. We further assume that $\Omega$ is a polygon when $d=2$ or polyhedron when $d=3$, so it can be completely meshed. Next, we describe the space discretization.  A regular family of triangulations (see Ciarlet~\cite{PGC})
$( \mathcal{T}_h )_h$ of $\Omega$, is a set of closed non degenerate
triangles for $d=2$ or tetrahedra for $d=3$, called elements,
satisfying
\begin{itemize}
\item for each $h$, $\bar{\Omega}$ is the union of all elements of
 $\mathcal{T}_h$;
\item the intersection of two distinct elements of  $\mathcal{T}_h$ is
either empty, a common vertex, or an entire common edge (or face
when $d=3$);
\item the ratio of the diameter   {  $h_{\kappa}$  of an element $\kappa \in \mathcal{T}_h$  to
the diameter  $\rho_\kappa$}  of its inscribed circle when $d=2$ or ball when $d=3$
is bounded by a constant independent of $h$, that is, there exists a strictly positive constant $\sigma$ independent of $h$ such that,
\begin{equation}
\label{eq:reg}
{
\ds \max_{\kappa \in \mathcal{T}_{h}} \frac{h_{ \kappa}}{\rho_{ \kappa}} \le \sigma.}
\end{equation}
\end{itemize}
As usual, $h$ denotes the maximal diameter
of all elements of $\mathcal{T}_{h}$.   To define the finite element functions,  let $r$ be a non negative integer.  For each {  $\kappa$ in $\mathcal{T}_{h}$, we denote by  $\mathbb{P}_r(\kappa)$  the space of restrictions to $\kappa$ of polynomials in $d$ variables and total degree at most  $r$, with a similar notation on the faces or edges of $\kappa$}. For every edge (when $d=2$) or face (when $d=3$) $e$ of the mesh  $\mathcal{T}_h$, we denote by $h_e$ the diameter of $e$.\\
\noindent In order to use inverse inequalities, we assume that the family or triangulations is uniformly regular in the following sens: there exists $\beta_0 >0$  such that, for every element $\kappa\in {\mathcal T}_h$, we have
\begin{equation}  \label{meshunifregular} h_\kappa \ge \beta_0 h.
\end{equation}

\noindent  We shall use the following inverse inequality:  for any numbers $p,q \geq 2$,  for any dimension $d$, and for any non negative integer $r$,  there exist  constants $c_I(p)>0$, $c_J(q)>0$, and $c_L>0$ such that for any polynomial function $v_h$ of degree $r$ on an element $\kappa$ or an edge (when $d = 2$) or face (when $d = 3$) $e$ of the mesh $\mathcal{T}_{h}$,
\begin{equation}
\label{eq:inversin}
\begin{array}{ll}
\medskip
 \|v_h\|_{L^p(\kappa)}\leq  c_I(p)  h_\kappa^{\frac{d}{p}-\frac{d}{2}}\|v_h\|_{L^2(\kappa)},\\
 \|v_h\|_{L^q(e)}\leq  c_J(q)  h_e^{\frac{d-1}{q}-\frac{d-1}{2}}\|v_h\|_{L^2(e)}, \\
 |v_h|_{H^1(\kappa)} \leq c_L h_{\kappa}^{\frac{d}{2}-\frac{d}{p}-1}\left\|v_h\right\|_{L^p(\kappa)},
 \end{array}
\end{equation}
where $c_I$, $c_J$ and $c_L$ depend on the regularity parameter $\sigma$ of \eqref{eq:reg}.\\

Let $X_{h} \subset X$, $M_{h}\subset M$ and $Y_h \subset Y$ be the discrete spaces corresponding to the velocity, the pressure and the concentration.
\subsection{Discrete Scheme}
We recall the discrete problem introduced in \cite{semaan}:
 Find $(\u_h,p_h,C_h)\in X_h \times M_h \times Y_h$ such that
\begin{equation}
(V_{ah})
\left\{
\begin{array}{ll}
\medskip
\forall \v_h \in X_{h}, \quad \ds \int_\Omega \mathcal{A} (\u_h) \cdot \v_h d\x
+ \int_\Omega \nabla p_h \cdot \v_h \, d\x = \ds \int_\Omega \f(C_h) \cdot \v_h \, d\x,\\
\medskip
\forall q_h\in M_{h}, \quad \ds \int_\Omega \nabla q_h \cdot \u_h \, d\x = 0,\\
\ds \forall S_h \in Y_h, \; \alpha \int_{\Omega} \nabla C_h\cdot\nabla S_h\,d\x + \int_{\Omega}(\u_h\cdot \nabla C_h)S_h\,d\x
+  \frac{1}{2} \int_\Omega \div(\u_h) \, C_h S_h d\x\\
\hspace{8cm} \ds  + r_0\int_{\Omega}C_h S_h\,d\x = \ds \int_{\Omega}g S_h\,d\x.
\end{array}
\right.
\end{equation}

In the following, we will introduce the finite dimension spaces $X_h,M_h$ and $Y_h$. Let $\kappa$ be an element of ${\mathcal T}_h$ with vertices $a_i$, $1\leq i \leq d+1$, and
corresponding barycentric coordinates $\lambda_i$. We denote by $b_\kappa \in \mathbb{P}_{d+1}(\kappa)$ the basic bubble function :
\begin{equation}
\label{eqn:bbl}
b_\kappa(\x)=\lambda_1(\x)...\lambda_{d+1}(\x).
\end{equation}
We observe that $b_\kappa(\x)=0$ on $\partial \kappa$ and that $b_\kappa(\x)>0$
in the interior of $\kappa$.\\
We introduce the following discrete spaces:
\begin{equation}\label{defespaceprem}
\begin{split}
X_{h}=&\{\vv_h \in
(\mathcal{C}^0(\bar{\Omega}))^d;\;\forall\, \kappa \in \mathcal{T}_h,\;
\vv_{h}|_\kappa\in {\mathcal{P}(\kappa)}^d\},\\
M_{h}=&\{  q_h\in C^0(\bar{\Omega}); \, \forall\, \kappa \in \mathcal{T}_h, \;q_{h}|_\kappa \in \P_1 (\kappa)\} \cap L^2_0(\Omega), \\
Y_h =& \{q_h\in C^0(\bar{\Omega}); \, \forall\, \kappa \in \mathcal{T}_h, \;q_{h}|_\kappa \in \P_1 (\kappa)\}\cap H^1_0(\Omega),\\
V_{h} =& \{ \v_h \in X_{h}; \forall q_h \in M_{h}, \ds \int_\Omega \nabla q_h \cdot \v_h \, d\x= 0  \},
\end{split}
\end{equation}
where
$$\mathcal{P}(\kappa)= \mathbb{P}_1(\kappa) \oplus {\rm Vect}\{b_\kappa\}.$$
In this case,  the following inf-sup condition holds \cite{GR}:
\begin{equation}
\label{infsuph1}
\forall\, q_h\in M_{h},\; \sup_{\v_h\in X_{h}}\ds \frac{\ds \int_{\Omega} \nabla q_h \cdot \v_h \,  d\x\,}{\|\v_h\|_{X_h}}\geq \beta_2 \|q_h\|_{M_{h}},
\end{equation}
where $\beta_2$ is a strictly positive constant independent of $h$.\\
The existence and uniqueness of solutions of problem $(V_{ah})$  can be deduced
from inf-sup condition \eqref{infsuph1}  (see for instance  \cite{semaan}).\\

We shall use the following results (see \cite{semaan} and \cite{semaan2}):
\begin{enumerate}
\item For the concentration: there exists an approximation operator (when $d=2$, see Bernardi
and Girault~\cite{s5} or Cl\'ement~\cite{s3}; when $d =2$ or $d=3$, see Scott and Zhang~\cite{ZC}), $R_h$ in ${\mathcal L}(W^{1,p}(\Omega); Y_{h})$ such that for all $\kappa$ in $\mathcal{T}_h$, $m=0,1$, $l=0,1$, and all $p\ge 1$,
\begin{equation}\label{1.2} \forall\, S \in
W^{l+1,p}(\Omega),\,\,|S-R_h(S)|_{W^{m,p}(\kappa)}\leq c(p,m,l)
\,h^{l+1-m}|S|_{W^{l+1,p}(\Delta_\kappa)},
\end{equation}
where $\Delta_\kappa$ is the macro element containing the values of $S$ used in defining $R_h(S)$. Furthermore for all $\kappa$ in  $\mathcal{T}_h$, for all $e$ in $\partial \kappa$ and for all $ S \in H^{1}(\Omega)$,
\begin{equation}\label{estgam}
\| S - R_h S \|_{L^{2}(e)} \le c_e h_e^{1/2} |S |_{H^1(w_e)},
\end{equation}
where $c_e>0$ is aconstant independent of $h$.
\item For the velocity: We introduce a variant of $R_h$ denoted by $\mathcal{F}_h$ {that is stable over $L^p(\Omega)^d$ for all $p \geq 1$ (see Apprendix in \cite{GiLi})} and satisfying \eqref{1.2}.
%
%
\item For the pressure: Let $r_h$ be a Cl\'ement-type interpolation operator \cite{s3}. 
We have the following error estimate: for all $\kappa$ in  $\mathcal{T}_h$, for all $e$ in $\partial \kappa$ and for all $ q \in W^{1,3/2}(\Omega)$,
\begin{equation}\label{estom}
\| q - r_h q \|_{L^{3/2}(\kappa)} \le c_\kappa h_\kappa |q |_{W^{1,3/2}(w_\kappa)}
\end{equation}
and
\begin{equation}\label{estgam1}
\| q - r_h q \|_{L^{3/2}(e)} \le c_e h_e^{1/3} |q |_{W^{1,3/2}(w_e)},
\end{equation}
where $c_e>0$ and $c_\kappa>0$ are constants independent of $h$.
\end{enumerate}
%
%
%
%
%

 We recall the following theorem of {\it a priori} error estimates \cite{semaan}:
\begin{thm}\label{thmprioriconv} Under Assumption \ref{hypdata}, let $(\u_h,p_h,C_h)$  be a solution of problem $(V_{ah})$, and  $(\u,p,C)$  be a solution of problem $(V_{ah})$. If $(\u,p,C)$ are such that $ C \in H^2(\Om)$, $\u \in W^{1,3}(\Omega)^d$ and $p\in H^2(\Om)$, and satisfies the following condition:
\begin{equation}\label{priorie}
\ds S_6^0 |C|_{W^{1,3}(\Omega)} + \norm{C}_{L^\infty(\Omega)} \le \ds \frac{\alpha \mu K_m}{2\sqrt{2} \rho c_{\f_1} S_2^0},
\end{equation}
then, we have the following {\it a priori} error estimates:
\begin{equation}\label{estpriori1}
|C-C_h|_{H^1(\Omega)}  + \norm{\u - \u_h}_{L^2(\Omega)^d} + \norm{\nabla(p-p_h)}_{L^{\frac{3}{2}}(\Omega)^d} \le c_1 h
\end{equation}
and
\begin{equation}\label{estpriori2}
 \norm{\u - \u_h}_{L^3(\Omega)^d}  \le c_2 h^{2/3},
\end{equation}
where $c_1$ and $c_2$ are strictly positive constants independent of $h$.
\end{thm}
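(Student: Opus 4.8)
The plan is to combine a monotone-operator energy estimate for the velocity-pressure pair with a coercivity estimate for the concentration, and then to decouple the two by means of the smallness hypothesis \eqref{priorie}. I introduce the stable interpolants $\mathcal{F}_h\u$, $r_h p$, $R_hC$ of Section~\ref{sec:discretisation} and split each error as exact-minus-interpolant plus interpolant-minus-discrete; the interpolation parts are $O(h)$ by \eqref{1.2} under the regularity $C\in H^2(\Om)$, $\u\in W^{1,3}(\Om)^d$, $p\in H^2(\Om)$. Since $\u_h$ is discretely solenoidal, I first correct $\mathcal{F}_h\u$ by a divergence adjustment, afforded by the inf-sup condition \eqref{infsuph1}, into $\widetilde{\mathcal{F}}_h\u$ satisfying the same discrete constraint as $\u_h$; then $\zeta_h:=\widetilde{\mathcal{F}}_h\u-\u_h\in V_h$ and the pressure drops out of the first equation. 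Subtracting $(V_{ah})$ from $(V_a)$ and testing with $\zeta_h$ and with $\chi_h:=R_hC-C_h\in Y_h$ yields the two error identities to be estimated.

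For the velocity I use the monotonicity of $\mathcal{A}$ (third item of the Property): the quadratic branch gives $\tfrac{\mu}{\rho}K_m\|\zeta_h\|_{L^2(\Om)^d}^2\le\int_\Om(\mathcal{A}(\widetilde{\mathcal F}_h\u)-\mathcal{A}(\u_h))\cdot\zeta_h\,d\x$, whose right-hand side expands through the momentum error equation into an interpolation contribution, controlled by the Lipschitz bound \eqref{deuxestimA}, and the source term $\int_\Om(\f(C)-\f(C_h))\cdot\zeta_h\,d\x$. By the decomposition \eqref{formf} the $\f_0$-part cancels, and the Lipschitz property of $\f_1$ together with Poincar\'e's inequality \eqref{pcc} gives $\int_\Om(\f(C)-\f(C_h))\cdot\zeta_h\,d\x\le c_{\f_1}S_2^0\,|C-C_h|_{H^1(\Om)}\,\|\zeta_h\|_{L^2(\Om)^d}$. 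A Young inequality then produces the key bound $\|\u-\u_h\|_{L^2(\Om)^d}\lesssim \tfrac{\rho c_{\f_1}S_2^0}{\mu K_m}\,|C-C_h|_{H^1(\Om)}+O(h)$.

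For the concentration I test the third error equation with $\chi_h$ and use the coercivity $\alpha|\chi_h|_{H^1(\Om)}^2+r_0\|\chi_h\|_{L^2(\Om)}^2$ of the diffusion-reaction part. The crucial point is the convection term: because $\div\u=0$ and the discrete form is stabilized by $\tfrac12\int_\Om\div(\u_h)C_hS_h\,d\x$, the trilinear form $b_h(\cdot;\cdot,\cdot)$ is skew-symmetric on $Y_h$, so its diagonal on $\chi_h$ vanishes and the convection residual collapses to $b_h(\u-\u_h;C,\chi_h)$ plus an $O(h)$ interpolation term. Bounding $\int_\Om((\u-\u_h)\cdot\nabla C)\chi_h\,d\x$ by H\"older ($L^2\!\cdot\!L^3\!\cdot\!L^6$) produces the factor $S_6^0|C|_{W^{1,3}(\Om)}$, while integrating the piece $\tfrac12\int_\Om\div(\u-\u_h)\,C\chi_h\,d\x$ by parts (the boundary term vanishing since $\chi_h\in H^1_0(\Om)$) produces the factor $\|C\|_{L^\infty(\Om)}$. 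Inserting the velocity bound gives
\[
\alpha|C-C_h|_{H^1(\Om)}^2\lesssim O(h)\,|C-C_h|_{H^1(\Om)}+c\,\frac{\rho c_{\f_1}S_2^0}{\mu K_m}\big(S_6^0|C|_{W^{1,3}(\Om)}+\|C\|_{L^\infty(\Om)}\big)|C-C_h|_{H^1(\Om)}^2 ,
\]
and hypothesis \eqref{priorie} is exactly the smallness needed to keep the last coefficient below $\alpha$ and absorb it. This yields $|C-C_h|_{H^1(\Om)}\lesssim h$ and hence $\|\u-\u_h\|_{L^2(\Om)^d}\lesssim h$; applying \eqref{infsuph1} to the momentum residual and bounding $\mathcal{A}(\u)-\mathcal{A}(\u_h)$ through \eqref{deuxestimA} then controls $\|\nabla(p-p_h)\|_{L^{3/2}(\Om)^d}\lesssim h$, establishing \eqref{estpriori1}.

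Finally, \eqref{estpriori2} comes from the cubic branch of the monotonicity, $c_m\|\zeta_h\|_{L^3(\Om)^d}^3\le\int_\Om(\mathcal{A}(\widetilde{\mathcal F}_h\u)-\mathcal{A}(\u_h))\cdot\zeta_h\,d\x$. The subtlety is that a crude estimate of the Forchheimer contribution only yields $h^{1/2}$; to reach the sharper exponent I bound the whole right-hand side by $O(h^2)$, choosing H\"older exponents $(6,3,2)$ so that the already-established $O(h)$ bound on $\|\zeta_h\|_{L^2(\Om)^d}$ is used together with the embedding $\u\in L^6(\Om)^d$ and the $O(h)$ interpolation error $\|\u-\widetilde{\mathcal F}_h\u\|_{L^3(\Om)^d}$; the source term is likewise $O(h^2)$ since $\|C-C_h\|_{L^2(\Om)}\lesssim h$. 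Taking the cube root gives $\|\zeta_h\|_{L^3(\Om)^d}\lesssim h^{2/3}$, and the triangle inequality yields \eqref{estpriori2}. I expect the main obstacle to be the convection analysis of the concentration equation, namely tracking the skew-symmetry and the integration by parts so that the constants assemble into precisely the combination $S_6^0|C|_{W^{1,3}(\Om)}+\|C\|_{L^\infty(\Om)}$ of \eqref{priorie}, together with the careful H\"older bookkeeping that upgrades the nonlinear $L^3$ velocity rate from $h^{1/2}$ to the claimed $h^{2/3}$.
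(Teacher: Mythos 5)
First, a point of comparison: this paper never proves Theorem \ref{thmprioriconv}. It is recalled verbatim from the companion work \cite{semaan} (and note, in passing, that the statement's second ``$(V_{ah})$'' should read ``$(V_a)$''), so there is no in-paper proof to measure your argument against; what can be compared is your strategy versus the machinery the paper actually uses. On that score your plan is the canonical one and matches the paper's toolkit exactly: the divergence-corrected interpolant built from the inf-sup condition \eqref{infsuph1}, the two branches of the monotonicity of $\mathcal{A}$ (the $L^2$ branch for \eqref{estpriori1}, the cubic branch with an $O(h^2)$ right-hand side for the $h^{2/3}$ rate in \eqref{estpriori2}), the cancellation of $\f_0$ and the bound $c_{\f_1}S_2^0\,|C-C_h|_{H^1(\Om)}$, the skew-symmetry of the $\tfrac12\div$-stabilized convection form, the H\"older pattern producing precisely $S_6^0|C|_{W^{1,3}(\Om)}+\|C\|_{L^\infty(\Om)}$, and absorption of the coupling via \eqref{priorie}. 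This is the same structure the paper redeploys at the a posteriori level (compare the proof of Theorem \ref{thmupbd1}, relations \eqref{upbd3}, \eqref{tildecut} and \eqref{concent9}), so I regard the route as correct.

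Two points need tightening before the sketch is a proof. First, the pressure does not completely ``drop out'' when you test with $\zeta_h\in V_h$: membership in $V_h$ only annihilates $\ds\int_\Om\nabla q_h\cdot\zeta_h\,d\x$ for \emph{discrete} $q_h$, so the term $\ds\int_\Om\nabla(p-r_hp)\cdot\zeta_h\,d\x$ survives; it is $O(h)\,\|\zeta_h\|_{L^2(\Om)^d}$ thanks to $p\in H^2(\Om)$, hence harmless, but it must be carried, and in the $L^3$ step it is $O(h^2)$ only because you pair it with the already-established $O(h)$ bound on $\|\zeta_h\|_{L^2(\Om)^d}$. Second, your uses of \eqref{deuxestimA} -- for $\ds\int_\Om\big(\mathcal{A}(\widetilde{\mathcal{F}}_h\u)-\mathcal{A}(\u)\big)\cdot\zeta_h\,d\x$ and again in the pressure estimate -- require uniform $L^6$ control of $\widetilde{\mathcal{F}}_h\u$ and of $\u_h$, otherwise the $(6,3,2)$ H\"older split that yields the rate $h$ (rather than $h^{2/3}$) is unavailable. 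For $\widetilde{\mathcal{F}}_h\u$ this follows from the $L^p$-stability of $\mathcal{F}_h$ plus an inverse inequality \eqref{eq:inversin} applied to the inf-sup correction; for $\u_h$, beware that in this paper the bound \eqref{uhl6} is itself deduced \emph{from} Theorem \ref{thmprioriconv}, so you must order the steps to avoid circularity: first the velocity $L^2$ and concentration $H^1$ rates (which need only an $L^3$ stability bound on $\u_h$, available by testing the discrete momentum equation with $\u_h$), then the $L^6$ bound on $\u_h$ by the triangle-plus-inverse-inequality argument under \eqref{meshunifregular}, and only then the pressure and $L^3$ estimates. With these two repairs your argument goes through.
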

The following proposition gives a bound for the discrete velocity in $L^6(\Omega)^d$ which will be used in Section \ref{sec:posteriori}.
\begin{prop} Under the assumptions of Theorem \ref{thmprioriconv}, we have the following bound:
\begin{equation} \label{uhl6}
\left\|\u_h\right\|_{L^6(\Omega)^d} \leq \hat{c}(\u,p,C)
\end{equation}
where $\hat{c}(\u,p,C)$ is a positive constant independent of $h$.
\end{prop}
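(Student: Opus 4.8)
The plan is to compare $\u_h$ with a stable interpolant of the exact velocity and to buy the extra integrability through an inverse inequality, paying for it with the first-order convergence rate furnished by Theorem \ref{thmprioriconv}. Using the $L^p$-stable operator $\mathcal{F}_h$ introduced in Section \ref{sec:discretisation}, I would start from the triangle inequality
\[
\|\u_h\|_{L^6(\Omega)^d} \le \|\u_h - \mathcal{F}_h\u\|_{L^6(\Omega)^d} + \|\mathcal{F}_h\u\|_{L^6(\Omega)^d}.
\]
Since $\u\in W^{1,3}(\Omega)^d$, the Sobolev embedding $W^{1,3}(\Omega)\hookrightarrow L^6(\Omega)$ (which holds for both $d=2$ and $d=3$) gives $\u\in L^6(\Omega)^d$, and the $L^6$-stability of $\mathcal{F}_h$ yields $\|\mathcal{F}_h\u\|_{L^6(\Omega)^d}\le c\,\|\u\|_{L^6(\Omega)^d}$, a quantity depending only on $\u$ and not on $h$. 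The whole difficulty is therefore concentrated in the discrete remainder $\mathcal{F}_h\u-\u_h$, which lies in $X_h$ and so is piecewise polynomial.

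For that term I would apply the inverse inequality \eqref{eq:inversin} with $p=6$ on each element $\kappa$, raise it to the sixth power and sum over the mesh, which gives
\[
\|\mathcal{F}_h\u-\u_h\|_{L^6(\Omega)^d}^6 \le c_I(6)^6 \sum_{\kappa\in\mathcal{T}_h} h_\kappa^{-2d}\,\|\mathcal{F}_h\u-\u_h\|_{L^2(\kappa)^d}^6 .
\]
Invoking the uniform regularity \eqref{meshunifregular}, $h_\kappa\ge\beta_0 h$, to replace every $h_\kappa$ by $h$, and then the elementary embedding $\ell^2\hookrightarrow\ell^6$ applied to the sequence $\big(\|\mathcal{F}_h\u-\u_h\|_{L^2(\kappa)^d}\big)_\kappa$ (so that $\sum_\kappa a_\kappa^6\le(\sum_\kappa a_\kappa^2)^3$), I would obtain after taking the sixth root the global bound
\[
\|\mathcal{F}_h\u-\u_h\|_{L^6(\Omega)^d} \le c_I(6)\,(\beta_0 h)^{-d/3}\,\|\mathcal{F}_h\u-\u_h\|_{L^2(\Omega)^d}.
\]

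It then remains to control the $L^2$ factor, which I would split once more as $\|\mathcal{F}_h\u-\u_h\|_{L^2(\Omega)^d}\le\|\u-\mathcal{F}_h\u\|_{L^2(\Omega)^d}+\|\u-\u_h\|_{L^2(\Omega)^d}$. The interpolation error is bounded by \eqref{1.2} (with $m=l=0$, $p=2$) as $\|\u-\mathcal{F}_h\u\|_{L^2(\Omega)^d}\le c\,h\,|\u|_{H^1(\Omega)^d}$, using $W^{1,3}\subset H^1$ on the bounded domain, while the second term is exactly the $O(h)$ rate $\|\u-\u_h\|_{L^2(\Omega)^d}\le c_1 h$ provided by \eqref{estpriori1}. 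Hence $\|\mathcal{F}_h\u-\u_h\|_{L^2(\Omega)^d}\le C h$, and substituting back yields $\|\mathcal{F}_h\u-\u_h\|_{L^6(\Omega)^d}\le C'\,h^{\,1-d/3}$. Since $1-\tfrac{d}{3}\ge 0$ for $d\in\{2,3\}$ and $h$ is bounded above, this remainder is uniformly bounded in $h$, and combining with the bound on $\|\mathcal{F}_h\u\|_{L^6(\Omega)^d}$ gives the desired constant $\hat c(\u,p,C)$.

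I expect the main obstacle to be the sharp balance of powers rather than any single estimate: for $d=3$ the inverse inequality contributes the full negative power $h^{-1}$, which is compensated \emph{exactly} by the first-order factor $h$, so the argument is critical and cannot tolerate any weakening of the $L^2$ convergence rate of Theorem \ref{thmprioriconv}. The secondary technical point is to assemble the local inverse inequalities into a clean global $L^6$ bound, for which the uniform regularity \eqref{meshunifregular} and the $\ell^2\hookrightarrow\ell^6$ summation are both indispensable.
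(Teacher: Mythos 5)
Your proof is correct and follows essentially the same route as the paper's: the triangle inequality through the stable interpolant $\mathcal{F}_h\u$, the $L^6$-stability of $\mathcal{F}_h$, the inverse inequality combined with uniform mesh regularity to pass from $L^6$ to $L^2$ at the cost of $h^{-d/3}$, and finally the interpolation estimate \eqref{1.2} together with the \emph{a priori} bound \eqref{estpriori1} to gain the compensating factor $h$. The only difference is that you spell out the element-by-element assembly of the inverse inequality and the exact balance $h^{1-d/3}$, details the paper leaves implicit.
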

\begin{proof}
As $\u \in W^{1,3}(\Omega)^d \subset L^6(\Omega)^d$, and using a triangle inequality, we start from the following bound
\begin{equation*}
    \left\|\u_h\right\|_{L^6(\Omega)^d}  \leq \left\|\u_h-\mathcal{F}_hu\right\|_{L^6(\Omega)^d}+ \left\|\mathcal{F}_hu\right\|_{L^6(\Omega)^d}.
\end{equation*}
Using the fact that the operator $\mathcal{F}_h$ is stable over $L^6(\Omega)^d$, and that the mesh is uniformly regular, we get the bound
\begin{equation}
    \begin{aligned}
     \left\|\u_h\right\|_{L^6(\Omega)^d} & \leq
     ch^{-d/3}\left\|\u_h-\mathcal{F}_hu\right\|_{L^2(\Omega)^d}+ c\left\|\u\right\|_{L^6(\Omega)^d} \\
     &\leq
     ch^{-d/3}(\left\|\u_h-\u\right\|_{L^2(\Omega)^d}+\left\|\u-\mathcal{F}_hu\right\|_{L^2(\Omega)^d})+c\left\|\u\right\|_{L^6(\Omega)^d}.
    \end{aligned}
\end{equation}
Finally, using the \emph{a priori} error estimates, the properties of the operator $\mathcal{F}_h$, we get the desired result.

$\hfill\Box$

\end{proof}
\subsection{Successive approximations}
As the problem is nonlinear, we introduce a straightforward successive approximation algorithm (see \cite{semaan}) which converges to the discrete solution under suitable conditions. The algorithm proceeds as follows: let $\u^h_0\in X_h$ and $C^0_h\in Y_0$ the initial guesses. Having  $(\u_h^{i}, C_h^{i}) \in X_h \times Y_h$ at each iteration $i$, we compute $(\u_h^{i+1}, p_h^{i+1}, C_h^{i+1}) \in X_h \times M_h \times Y_h$, such that
\begin{equation}
(V_{ahi})
\left\{
\begin{array}{ll}
\medskip
\forall \v_h \in X_{h}, \quad \ds \gamma \int_\Om (\u_h^{i+1}-\u_h^i)\cdot \v_h d\x +  \frac{\mu}{\rho}\int_{\Omega}(K^{-1}\u_h^{i+1})\cdot \v_h\,d\x + \frac{\beta}{\rho}\int_{\Omega}|\u_h^i| \u_h^{i+1}\cdot\v_h\,d\x \\
\medskip
\hspace{8cm} \ds  + \int_\Omega \nabla p_h^{i+1} \cdot \v_h \, d\x = \ds \int_\Omega \f(C_h^i) \cdot \v_h \, d\x,\\
\medskip
\forall q_h\in M_{h}, \quad \ds \int_\Omega \nabla q_h \cdot \u_h^{i+1} \, d\x = 0,\\
\ds \forall S_h \in Y_h, \; \alpha \int_{\Omega} \nabla C_h^{i+1}\cdot\nabla S_h\,d\x + \int_{\Omega}(\u_h^{i+1}\cdot \nabla C_h^{i+1})S_h\,d\x
+  \frac{1}{2} \int_\Omega \div(\u_h^{i+1}) \, C_h^{i+1} S_h d\x\\
\hspace{8cm} \ds  + r_0\int_{\Omega}C_h^{i+1} S_h\,d\x = \ds \int_{\Omega}g S_h\,d\x,
\end{array}
\right.
\end{equation}
where $\gamma$ is a real strictly positive parameter. Later on, the parameter $\gamma$ will be chosen to ensure the convergence of algorithm $(V_{ahi})$. At each iteration $i$, having $\u_h^i$ and $C_h^i$, the first two lines of $(V_{ahi})$ computes $(\u_h^{i+1},p_h^{i+1})$. Next, we substitute $\u_h^{i+1}$ by its value in the third equation of $(V_{ahi})$ to compute $C_h^{i+1}$. \\
For the existence and uniqueness of solutions of problem $(V_{ahi})$, we recall the following two Theorems \cite{semaan} with few modifications that give the parameter $\gamma$ explicitly. The main ideas in the proof are exactly the same but for the reader convenience, we detailed some steps.
\begin{thm}\label{thm41}
In addition to assumption \ref{hypdata}, we suppose that $\f_0 \in L^2(\Omega)^d$. For each $(\u_h^i,C_h^i)\in X_h \times Y_h$, problem $(V_{ahi})$ admits a unique solution $(\u_h^{i+1}, p_h^{i+1}, C_h^{i+1}) \in X_h \times M_h \times Y_h$. Moreover, we have the following bound
\begin{equation}\label{boundC}
|C_h^{i+1}|_{1,\Omega} \leq \frac{S_2^0}{\alpha} \left\|g\right\|_{L^2(\Omega)}.
\end{equation}
Furthermore, if the initial value $\u_h^0$ satisfies the condition
\begin{equation} \label{uhi0} \left\|\u_h^0\right\|_{L^2(\Omega)^d} \leq L_1(\f,g),\end{equation}
where
\[ L_1(\f,g)=\frac{\rho}{\mu K_m}(\left\|\f_0\right\|_{L^2(\Omega)^d} + c_{\f_1}\frac{(S_2^0)^2}{\alpha}\left\|g\right\|_{L^2(\Omega)}), \]
and if $\gamma\geq \gamma_{*}$ with
\begin{equation} \begin{aligned}\label{condh1}
\gamma_{*}=\frac{32 \beta}{27 \rho}&c_I^3h^{-d/2} \left(\frac{\rho}{\mu K_m}+\frac{\rho K_M}{\mu K_m^2}\right)
\left(\left\|\f_0\right\|_{L^2(\Omega)^d} + c_{\f_1}\frac{(S_2^0)^2}{\alpha}\left\|g\right\|_{L^2(\Omega)}\right)\\
&+ \frac{32 \beta^2}{27 \rho \mu^3 K_m^3}c_I^{6}h^{-d}
\left(\left\|\f_0\right\|_{L^2(\Omega)^d} + c_{\f_1}\frac{(S_2^0)^2}{\alpha}\left\|g\right\|_{L^2(\Omega)}\right)^2,
\end{aligned}
\end{equation}
then, the following inequalities hold
\begin{equation} \label{estimuhi2} \left\|\u_h^{i+1}\right\|_{L^2(\Omega)^d} \leq L_1(\f,g), \end{equation}
and
\begin{equation} \label{estimuhi3}
\left\|\u_h^{i+1}\right\|_{L^3(\Omega)^d}^3\leq \frac{\mu K_m}{\beta}L_1^2(\f,g).
\end{equation}
\end{thm}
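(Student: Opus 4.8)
The plan is to handle the three equations of $(V_{ahi})$ in the order in which they are solved, and then to prove the two stability bounds by energy arguments built on testing the velocity equation with $\u_h^{i+1}$ itself.

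\textbf{Existence and uniqueness.} Once $(\u_h^i,C_h^i)$ is frozen, the first two lines of $(V_{ahi})$ form a \emph{linear} saddle-point problem for $(\u_h^{i+1},p_h^{i+1})$. On $X_h$ the bilinear form $(\u,\v)\mapsto \gamma\int_\Om \u\cdot\v + \frac{\mu}{\rho}\int_\Om K^{-1}\u\cdot\v + \frac{\beta}{\rho}\int_\Om|\u_h^i|\,\u\cdot\v$ is continuous and, by \eqref{KmM} and the nonnegativity of the weight $|\u_h^i|$, coercive: testing with $\u$ gives the lower bound $\big(\gamma+\tfrac{\mu K_m}{\rho}\big)\left\|\u\right\|_{L^2(\Om)^d}^2$. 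Together with the inf-sup condition \eqref{infsuph1} for the divergence constraint, the Babu\v{s}ka--Brezzi theory yields a unique $(\u_h^{i+1},p_h^{i+1})\in X_h\times M_h$. With $\u_h^{i+1}$ fixed, the third line is a linear problem for $C_h^{i+1}$ whose bilinear form is coercive on $Y_h$ thanks to the skew-symmetrized convection: taking $S_h=C_h^{i+1}$ one gets $\int_\Om(\u_h^{i+1}\cdot\nabla C_h^{i+1})C_h^{i+1}+\tfrac12\int_\Om\div(\u_h^{i+1})\,(C_h^{i+1})^2=0$, so only $\alpha|C_h^{i+1}|_{H^1(\Om)}^2+r_0\left\|C_h^{i+1}\right\|_{L^2(\Om)}^2$ remains, and Lax--Milgram gives a unique $C_h^{i+1}\in Y_h$.

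\textbf{Bound \eqref{boundC}.} The same cancellation is exactly the route to \eqref{boundC}: testing the concentration equation with $S_h=C_h^{i+1}$, cancelling the convective terms, discarding the nonnegative reaction term, and estimating the right-hand side by Cauchy--Schwarz and Poincar\'e \eqref{pcc} yields $\alpha|C_h^{i+1}|_{H^1(\Om)}^2\le \left\|g\right\|_{L^2(\Om)}S_2^0\,|C_h^{i+1}|_{H^1(\Om)}$, whence the claim (and likewise for $C_h^i$).

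\textbf{Bound \eqref{estimuhi2}.} I would argue by induction, assuming $\left\|\u_h^i\right\|_{L^2(\Om)^d}\le L_1(\f,g)$. Testing the first line with $\v_h=\u_h^{i+1}\in V_h$ annihilates the pressure term (take $q_h=p_h^{i+1}$ in the second line). Using the pointwise identity $2(\u_h^{i+1}-\u_h^i)\cdot\u_h^{i+1}=|\u_h^{i+1}|^2-|\u_h^i|^2+|\u_h^{i+1}-\u_h^i|^2$, the coercivity of $K^{-1}$, and the nonnegativity of the Forchheimer weight bounds the left-hand side from below; for the right-hand side, \eqref{formf}, the bound $|\f_1(\xi)|\le c_{\f_1}|\xi|$ and \eqref{boundC} give $\int_\Om\f(C_h^i)\cdot\u_h^{i+1}\le\big(\left\|\f_0\right\|_{L^2(\Om)^d}+c_{\f_1}\tfrac{(S_2^0)^2}{\alpha}\left\|g\right\|_{L^2(\Om)}\big)\left\|\u_h^{i+1}\right\|_{L^2(\Om)^d}=\tfrac{\mu K_m}{\rho}L_1\left\|\u_h^{i+1}\right\|_{L^2(\Om)^d}$. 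Discarding the nonnegative difference and Forchheimer terms, a short absorption argument (were $\left\|\u_h^{i+1}\right\|_{L^2(\Om)^d}>L_1$, the $\gamma$-difference would be positive and the $K_m$-term would force a contradiction) closes the induction.

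\textbf{Bound \eqref{estimuhi3} --- the main obstacle.} Here the linearization $|\u_h^i|\u_h^{i+1}$ of the true term $|\u_h^{i+1}|\u_h^{i+1}$ must be accounted for. Starting again from the energy relation and writing $\tfrac{\beta}{\rho}\left\|\u_h^{i+1}\right\|_{L^3(\Om)^d}^3=\tfrac{\beta}{\rho}\int_\Om|\u_h^i|\,|\u_h^{i+1}|^2+\tfrac{\beta}{\rho}\int_\Om\big(|\u_h^{i+1}|-|\u_h^i|\big)|\u_h^{i+1}|^2$, the first term is controlled through the energy identity and \eqref{estimuhi2}, while the linearization error is estimated using $\big||\u_h^{i+1}|-|\u_h^i|\big|\le|\u_h^{i+1}-\u_h^i|$, H\"older's inequality, and the inverse inequalities \eqref{eq:inversin}, which trade powers of $\u_h^{i+1}$ for negative powers of $h$ times $L^2$-norms already bounded by $L_1$. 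The decisive step is to absorb this error into the stabilization quantity $\gamma\left\|\u_h^{i+1}-\u_h^i\right\|_{L^2(\Om)^d}^2$ supplied by the $\gamma$-difference, via Young's inequality. The threshold $\gamma_*$ is precisely the value for which this absorption succeeds while the total stays below $\tfrac{\mu K_m}{\rho}L_1^2$; its two summands --- one of order $h^{-d/2}$ and linear in the data, one of order $h^{-d}$ and quadratic --- reflect the two parts of the Lipschitz estimate \eqref{deuxestimA}, namely the viscous $\tfrac{\mu}{\rho}\left\|K^{-1}\right\|_{\infty}$ part (carrying $K_M$) and the quadratic Forchheimer part, while the constant $\tfrac{32}{27}$ comes from optimizing the Young splitting. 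I expect this calibration of $\gamma_*$ --- balancing the inverse-inequality constants against the two components of the nonlinearity --- to be the only genuinely delicate point; everything else is routine.
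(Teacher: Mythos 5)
Your overall strategy is sound and, in large part, parallels the paper's proof: both arguments rest on the energy identity obtained by testing the first equation of $(V_{ahi})$ with $\v_h=\u_h^{i+1}$ (the pressure term killed by taking $q_h=p_h^{i+1}$ in the second equation), on the identity $2(\u_h^{i+1}-\u_h^i)\cdot\u_h^{i+1}=|\u_h^{i+1}|^2-|\u_h^i|^2+|\u_h^{i+1}-\u_h^i|^2$, on induction over $i$, and on absorbing the linearization error $\frac{\beta}{\rho}\int_\Om\big(|\u_h^{i+1}|-|\u_h^i|\big)|\u_h^{i+1}|^2\,d\x$ into the stabilization term $\frac{\gamma}{2}\|\u_h^{i+1}-\u_h^i\|^2_{L^2(\Om)^d}$ through a Young splitting (the source of the $16/27$, hence $32/27$, constants) combined with the inverse inequality \eqref{eq:inversin}. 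Your existence/uniqueness and \eqref{boundC} arguments are the standard ones (the paper simply cites \cite{semaan} for them). Where you genuinely diverge --- and in fact improve on the paper --- is \eqref{estimuhi2}: you keep the linearized Forchheimer term $\frac{\beta}{\rho}\int_\Om|\u_h^i|\,|\u_h^{i+1}|^2\,d\x$ as a nonnegative quantity and discard it, so the $L^2$ bound follows from the induction hypothesis and the sign of the $\gamma$-difference alone, with \emph{no} condition on $\gamma$, no inverse inequality, and no bound on the increment. The paper instead rewrites this term as $\frac{\beta}{\rho}\|\u_h^{i+1}\|^3_{L^3(\Om)^d}$ minus the linearization error from the outset, and consequently needs $\gamma\ge\gamma_*$ already to close the induction for \eqref{estimuhi2}. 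Your route is cleaner, gives unconditional $L^2$ stability, and makes $\|\u_h^{i+1}-\u_h^i\|_{L^2(\Om)^d}\le 2L_1(\f,g)$ available when you turn to \eqref{estimuhi3}.

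Two caveats. First, with the increment bounded by $2L_1(\f,g)$, your absorption needs only $\frac{\gamma}{2}\ge\frac{32\beta}{27\rho}c_I^3h^{-d/2}L_1(\f,g)$, which is a different (smaller) threshold than $\gamma_*$; to conclude under the stated hypothesis $\gamma\ge\gamma_*$ you must check that $\gamma_*$ dominates it, which holds because $K_m\le K_M$ makes the first summand of \eqref{condh1} already at least $\frac{64\beta}{27\rho}c_I^3h^{-d/2}\frac{\rho}{\mu K_m}\big(\|\f_0\|_{L^2(\Om)^d}+c_{\f_1}\frac{(S_2^0)^2}{\alpha}\|g\|_{L^2(\Om)}\big)$. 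Relatedly, your claim that $\gamma_*$ is ``precisely'' the calibration constant of your absorption, with its two summands traced to \eqref{deuxestimA}, is not accurate: in the paper the two summands arise by evaluating at $\eta=L_1$ the auxiliary increment bound $\|\u_h^{i+1}-\u_h^i\|_{L^2(\Om)^d}\le L_2(\f,g,\|\u_h^i\|_{L^2(\Om)^d})$, which is obtained by testing the velocity equation with $\v_h=\u_h^{i+1}-\u_h^i$ --- a step your proof never performs and, on your route, does not need. Second, note that passing from the absorbed energy inequality to \eqref{estimuhi3} is immediate only when $\|\u_h^{i+1}\|_{L^2(\Om)^d}\ge\|\u_h^i\|_{L^2(\Om)^d}$; otherwise the term $\frac{\gamma}{2}\big(\|\u_h^i\|^2_{L^2(\Om)^d}-\|\u_h^{i+1}\|^2_{L^2(\Om)^d}\big)$ sits on the right-hand side and may be of order $\gamma\sim h^{-d}$. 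Your phrase ``while the total stays below $\frac{\mu K_m}{\rho}L_1^2$'' glosses this, but so does the paper's own ``simple consequence of \eqref{rel412} and \eqref{estimuhi2}'', so this is a gap you share with, rather than add to, the published argument.
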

\begin{proof}
For the existence, uniqueness of solutions to problem $(V_{ahi})$, and the bound \eqref{boundC},  we refer to \cite{semaan}.
Now, we focus on the remaining part of the proof,
using Theorem 4.1 in \cite{semaan}, we recall the following inequality
\begin{equation}
\begin{array}{ll}
\ds \gamma \left\|\u_h^{i+1}-\u_h^i\right\|_{L^2(\Omega)^d}^2 + \frac{\mu K_m}{\rho}\left\|\u_h^{i+1}-\u_h^i\right\|_{L^2(\Omega)^d}^2  \leq \left\|\f(C_h^i)\right\|_{L^2(\Omega)^d} \left\|\u_h^{i+1}-\u_h^i\right\|_{L^2(\Omega)^d} \\
\hspace{2cm} \ds + \frac{\mu K_M}{\rho} \left\|\u_h^{i+1}-\u_h^i\right\|_{L^2(\Omega)^d}\left\|\u_h^i\right\|_{L^2(\Omega)^d} + \frac{\beta}{\rho}C_I^3h^{-d/2}\left\|\u_h^i\right\|_{L^2(\Omega)^d}^2\left\|\u_h^{i+1}-\u_h^i\right\|_{L^2(\Omega)^d}. \end{array}
\end{equation}
We simplify by $\left\|\u_h^{i+1}-\u_h^i\right\|_{L^2(\Omega)^d}$ to obtain:
\[
\begin{array}{ll}
\medskip
\ds (\gamma+\frac{\mu K_m}{\rho}) \| \u_{h}^{i+1} -\u_{h}^i\|_{L^2(\Omega)^d}   \le
\ds \| \f(C_h^i) \|_{L^2(\Omega)^d}
+  \frac{\mu K_M}{\rho}  \|\u_{h}^i \|_{L^2(\Omega)^d}  +
\frac{\beta}{\rho} {  c_I^3 }  h^{-\frac{d}{2}}   \|\u_{h}^i\|^2_{L^2(\Omega)^d}.
\end{array}
\]
Using the properties of $\f$ and the bound of the concentration \eqref{boundC}, we get the following estimate:
\begin{equation}\label{bound11}
\ds  \| \u_{h}^{i+1} -\u_{h}^i\|_{L^2(\Omega)^d} \le L_2(\f,g, \| \u_h^i \|_{L^2(\Omega)^d}),
\end{equation}
where
$$ L_2(\f,g, \eta) = \ds \frac{\rho }{\mu K_m}  \Big(\| \f_0 \|_{L^2(\Omega)^d}+c_{\f_1}\frac{(S_2^0)^2}{\alpha}\left\|g\right\|_{L^2(\Omega)}
+  \frac{\mu K_M}{\rho}  \eta +
\frac{\beta}{\rho} {  c_I^3 }  h^{-\frac{d}{2}}\eta^2  \Big), \;\; \eta\in \mathbb R_+. $$
Then, we are now in position to show relation \eqref{estimuhi2}. We consider the first equation of problem $(V_{ahi})$ with $\v_h=\u_h^{i+1}$, and  obtain
\begin{equation} \begin{aligned}
\gamma \int_{\Omega}(\u_h^{i+1}-\u_h^i)\cdot\u_h^{i+1}\,d\x + \frac{\mu}{\rho}\int_{\Omega}K^{-1}\u_h^{i+1}\cdot\u_h^{i+1}\,d\x + \frac{\beta}{\rho}\left\|\u_h^{i+1}\right\|_{L^3(\Omega)^d}^3 &= \int_{\Omega}\f(C_h^i)\cdot\u_{h}^{i+1}\,d\x \\
& + \frac{\beta}{\rho}\int_{\Omega}(|\u_h^{i+1}|-|\u_h^i|) |\u_h^{i+1}|^2\,d\x.
\end{aligned}
\end{equation}
Using the properties of $K^{-1}$, the Cauchy-Shwartz inequality and the relations $\ds ab \leq \frac{1}{2\varepsilon}a^2 + \frac{\varepsilon}{2}b^2$ and $\ds a^2b \leq \frac{1}{3}(\frac{1}{\delta^3}b^3+2\delta^{\frac{3}{2}}a^3)$ wih $ \ds \varepsilon = \frac{\mu K_m}{\rho} $ and $\ds \delta = (\frac{3\beta}{4\rho})^{2/3}$, we get
\begin{equation} \begin{aligned}
& \frac{\gamma}{2}\left\|\u_h^{i+1}\right\|_{L^2(\Omega)^d}^2 - \frac{\gamma}{2}\left\|\u_h^i\right\|_{L^2(\Omega)^d}^2 + \frac{\gamma}{2}\left\|\u_h^{i+1}-\u_h^i\right\|_{L^2(\Omega)^d}^2 + \frac{\mu K_m}{2\rho} \left\|\u_h^{i+1}\right\|_{L^2(\Omega)^d}^2 + \frac{\beta}{2\rho} \left\|\u_{h}^{i+1}\right\|_{L^3(\Omega)^d}^3 \\ &\hspace{4cm} \leq \frac{\rho}{2\mu K_m}\left\|\f(C_h^i)\right\|_{L^2(\Omega)^d}^2 + \frac{16 \beta}{27\rho} c_I^3h^{-d/2}\left\|\u_{h}^{i+1}-\u_{h}^i\right\|_{L^2(\Omega)^d}^3.
\end{aligned} \end{equation}
We denote by $$ \ds c_1(\left\|\u_h^i\right\|_{L^2(\Omega)^d}) = \frac{\gamma}{2}-\frac{16\beta}{27\rho}c_I^3 h^{-d/2}L_2(\f,g,\left\|\u_h^i\right\|_{L^2(\Omega)^d})$$
which is not necessarily positive at this level.
Therefore, by using the bound \eqref{bound11}, we obtain the following bound:
\begin{equation}\begin{aligned} \label{rel412}
 \frac{\gamma}{2}\left\|\u_h^{i+1}\right\|_{L^2(\Omega)^d}^2 &- \frac{\gamma}{2}\left\|\u_h^i\right\|_{L^2(\Omega)^d}^2 + c_1(\left\|\u_h^i\right\|_{L^2(\Omega)^d})\left\|\u_h^{i+1}-\u_h^i\right\|_{L^2(\Omega)^d}^2 + \frac{\mu K_m}{2\rho} \left\|\u_h^{i+1}\right\|_{L^2(\Omega)^d}^2 \\ & + \frac{\beta}{2\rho} \left\|\u_{h}^{i+1}\right\|_{L^3(\Omega)^d}^3  \leq  \frac{\mu K_m}{2 \rho}L_1^2(\f,g).
\end{aligned} \end{equation}
We now prove estimate \eqref{estimuhi2} by induction on $i \geq 1$ under some condition on $\gamma$ that we will determine. Starting with relation \eqref{uhi0}, we suppose that we have
\begin{equation} \label{induct} \left\|\u_h^i\right\|_{L^2(\Omega)^d} \leq L_1(\f,g). \end{equation}
We have two situations:
\begin{itemize}
\item $ \ds \left\|\u_h^{i+1}\right\|_{L^2(\Omega)^d} \leq \left\|\u_h^i\right\|_{L^2(\Omega)^d}$, which immediately leads to
\[\left\|\u_h^{i+1}\right\|_{L^2(\Omega)^d} \leq L_1(\f,g).\]
\item $\ds \left\|\u_h^{i+1}\right\|_{L^2(\Omega)^d} \geq \left\|\u_h^i\right\|_{L^2(\Omega)^d}  $. By using the induction condition \eqref{induct} and the fact that the function $L_2$ is increasing with respect to $\eta$, we chose
\begin{equation} \begin{aligned} \label{conditiongam}
\frac{\gamma}{2} & \geq \frac{16\beta}{27\rho}c_I^3h^{-d/2}L_2(\f,g, L_1(\f,g)) \\
&\geq\frac{16\beta}{27\rho}c_I^3h^{-d/2} L_2(\f,g,\left\|\u_h^i\right\|_{L^2(\Omega)^d}),
\end{aligned} \end{equation}
to get $ \ds c_1(\left\|\u_h^i\right\|_{L^2(\Omega)^d})\geq0$, and deduce from relation \eqref{rel412} that
\[ \left\|\u_h^{i+1}\right\|_{L^2(\Omega)^d} \leq L_1(\f,g).\]
\end{itemize}
Then relation \eqref{estimuhi2} holds. The bound \eqref{estimuhi3} is a simple consequence of \eqref{rel412} and \eqref{estimuhi2}.\\
Now we focus on the inequality \eqref{conditiongam}. It is easy to show that $\forall \eta \in \mathbb{R}_{+}$,
\begin{equation}
 \ds L_2(\f,g, \eta) \leq
 \ds \frac{ \rho}{\mu K_m}( \| \f_0 \|_{L^2(\Omega)^d} +\frac{c_{\f_1}(S_2^0)^2}{\alpha}\left\|g\right\|_{L^2(\Omega)})+  \frac{K_M}{K_m} \eta + \frac{\beta}{\rho} c_I^3 h^{-\frac{d}{2}} \frac{\rho}{\mu K_m} \eta^2,
\end{equation}
and then to get by using the definition of $L_1$:
\begin{equation} \begin{aligned} L_2(\f,g,L_1(\f,g)) &\leq (\frac{\rho}{\mu K_m}+\frac{\rho K_M}{\mu K_m^2})\left( \left\|\f_0\right\|_{L^2(\Omega)^d}+c_{\f_1}\frac{(S_2^0)^2}{\alpha}\left\|g\right\|_{L^2(\Omega)}
\right)\\
&+
\frac{\beta\rho^2}{\mu^3 K_m^3}c_I^3h^{-d/2}
\left( \left\|\f_0\right\|_{L^2(\Omega)^d}+c_{\f_1}\frac{(S_2^0)^2}{\alpha}\left\|g\right\|_{L^2(\Omega)}
\right)^2.
\end{aligned} \end{equation}
Relation \eqref{conditiongam} and the last inequality allow us to obtain
\[
\frac{\gamma}{2} - \frac{16\beta}{27\rho}c_I^3h^{-d/2}L_2(\f,g, L_1(\f,g)) \geq \phi(\gamma)
\]
with
\begin{equation} \begin{aligned}
\phi(\gamma)= \frac{\gamma}{2}&-\frac{16 \beta}{27 \rho}c_I^3h^{-d/2} \left(\frac{\rho}{\mu K_m}+\frac{\rho K_M}{\mu K_m^2}\right)
\left(\left\|\f_0\right\|_{L^2(\Omega)^d} + c_{\f_1}\frac{(S_2^0)^2}{\alpha}\left\|g\right\|_{L^2(\Omega)}\right)\\
&- \frac{16 \beta^2}{27 \rho \mu^3 K_m^3}c_I^{6}h^{-d}
\left(\left\|\f_0\right\|_{L^2(\Omega)^d} + c_{\f_1}\frac{(S_2^0)^2}{\alpha}\left\|g\right\|_{L^2(\Omega)}\right)^2.
\end{aligned}
\end{equation}
We remark that $\phi(\gamma)$ is a polynomial of first degree with respect to $\gamma$ with only  root $\gamma_{*}>0$.
Finally, we get $\phi(\gamma)\geq0$ for all $\gamma \geq \gamma_*$.

$\hfill\Box$

\end{proof}

The next theorem shows the convergence of the solution $(\u_h^{i},p_h^i,C_h^i)$
of problem $(V_{ahi})$ to the solution of problem $(V_{ah})$.
\begin{thm} \label{thm42} Under the assumption of Theorem \ref{thm41}, we assume that the concentration solution of the problem $(V_a)$ satisfies
\begin{equation} \label{condsurci}
S_6^0|C|_{W^{1,3}(\Omega)}+\left\|C\right\|_{L^{\infty}(\Omega)}
\leq \frac{\mu K_m \alpha }{2\rho c_{\f_1}S_2^0}.
\end{equation}
 Moreover, if $\gamma$ satisfies the condition
\begin{equation} \label{condsurgamai}
\gamma > \max\{\gamma_{*},\gamma_{**}\},
\end{equation}
where $$
\gamma_{**}=2c_I^4h^{-2d/3}\frac{\rho^{1/3}\beta^{4/3}}{\mu^{5/3}K_m^{5/3}}\left(\left\|\f_0\right\|_{L^2(\Omega)^d}+c_{\f_1}\frac{(S_2^0)^2}{\alpha}\left\|g\right\|_{L^2(\Omega)}
\right)^{4/3}$$ and if
\begin{equation} \label{condsurhi}
h \leq \big(\frac{1}{2c_Ic_1}(|C|_{W^{1,3}(\Omega)}+\frac{\left\|C\right\|_{L^{\infty}(\Omega)}}{S_6^0})\big)^{6/(6-d)},
\end{equation}
where $c_1$ is the constant in \eqref{estpriori1}, then the solution $(\u_h^i,p_h^i, C_h^i)$ of problem $(V_{ahi})$ converges in $L^2(\Omega)^d\times L^2(\Omega) \times H^1(\Omega)$ to the solution of problem $(V_{ah})$.
\begin{proof}
Again, using Theorem 4.2 in \cite{semaan}, we start from the following bound:
\begin{equation} \label{utilepourconvi} |C_h-C_h^{i+1}|_{1,\Omega} \leq
\frac{S_6^0}{2\alpha}\left[ 2c_Ih^{-d/6}|C-C_h|_{1,\Omega} + |C|_{W^{1,3}(\Omega)} +\frac{ \left\|C\right\|_{L^{\infty}(\Omega)}}{S_6^0}\right]\left\|\u_h^{i+1}-\u_h\right\|_{L^2(\Omega)^d}.\end{equation}
Furthermore, by taking the difference between the first equations of problems $(V_{ah})$ and $(V_{ahi})$ with $\v_h=\u_h^{i+1}-\u_h$, we get
\begin{equation} \begin{aligned}
&\frac{\gamma}{2}\left\|\u_h^{i+1}-\u_h\right\|_{L^2(\Omega)^d}^2 - \frac{\gamma}{2}\left\|\u_h^i-\u_h\right\|_{L^2(\Omega)^d}^2 + \frac{\gamma}{2}\left\|\u_h^{i+1}-\u_h\right\|_{L^2(\Omega)^d}^2 + \frac{\mu}{\rho}\int_{\Omega}K^{-1}|\u_h^{i+1}-\u_h|^2\,d\x \\
&
\ds + \frac{\beta}{\rho} \int_\Om (|\u_h^{i}|-|\u_h^{i+1}|)\u_h^{i+1}\cdot(\u_h^{i+1}-\u_h))d\x +  \frac{\beta}{\rho} \int_\Om (|\u_h^{i+1}|\u_h^{i+1}-|\u_h|\u_h)\cdot(\u_h^{i+1}-\u_h)d\x\\
& \ds \hspace{8cm} = \int_{\Omega}(\f(C_h^i)-\f(C_h)).(\u_h^{i+1}-\u_h)\,d\x.
\end{aligned} \end{equation}
By using the monotonicity property of the operator $\mathcal{A}$ we obtain,
\begin{equation} \begin{aligned}
&\frac{\gamma}{2}\left\|\u_h^{i+1}-\u_h\right\|_{L^2(\Omega)^d}^2 - \frac{\gamma}{2}\left\|\u_h^i-\u_h\right\|_{L^2(\Omega)^d}^2 + \frac{\gamma}{2}\left\|\u_h^{i+1}-\u_h^i\right\|_{L^2(\Omega)^d}^2 + \frac{\mu K_m}{\rho} \left\|\u_h^{i+1}-\u_h\right\|_{L^2(\Omega)^d}^2\\
&\leq \frac{\beta}{\rho}\left\|\u_h^{i+1}-\u_h^i\right\|_{L^3(\Omega)^d}\left\|\u_h^{i+1}\right\|_{L^3(\Omega)^d}\left\|\u_h^{i+1}-\u_h\right\|_{L^3(\Omega)^d} + c_{\f_1}S_2^0|C_h^i-C_h|_{1,\Omega}\left\|\u_h^{i+1}-\u_h\right\|_{L^2(\Omega)^d} \\
&  \leq \frac{\beta}{\rho}c_I^2h^{-d/3}(\frac{\mu K_m}{\beta})^{1/3}(L_1(\f,g))^{2/3}\left\|\u_h^{i+1}-\u_h^i\right\|_{L^2(\Omega)^d}\left\|\u_h^{i+1}-\u_h\right\|_{L^2(\Omega)^d}\\
&+ c_{\f_1}S_2^0|C_h^i-C_h|_{1,\Omega}\left\|\u_h^{i+1}-\u_h\right\|_{L^2(\Omega)^d}.
\end{aligned} \end{equation}
We denote by $\ds c_2=\frac{\beta}{\rho}(\frac{\mu K_m}{\beta})^{1/3}c_I^2 L_1^{2/3}(\f,g)$ and we use the relation $\ds ab \leq \frac{1}{2\varepsilon}a^2+\frac{\varepsilon}{2}b^2$ with $\ds \varepsilon=\frac{\mu K_m}{2\rho}$, we get
\begin{equation} \begin{aligned}
&\frac{\gamma}{2}\left\|\u_h^{i+1}-\u_h\right\|_{L^2(\Omega)^d}^2 - \frac{\gamma}{2}\left\|\u_h^i-\u_h\right\|_{L^2(\Omega)^d}^2 + \frac{\gamma}{2}\left\|\u_h^{i+1}-\u_h^i\right\|_{L^2(\Omega)^d}^2 + \frac{\mu K_m}{2\rho} \left\|\u_h^{i+1}-\u_h\right\|_{L^2(\Omega)^d}^2\\
& \hspace{2cm} \leq \frac{\rho c_2^2}{\mu K_m}h^{-2d/3}\left\|\u_h^{i+1}-\u_h^i\right\|_{L^2(\Omega)^d}^2 + \frac{\rho (c_{\f_1}S_2^0)^2}{\mu K_m} |C_h^i-C_h|_{1,\Omega}^2.
\end{aligned} \end{equation}
We then choose
\begin{equation} \label{choixdegam}
\ds \frac{\gamma}{2}>\frac{\rho c_2^2}{\mu K_m}h^{-2d/3},
\end{equation}
and  denote by $\ds c_3= \frac{\gamma}{2}-\frac{\rho c_2^2}{\mu K_m}h^{-2d/3} >0$, to conclude that
\begin{equation} \begin{aligned} \label{426}
\frac{\gamma}{2}\left\|\u_h^{i+1}-\u_h\right\|_{L^2(\Omega)^d}^2 - \frac{\gamma}{2}\left\|\u_h^i-\u_h\right\|_{L^2(\Omega)^d}^2 + c_3\left\|\u_h^{i+1}-\u_h^i\right\|_{L^2(\Omega)^d}^2 &+  \frac{\mu K_m}{2\rho} \left\|\u_h^{i+1}-\u_h\right\|_{L^2(\Omega)^d}^2\\
& \leq \frac{\rho (c_{\f_1} S_2^0)^2}{\mu K_m} |C_h^i-C_h|_{1,\Omega}^2.
\end{aligned} \end{equation}
Combining \eqref{426} with \eqref{utilepourconvi} and using the{ \it a priori} error estimate \eqref{estpriori1}, we get
\begin{equation}\label{condint}
\begin{aligned}
\frac{\gamma}{2}\left\|\u_h^{i+1}-\u_h\right\|_{L^2(\Omega)^d}^2 &- \frac{\gamma}{2}\left\|\u_h^i-\u_h\right\|_{L^2(\Omega)^d}^2 + c_3\left\|\u_h^{i+1}-\u_h^i\right\|_{L^2(\Omega)^d}^2 +  \frac{\mu K_m}{2\rho} \left\|\u_h^{i+1}-\u_h\right\|_{L^2(\Omega)^d}^2\\
& \leq \frac{\rho}{\mu K_m} (\frac{c_{\f_1}S_2^0S_6^0}{2\alpha})^2 \left[ 2c_Ic_1h^{(6-d)/6}+|C|_{W^{1,3}(\Omega)}+\frac{\left\|C\right\|_{L^{\infty}(\Omega)}}{S_6^0}\right]^2 \left\|\u_h^i-\u_h\right\|_{L^2(\Omega)^d}^2.
\end{aligned} \end{equation}
Thus, Assumptions \eqref{condsurci} and \eqref{condsurhi} allow us to get
\begin{equation}\label{lastine}
\begin{array}{l}
\medskip
\ds (\frac{\gamma}{2}+\frac{\mu K_m}{4\rho})\big(\left\|\u_h^{i+1}-\u_h\right\|_{L^2(\Omega)^d}^2-
\left\|\u_h^i-\u_h\right\|_{L^2(\Omega)^d}^2\big) + c_3\left\|\u_h^{i+1}-\u_h^i\right\|_{L^2(\Omega)^d}^2  \\
\hspace{8cm} \ds + \frac{\mu K_m}{4\rho}  \left\|\u_h^{i+1}-\u_h\right\|_{L^2(\Omega)^d}^2\leq 0.
\end{array}
\end{equation}
Finally,  \eqref{choixdegam} is clearly satisfied if $\gamma>\gamma_{**}$. The remaining part of the proof is detailed in \cite{semaan}.

$\hfill\Box$

\end{proof}
\end{thm}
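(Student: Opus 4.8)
The plan is to prove convergence through an energy inequality for the velocity error $e^{i}:=\u_h^{i}-\u_h$ in $L^2(\Omega)^d$, showing that $\|e^i\|_{L^2(\Omega)^d}$ is a nonincreasing and square-summable sequence, and then transferring convergence to the concentration and the pressure. First I would subtract the first equation of $(V_{ah})$ from that of $(V_{ahi})$ and test with $\v_h=e^{i+1}=\u_h^{i+1}-\u_h$. The relaxation term $\gamma\int_\Omega(\u_h^{i+1}-\u_h^i)\cdot e^{i+1}\,d\x$ is treated by the polarization identity, yielding $\frac{\gamma}{2}\big(\|e^{i+1}\|_{L^2}^2-\|e^i\|_{L^2}^2+\|\u_h^{i+1}-\u_h^i\|_{L^2}^2\big)$, while the $K^{-1}$ term is bounded below by $\frac{\mu K_m}{\rho}\|e^{i+1}\|_{L^2}^2$ using \eqref{KmM}.

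The delicate part is the Forchheimer nonlinearity, which is lagged in $(V_{ahi})$ (it uses $|\u_h^i|\u_h^{i+1}$ instead of $|\u_h^{i+1}|\u_h^{i+1}$). I would split the difference of the cubic terms into a genuinely monotone piece, bounded from below by the monotonicity property of $\mathcal{A}$, plus a consistency piece $\frac{\beta}{\rho}\int_\Omega(|\u_h^i|-|\u_h^{i+1}|)\u_h^{i+1}\cdot e^{i+1}\,d\x$. This last term is the main obstacle: to absorb it I would estimate it by $\frac{\beta}{\rho}\|\u_h^{i+1}-\u_h^i\|_{L^3}\|\u_h^{i+1}\|_{L^3}\|e^{i+1}\|_{L^3}$, convert the $L^3$ norms into $L^2$ norms through the inverse inequality \eqref{eq:inversin} (picking up a factor $h^{-2d/3}$), and control $\|\u_h^{i+1}\|_{L^3}$ by the a priori bound \eqref{estimuhi3} of Theorem \ref{thm41}. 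A Young inequality then balances half of this term against $\|e^{i+1}\|_{L^2}^2$ and leaves a multiple of $h^{-2d/3}\|\u_h^{i+1}-\u_h^i\|_{L^2}^2$; this is exactly where the hypothesis $\gamma>\gamma_{**}$ intervenes, guaranteeing that the coefficient $c_3:=\frac{\gamma}{2}-\frac{\rho c_2^2}{\mu K_m}h^{-2d/3}$ in front of $\|\u_h^{i+1}-\u_h^i\|_{L^2}^2$ stays strictly positive.

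For the right-hand side I would use the splitting \eqref{formf} and the Lipschitz property of $\f_1$ to bound $\int_\Omega(\f(C_h^i)-\f(C_h))\cdot e^{i+1}\,d\x$ by $c_{\f_1}S_2^0\,|C_h^i-C_h|_{H^1}\|e^{i+1}\|_{L^2}$, and then Young once more. After absorbing the diffusion and relaxation gains, the estimate reads
\begin{equation*}
\frac{\gamma}{2}\|e^{i+1}\|_{L^2}^2-\frac{\gamma}{2}\|e^i\|_{L^2}^2+c_3\|\u_h^{i+1}-\u_h^i\|_{L^2}^2+\frac{\mu K_m}{2\rho}\|e^{i+1}\|_{L^2}^2\le \frac{\rho(c_{\f_1}S_2^0)^2}{\mu K_m}\,|C_h^i-C_h|_{H^1}^2 .
\end{equation*}
The coupling is then closed by controlling the concentration error by the velocity error: subtracting the third equations of $(V_{ah})$ and $(V_{ahi})$ and testing with the concentration difference produces a bound of the form $|C_h^i-C_h|_{H^1}\le \frac{S_6^0}{2\alpha}\big[2c_Ih^{-d/6}|C-C_h|_{H^1}+|C|_{W^{1,3}(\Omega)}+\|C\|_{L^\infty(\Omega)}/S_6^0\big]\|e^i\|_{L^2}$, where the bracket is made small by the smallness hypothesis \eqref{condsurci} together with the a priori rate $|C-C_h|_{H^1}\le c_1h$ from \eqref{estpriori1} and the mesh restriction \eqref{condsurhi}.

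Feeding this back in, the condition \eqref{condsurhi} is calibrated precisely so that the factor multiplying $\|e^i\|_{L^2}^2$ is at most $\frac{\mu K_m}{4\rho}$, which lets me rewrite the energy inequality as
\begin{equation*}
\Big(\tfrac{\gamma}{2}+\tfrac{\mu K_m}{4\rho}\Big)\big(\|e^{i+1}\|_{L^2}^2-\|e^i\|_{L^2}^2\big)+c_3\|\u_h^{i+1}-\u_h^i\|_{L^2}^2+\frac{\mu K_m}{4\rho}\|e^{i+1}\|_{L^2}^2\le 0 .
\end{equation*}
From here the conclusion is routine: $\|e^i\|_{L^2}^2$ is nonincreasing and bounded below, hence convergent, and summing the inequality gives $\sum_i\|e^{i+1}\|_{L^2}^2<\infty$, so $\u_h^i\to\u_h$ in $L^2(\Omega)^d$. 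Then $C_h^i\to C_h$ in $H^1(\Omega)$ follows from the concentration bound above, and $p_h^i\to p_h$ in $L^2(\Omega)$ is recovered from the inf-sup condition \eqref{infsuph1} applied to the difference of the first equations. I expect the only genuine difficulty to be the simultaneous balancing of the three smallness requirements --- $\gamma>\gamma_{**}$ to keep $c_3>0$, the coupling smallness \eqref{condsurci}, and the mesh restriction \eqref{condsurhi} --- so that the net coefficient of $\|e^i\|_{L^2}^2$ on the right is dominated by the diffusion gain on the left.
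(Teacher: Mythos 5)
Your proposal is correct and follows essentially the same route as the paper's proof: the same energy identity obtained by testing the difference of the first equations with $\u_h^{i+1}-\u_h$, the same splitting of the lagged Forchheimer term into a monotone part plus a consistency part controlled by inverse inequalities and the bound \eqref{estimuhi3}, the same Young balancing leading to $c_3=\frac{\gamma}{2}-\frac{\rho c_2^2}{\mu K_m}h^{-2d/3}>0$ under $\gamma>\gamma_{**}$, and the same closure of the coupling via the concentration bound \eqref{utilepourconvi} calibrated by \eqref{condsurci} and \eqref{condsurhi}. The only differences are cosmetic: you sketch the derivation of the concentration--velocity bound and the final telescoping/inf-sup step, which the paper simply cites from \cite{semaan}, and your phrase ``picking up a factor $h^{-2d/3}$'' for the $L^3\to L^2$ conversion should read $h^{-d/3}$ (the $h^{-2d/3}$ only appears after squaring in Young's inequality), a slip that does not affect the argument.
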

\section{A posteriori error estimation}
\label{sec:posteriori}
The  \emph{a posteriori} analysis controls the overall discretization error of a problem by providing error indicators  that are  easy to compute.  Once these error indicators are constructed, their efficiency can be proven by bounding each indicator by the local error.
\noindent As usual, for {\it a posteriori } error estimates, we introduce the following notations. We denote by
\begin{itemize}
\item $\Gamma_h^i$ the set of edges (when $d=2$) or faces (when
$d=3$) of $\kappa$ that are not contained in $\partial \Omega$.
\item $\Gamma_h^b$  the set of edges (when $d=2$) or faces (when
$d=3$) of $\kappa$ which are contained in $\partial \Omega$.
\end{itemize}
For every element $\kappa$ in  $\mathcal{T}_h$, we denote by $w_\kappa$ the union of elements $K$ of $\mathcal{T}_h$ such that $\kappa \cap K \ne \phi$. Furthermore, for every edge (when $d=2$) or face (when $d=3$) $e$ of the mesh
 $\mathcal{T}_h$, we denote by
\begin{itemize}
\item $\omega_e$ the union of elements of $ \mathcal{T}_h$  adjacent to
$e$.
\item $[\cdot]_e$ the jump through $e\in \Gamma_h^i$. 
\end{itemize}

In this and the next  sections, the {\it a posteriori} error estimates are established  for  slightly smoother solutions.
%
\subsection{Upper error bound}
\noindent In order to establish upper bounds, we introduce, on every edge  ($d=2$) or face ($d=3$) $e$ of the mesh, the function
\begin{equation}
\label{eq:phih}
 \phi_{h,1}^e=
 \left\{
\begin{array}{ccl}
\medskip
\ds \frac{1}{2}\,[\textbf{u}_h^{i+1}\cdot\textbf{n}]_{e} \quad \mbox{if } e \in \Gamma_h^i,\\
 \textbf{u}_h^{i+1}\cdot\textbf{n} \quad \mbox{if } e  \in \Gamma_h^b.
\end{array}
\right. \end{equation}

A standard calculation shows that the solutions of problems $(V_a)$ and $(V_{ahi})$ satisfy for all $(\v,q,S) \in X\times M\times Y$ and $(\v_h,q_h,S_h) \in X_h \times M_h\times Y_h $:
\begin{equation} \label{residuconcent}
\begin{aligned}
&\alpha \int_{\Omega}\nabla (C-C_h^{i+1})\cdot \nabla S\,d\x + \int_{\Omega}(\u\cdot \nabla C)S\,d\x- \int_{\Omega}(\u_h^{i+1}\cdot \nabla C_h^{i+1})S\,d\x + r_0\int_{\Omega}CS\,d\x- r_0\int_{\Omega}C_h^{i+1}S\,d\x  \\&
- \frac{1}{2}\int_{\Omega} \div \u_h^{i+1}C_h^{i+1}S\,d\x =
\sum_{\kappa \in \mathcal{T}_h} \Big[
\int_{\kappa}(g-g_h)(S-S_h)\,d\x
-\frac{\alpha}{2} \sum_{e \in \partial \kappa \cap \Gamma_h^i }
\int_{e} [\nabla C_h^{i+1}\cdot\n]_e (S-S_h)\, ds \\ &
+
 \int_{\kappa} (\alpha \triangle C_h^{i+1}-\u_h^{i+1}\cdot \nabla C_h^{i+1}-\frac{1}{2}\div \u_h^{i+1}C_h^{i+1}-r_0C_h^{i+1}+g_h)(S-S_h)d\x\Big],
\end{aligned}
\end{equation}
\\
\begin{equation} \label{residuvitesse}
\begin{aligned}
& \frac{\mu}{\rho} \int_{\Omega} K^{-1}(\u-\u_h^{i+1})\cdot \v \,d\x + \frac{\beta}{\rho} \int_{\Omega} (|\u|\u - |\u_h^{i}|\u_h^{i+1})\cdot \v \,d\x + \int_{\Omega} \nabla (p-p_h^{i+1})\cdot \v \,d\x \\
& = \sum_{\kappa \in \mathcal{T}_h} \Big[
\int_{\kappa} \big( - \nabla p_h^{i+1} -\gamma (\u_h^{i+1}-\u_h^i) - \frac{\mu}{\rho} K^{-1}\u_h^{i+1}- \frac{\beta}{\rho} |\u_h^i|\u_h^{i+1}+\f_h(., C_h^i) \big) \cdot (\v - \v_h)\,d\x \\
&
+ \gamma \int_{\kappa } (\u_h^{i+1}-\u_h^i)\cdot \v \,d\x + \int_{\kappa} \big(\f(.,C)-\f_h(.,C)\big)\cdot \v \,d\x +
\int_{\kappa} \big( \f_h( ., C) - \f_h(. , C_h^i) \big)\cdot \v \,d\x
\\
&
+ \int_{\kappa} \big( \f_h(. , C_h^i) - \f(. , C_h^i) \big) \cdot \v_h \, d\x
\Big],
\end{aligned}
\end{equation}
and
\begin{equation} \label{residupression}
\int_{\Omega} \nabla q \cdot (\u - \u_h^{i+1}) \,d\x = \sum_{\kappa \in \mathcal{T}_h} \Big[ \int_{\kappa} (q-q_h)\div \u_h^{i+1}\,d\x -
\sum_{e \in \partial \kappa} \int_{e} \phi_{h,1}^e (q-q_h)ds \Big],
\end{equation}
where $g_h$ is an approximation of $g$ which is constant on each triangle $\kappa $ of $\mathcal{T}_h$ and $\f_h$ is an approximation of $\f$ given by :
\[
\forall C \in Y, \quad
\f_h(.,C)= \frac{1}{|\kappa|}\int_{\kappa}\f(.,C)\,d\x.
\]
Since $\f_1$ is $c_{\f_1}$ Lipschitz, we clearly have the following property : \begin{equation} \label{fhlipshitz}
\left\|\f_h(C_1)-\f_h(C_2)\right\|_{L^2(\kappa)} \leq c_{\f_1} \left\|C_1-C_2\right\|_{L^2(\kappa)} \quad \forall (C_1, C_2) \in Y\times Y.
\end{equation}
From the error equations we deduce the following error indicators for each $\kappa \in \mathcal{T}_h$:

   \begin{equation} \label{premindic}
   \eta_{\kappa,i}^{(L_1)}= \left\|\u_h^{i+1}-\u_h^i\right\|_{L^2(\kappa)}, \end{equation}

    \begin{equation}\eta_{\kappa,i}^{(L_2)}= ||C_h^i-C_h^{i+1}||_{H^1(\kappa)},
    \end{equation}

    \begin{equation}
        \begin{aligned}
  \eta_{\kappa , i}^{(D_1)}= &h_{\kappa} \left\| \alpha \triangle C_h^{i+1}-\u_h^{i+1}\cdot \nabla C_h^{i+1}-\frac{1}{2}\div \u_h^{i+1}C_h^{i+1}-r_0C_h^{i+1}+g_h \right\|_{L^2(\kappa)}\\
    &+ \frac{1}{2} \sum_{e \in \partial \kappa \cap \Gamma_h^i }h_e^{\frac{1}{2}}\left\|\alpha[\nabla C_h^{i+1}\cdot \n]_e \right\|_{L^2(e)},
    \end{aligned}
    \end{equation}
    \begin{equation}
          \eta_{\kappa , i}^{(D_2)}= \left\|- \nabla p_h^{i+1} -\gamma (\u_h^{i+1}-\u_h^i) - \frac{\mu}{\rho} K^{-1}\u_h^{i+1}- \frac{\beta}{\rho} |\u_h^i|\u_h^{i+1}+\f_h(., C_h^i)
          \right\|_{L^2(\kappa)},
    \end{equation}
    and
    \begin{equation} \label{dernindic}
        \eta_{\kappa , i}^{(D_3)}=h_\kappa \left\|\div \u_h^{i+1}\right\|_{L^3(\kappa)} + \sum_{e \in \partial \kappa} h_e^{\frac{1}{3}}\left\|\phi_{h,1}^e\right\|_{L^3(e)}.
    \end{equation}

    In order to establish the upper bound, we need to first  bound the numerical solution $\u_h^{i+1}$ in $L^6(\Omega)^d$ in terms of the exact solution which is the subject of the next lemma.
    \begin{lem} \label{uborne6}
    Let the mesh satisfy \eqref{eq:reg}, under the assumptions of Theorem \ref{thmprioriconv}, Theorem \ref{thm41}, Theorem \ref{thm42}, there exists an integer $i_0$ depending on $h$ such that for all $i \geq i_0$, the numerical velocity $\u_h^{i+1}$ satisfies the following bound :
    \begin{equation}
    \left\|\u_h^{i+1}\right\|_{L^6(\Omega)^d} \leq \hat{c}_1(\u,p,C)
    \end{equation}
    where $\hat{c}_1$ is a constant depending on the exact solution $(\u,p,C)$ of problem $(V_a)$.
    \end{lem}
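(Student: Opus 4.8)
The plan is to bound $\left\|\u_h^{i+1}\right\|_{L^6(\Omega)^d}$ by comparing the iterate $\u_h^{i+1}$ to the exact velocity $\u$ through the fully discrete solution $\u_h$, exactly mirroring the strategy used in the Proposition for $\left\|\u_h\right\|_{L^6(\Omega)^d}$ just above. First I would insert the operator $\mathcal{F}_h \u$ and use a triangle inequality to write
\begin{equation*}
\left\|\u_h^{i+1}\right\|_{L^6(\Omega)^d} \leq \left\|\u_h^{i+1}-\mathcal{F}_h\u\right\|_{L^6(\Omega)^d} + \left\|\mathcal{F}_h\u\right\|_{L^6(\Omega)^d}.
\end{equation*}
Since $\u_h^{i+1}-\mathcal{F}_h\u$ is a finite element function, the first inverse inequality in \eqref{eq:inversin}, together with the uniform regularity \eqref{meshunifregular}, converts the $L^6$ norm into an $h^{-d/3}$-weighted $L^2$ norm, while the stability of $\mathcal{F}_h$ over $L^6(\Omega)^d$ controls the second term by $c\left\|\u\right\|_{L^6(\Omega)^d}$, which is finite because $\u \in W^{1,3}(\Omega)^d \subset L^6(\Omega)^d$.

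Next I would split the resulting $L^2$ term through the exact velocity and the fully discrete solution $\u_h$:
\begin{equation*}
\left\|\u_h^{i+1}-\mathcal{F}_h\u\right\|_{L^2(\Omega)^d} \leq \left\|\u_h^{i+1}-\u_h\right\|_{L^2(\Omega)^d} + \left\|\u_h-\u\right\|_{L^2(\Omega)^d} + \left\|\u-\mathcal{F}_h\u\right\|_{L^2(\Omega)^d}.
\end{equation*}
The last two terms are already controlled: the middle one by the \emph{a priori} estimate \eqref{estpriori1}, which gives $\left\|\u_h-\u\right\|_{L^2(\Omega)^d} \leq c_1 h$, and the final one by the approximation property \eqref{1.2} of $\mathcal{F}_h$, which yields a bound of order $h$ against $|\u|_{W^{1,3}(\Omega)}$. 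After multiplication by $h^{-d/3}$ these two contributions remain bounded uniformly in $h$ (they behave like $h^{1-d/3}$, which is bounded for $d=2,3$), so the entire difficulty concentrates on the genuinely iterative term $\left\|\u_h^{i+1}-\u_h\right\|_{L^2(\Omega)^d}$.

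The main obstacle, and the reason the statement requires an integer $i_0$ depending on $h$, is the term $h^{-d/3}\left\|\u_h^{i+1}-\u_h\right\|_{L^2(\Omega)^d}$: the convergence of the successive approximations established in Theorem \ref{thm42} guarantees only that $\left\|\u_h^{i+1}-\u_h\right\|_{L^2(\Omega)^d}\to 0$ as $i\to\infty$ for each fixed $h$, but this quantity is multiplied by the blowing-up factor $h^{-d/3}$. The plan here is to exploit the contraction structure hidden in \eqref{lastine}: that inequality shows the sequence $\left\|\u_h^{i}-\u_h\right\|_{L^2(\Omega)^d}$ decreases geometrically, so for each $h$ one can choose $i_0=i_0(h)$ large enough that $\left\|\u_h^{i+1}-\u_h\right\|_{L^2(\Omega)^d} \leq h^{d/3}$ for all $i\geq i_0$, which forces the weighted term to stay bounded by a constant. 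Assembling these pieces gives the uniform bound with $\hat c_1(\u,p,C)$ absorbing $c_1$, the constant from \eqref{1.2}, the inverse-inequality constant $c_I$, the regularity parameters, and $\left\|\u\right\|_{L^6(\Omega)^d}$; the key point to make precise is that $i_0$ is allowed to depend on $h$, so no uniform-in-$h$ control of the iteration rate is needed.
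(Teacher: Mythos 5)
Your proof is correct and follows essentially the same route as the paper: the paper bounds $\left\|\u_h^{i+1}\right\|_{L^6(\Omega)^d}$ by splitting through $\u_h$ and invoking the bound \eqref{uhl6} on $\left\|\u_h\right\|_{L^6(\Omega)^d}$ as a black box, whereas you re-derive that bound inline by splitting through $\mathcal{F}_h\u$, but the two decisive steps are identical. Namely, the inverse inequality \eqref{eq:inversin} producing the factor $h^{-d/3}$, and the choice of $i_0(h)$ via the convergence of Theorem \ref{thm42} so that $\left\|\u_h^{i+1}-\u_h\right\|_{L^2(\Omega)^d}\le h^{d/3}$ for $i\ge i_0$, are exactly the paper's argument.
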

    \begin{proof}
      Let $(\u,p,C)$ be the solution of problem $(V_a)$, $(\u_h,p_h,C_h)$ be the solution of $(V_{ah})$ and $(\u_h^{i+1},p_h^{i+1},C_h^{i+1})$ the solution of $(V_{ahi})$. \\
By using \eqref{eq:inversin} (for $p=6$) and the bound \eqref{uhl6},
the term $\|\u^{i+1}_h \|_{L^6(\Omega)}$ can be bounded as following:
\begin{equation} \label{equat412}
\begin{aligned}
    \left\|\u_h^{i+1}\right\|_{L^6(\Omega)^d} &\leq \left\|\u_h^{i+1}-\u_h\right\|_{L^6(\Omega)^d} + \left\|\u_h\right\|_{L^6(\Omega)^d} \\
    & \leq c_I h^{-d/3}\left\|\u_h^{i+1}-\u_h\right\|_{L^2(\Omega)^d} + \hat{c}(\u,p,C).
    \end{aligned}
\end{equation}
As  $\u^{i+1}_h$ converges to $\u_h$ in $L^2(\Omega)^d$,  there exists an integer $i_0$ depending on $h$ such that for all $i\ge i_0$ we have
\begin{equation}\label{equat66}
\| \u^{i+1}_h - \u_h \|_{L^2(\Omega)^d} \le h^{d/3}.
\end{equation}
Then inequality \eqref{equat412} gives by using  inequality \eqref{equat66} the desired result.

$\hfill\Box$

%
\end{proof}
Our main goal is to get an upper bound of the error between the exact solution $(\u,p,C)$ of problem $(V_a)$ and the numerical solution $(\u_h^{i+1},p_h^{i+1},C_h^{i+1})$ of $(V_{ahi})$ in $X \times M \times Y$. To obtain the desired result, we need to first establish an upper bound  for the error between the exact and numerical solutions in $L^2(\Omega)^d \times M \times Y$.
\begin{lem} \label{existvr}
There exists a velocity $\v_r$ in $L^3(\Omega)^d$ that solves the following variational problem:
\begin{equation} \begin{aligned} \label{upbdvit2}
 \forall q \in M, \quad \int_{\Omega} \nabla q\cdot \v_r \,d\x =
 \sum_{\kappa \in \mathcal{T}_h} \Big( \int_{\kappa }(q-r_hq)\div \u_h^{i+1}\,d\x - \sum_{e \in \partial_{\kappa}}\int_{e}\phi_{h,1}^{e}(q-r_{h}q)\,ds \Big)
 \end{aligned}
 \end{equation}
 which satisfies the following bound:
 \begin{equation} \begin{aligned} \label{upbdvit4}
     \norm{\v_r}_{L^3(\Omega)^d} \leq  c'_2 \Big( \sum_{\kappa \in \mathcal{T}_h}  \big( \eta_{\kappa,i}^{(D_3)} \big) \Big).
     \end{aligned}
 \end{equation}
 \begin{proof}
Using equation \eqref{residupression} with $q_h=r_hq$, considering the inf-sup condition \eqref{infsup}, and the fact that the right hand side term is a continuous linear function of $q$, we deduce the existence a velocity $\v_r$ in $X$ such that \eqref{upbdvit2} is verified,  and satisfying the following bound:
 \begin{equation} \begin{aligned} \label{upbdvit3}
     \norm{\v_r}_{L^3(\Omega)^d} \leq \sup_{q \in M} \frac{1}{\|\nabla q\|_{L^{3/2}(\Omega)^3}}\Big|
     \sum_{\kappa \in \mathcal{T}_h}& \big[ \norm{q - r_hq}_{L^{3/2}(\kappa)} \norm{\div \u_h^{i+1}}_{L^3(\kappa)}
     \\
     &+ \sum_{e \in \partial \kappa} \norm{\phi_{h,1}^{e}}_{L^{3}(e)}
     \norm{q - r_hq}_{L^{3/2}(e)} \big] \Big|.
     \end{aligned}
 \end{equation}
 Thus, from the properties of the operator $r_h$, the regularity of $\mathcal{T}_h$, and the following H\"{o}lder inequality,
 $(p=3/2 , q=3)$
 \[
 \sum_{k=1}^n a_kb_k \leq \big(\sum_{k=1}^n a_k^p \big)^{1/p} \big(\sum_{k=1}^n b_k^q \big)^{1/q},
 \]
 we get after cubing the last equation:
 \begin{equation}
 \norm{\v_r}_{L^3(\Omega)^d}^3 \leq c_2 \sum_{\kappa \in \mathcal{T}_h}  \big( \eta_{\kappa,i}^{(D_3)} \big)^3,
 \end{equation}
 with $c_2>0$ is a constant independent of $h$.
 Finally, we obtain  the wanted result by taking the cubic root of the previous inequality.

$\hfill\Box$

 \end{proof}

\end{lem}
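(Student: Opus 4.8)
The plan is to view the right-hand side of \eqref{upbdvit2} as a linear functional $F$ of $q \in M$ and to recover $\v_r \in X = L^3(\Omega)^d$ from the inf-sup condition \eqref{infsup}. Setting $q_h = r_h q$ in the residual identity \eqref{residupression}, the right-hand side becomes exactly
\[
F(q) = \sum_{\kappa \in \mathcal{T}_h}\Big(\int_\kappa (q - r_h q)\,\div \u_h^{i+1}\,d\x - \sum_{e \in \partial\kappa}\int_e \phi_{h,1}^e (q - r_h q)\,ds\Big).
\]
Because \eqref{infsup} asserts that the form $(\v,q)\mapsto \int_\Omega \v\cdot\nabla q$ satisfies an inf-sup condition on $X\times M$ with constant $1$ (the norm on $M$ being $q \mapsto \norm{\nabla q}_{L^{3/2}(\Omega)}$), the associated operator from $X$ onto the dual of $M$ is surjective with a $(1/\beta)=1$ bound on its right inverse. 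Hence, provided $F$ is continuous, there is a $\v_r\in X$ solving \eqref{upbdvit2} with $\norm{\v_r}_{L^3(\Omega)^d} \le \sup_{q\in M}|F(q)|/\norm{\nabla q}_{L^{3/2}(\Omega)}$.

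Next I would estimate that dual norm. H\"older's inequality with exponents $3/2$ and $3$ on each element and each edge bounds $|F(q)|$ by $\sum_\kappa \norm{q-r_hq}_{L^{3/2}(\kappa)}\norm{\div\u_h^{i+1}}_{L^3(\kappa)}$ plus $\sum_\kappa\sum_{e\in\partial\kappa}\norm{\phi_{h,1}^e}_{L^3(e)}\norm{q-r_hq}_{L^{3/2}(e)}$. Inserting the interpolation estimates \eqref{estom} and \eqref{estgam1} for the Cl\'ement-type operator $r_h$ turns the factors containing $q$ into $c_\kappa h_\kappa |q|_{W^{1,3/2}(w_\kappa)}$ and $c_e h_e^{1/3}|q|_{W^{1,3/2}(w_e)}$, so that the remaining factors $h_\kappa\norm{\div\u_h^{i+1}}_{L^3(\kappa)}$ and $h_e^{1/3}\norm{\phi_{h,1}^e}_{L^3(e)}$ are precisely the constituents of $\eta_{\kappa,i}^{(D_3)}$ in \eqref{dernindic}; in particular this chain shows $F \in M'$, so the inf-sup argument above applies.

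Finally I would apply the discrete H\"older inequality of exponents $(3/2,3)$, pairing the seminorm factors $|q|_{W^{1,3/2}(\cdot)}$ (raised to power $3/2$) against the indicator factors (raised to power $3$). Using the finite overlap of the patches $w_\kappa$ and $w_e$, controlled by the regularity parameter $\sigma$ of \eqref{eq:reg}, I bound $\big(\sum_\kappa |q|_{W^{1,3/2}(w_\kappa)}^{3/2}\big)^{2/3}$ by a constant times $\norm{\nabla q}_{L^{3/2}(\Omega)}$, after which dividing out this factor yields $\sup_{q}|F(q)|/\norm{\nabla q}_{L^{3/2}(\Omega)} \le c\big(\sum_\kappa (\eta_{\kappa,i}^{(D_3)})^3\big)^{1/3}$. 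Combining with the inf-sup bound and estimating the $\ell^3$ sum by the $\ell^1$ sum, $\big(\sum_\kappa (\eta_{\kappa,i}^{(D_3)})^3\big)^{1/3}\le \sum_\kappa \eta_{\kappa,i}^{(D_3)}$, gives \eqref{upbdvit4}. The main obstacle is the bookkeeping in this last step: the edge terms, whose seminorm factors live on the patches $w_e$ rather than on $w_\kappa$, must be reindexed and grouped per element (each interior edge being shared by two elements) so that they coalesce with the element term into the single indicator $\eta_{\kappa,i}^{(D_3)}$, while ensuring the patch overlaps enter only through an $h$-independent constant.
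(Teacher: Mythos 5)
Your proposal is correct and takes essentially the same route as the paper's own proof: existence of $\v_r$ via the inf-sup condition \eqref{infsup} applied to the continuous linear functional obtained from \eqref{residupression} with $q_h=r_hq$, then the Cl\'ement-type estimates \eqref{estom}--\eqref{estgam1} together with the discrete H\"older inequality with exponents $(3/2,3)$, and finally the bound of the $\ell^3$ sum by the $\ell^1$ sum of the indicators $\eta_{\kappa,i}^{(D_3)}$. The only difference is presentational: you make explicit the duality/surjectivity argument behind the inf-sup condition and the finite-overlap bookkeeping for the patches $w_\kappa$, $w_e$, which the paper compresses into ``the properties of the operator $r_h$'' and ``the regularity of $\mathcal{T}_h$''.
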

\begin{thm} \label{thmupbd1}
Under the assumptions of Lemma \ref{uborne6}, we suppose in addition that the exact solution $(\u,C)$ of Problem $(V_a)$ satisfies: $\u \in L^{\infty}(\Omega)^d$  and
%
\begin{equation} \label{conditionC}
S_6^0|C|_{W^{1,3}(\Omega)}+\norm{C}_{L^{\infty}(\Omega)} \leq \frac{\alpha\mu K_m}{8\rho c_{\f_1}^2S_2^0}.
\end{equation}
Then, there exists an integer $i_0$ depending on $h$ such that $\forall i \geq i_0$, the solutions $(\u,p,C)$
and $(\u_h^{i+1},p_h^{i+1}, C_h^{i+1})$ of problems $(V_a)$ and $(V_{ahi})$ satisfy the following error inequality
\begin{equation} \begin{aligned} \label{upperbound1}
\norm{\u-\u_h^{i+1}}_{L^2(\Omega)^d}+& \left\|C-C_h^{i+1}\right\|_{H^1(\Omega)} + \norm{\v_r}_{L^3(\Omega)^d}\leq
\tilde{c}_3 \Big( \sum_{\kappa \in \mathcal{T}_h} \big( \eta_{\kappa ,i}^{(D_1)}+\eta_{\kappa ,i}^{(D_2)}+\eta_{\kappa ,i}^{(D_3)}
 + \eta_{\kappa ,i}^{(L_1)}\\
 &+\eta_{\kappa ,i}^{(L_2)}
 + h_k \norm{g-g_h}_{L^2(\kappa)}+\norm{\f(.,C)-\f_h(.,C)}_{L^2(\kappa)}\big) \Big)
 \end{aligned}
\end{equation}
where $\tilde{c}_3$ is a positive constant independent of $h$.
\end{thm}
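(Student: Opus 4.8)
The plan is to derive the upper bound by testing the three residual equations \eqref{residuconcent}, \eqref{residuvitesse}, \eqref{residupression} with carefully chosen functions, and then to absorb the nonlinear cross-terms using the smallness condition \eqref{conditionC}. First I would estimate the concentration error. Taking $S = C - C_h^{i+1}$ in \eqref{residuconcent} and choosing $S_h = R_h S$, the left-hand side produces the coercive term $\alpha |C-C_h^{i+1}|_{H^1(\Omega)}^2$ coming from the diffusion, together with the reaction term $r_0 \norm{C-C_h^{i+1}}_{L^2(\Omega)}^2$; the convection terms $\int_\Omega (\u\cdot\nabla C)S - \int_\Omega(\u_h^{i+1}\cdot\nabla C_h^{i+1})S$ must be rewritten, after adding and subtracting $\int_\Omega(\u_h^{i+1}\cdot\nabla C)S$ and using the skew-symmetrized form with $\frac12\int_\Omega\div\u_h^{i+1}\,C_h^{i+1}S$, so that the troublesome part reduces to a term controlled by $\norm{\u-\u_h^{i+1}}_{L^2(\Omega)^d}$ (via the $L^6$ bound on $\u_h^{i+1}$ from Lemma \ref{uborne6} and Hölder/Sobolev embeddings) plus the discretization residual $\eta_{\kappa,i}^{(D_1)}$ and the data term $h_\kappa\norm{g-g_h}_{L^2(\kappa)}$, using \eqref{1.2} and \eqref{estgam} for the interpolation estimates on $S-S_h$.

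Next I would estimate the velocity error. The natural choice is $\v = \u - \u_h^{i+1}$ in \eqref{residuvitesse} together with $\v_h = \mathcal{F}_h(\u-\u_h^{i+1})$. On the left, the linear Darcy term gives coercivity $\frac{\mu K_m}{\rho}\norm{\u-\u_h^{i+1}}_{L^2(\Omega)^d}^2$ by \eqref{KmM}, while the Forchheimer difference $|\u|\u - |\u_h^i|\u_h^{i+1}$ must be split as $(|\u|\u - |\u_h^{i+1}|\u_h^{i+1}) + (|\u_h^{i+1}| - |\u_h^i|)\u_h^{i+1}$; the first piece is handled by the monotonicity property of $\mathcal{A}$ (property 3 of the listed properties), and the second piece is bounded by the linearization indicator $\eta_{\kappa,i}^{(L_1)} = \norm{\u_h^{i+1}-\u_h^i}_{L^2(\kappa)}$ using the $L^6$ bound on $\u_h^{i+1}$. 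On the right, the residual integrals reproduce $\eta_{\kappa,i}^{(D_2)}$, the extra $\gamma\int_\kappa(\u_h^{i+1}-\u_h^i)\cdot\v$ gives another $\eta_{\kappa,i}^{(L_1)}$ contribution, the term $\int_\kappa(\f(.,C)-\f_h(.,C))\cdot\v$ gives the data indicator $\norm{\f(.,C)-\f_h(.,C)}_{L^2(\kappa)}$, and crucially the term $\f_h(.,C)-\f_h(.,C_h^i)$ is controlled by \eqref{fhlipshitz} by $c_{\f_1}\norm{C-C_h^i}_{L^2(\kappa)} \le c_{\f_1}(\norm{C-C_h^{i+1}}_{L^2}+\eta_{\kappa,i}^{(L_2)})$, thereby coupling back to the concentration error.

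The pressure error is then recovered separately: Lemma \ref{existvr} already produces $\v_r$ solving \eqref{upbdvit2} with $\norm{\v_r}_{L^3(\Omega)^d}$ bounded by $\sum_\kappa\eta_{\kappa,i}^{(D_3)}$, and by the inf-sup condition \eqref{infsup} together with \eqref{residuvitesse} the pressure gradient error is dominated by the already-estimated velocity residual, so $\norm{\v_r}_{L^3(\Omega)^d}$ is inserted as the third term on the left-hand side of \eqref{upperbound1}. The main obstacle, and the step that forces the smallness hypothesis \eqref{conditionC}, is the simultaneous coupling: the concentration estimate contains a term proportional to $\norm{\u-\u_h^{i+1}}_{L^2(\Omega)^d}$ (from the convection) while the velocity estimate contains a term proportional to $\norm{C-C_h^{i+1}}_{L^2(\Omega)}$ (from the Lipschitz force $\f_1$). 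I would add the two inequalities, use \eqref{conditionC} to make the product of the coupling constants strictly less than the coercivity constants, and absorb both cross-terms into the left-hand side — this is where the explicit constant $\frac{\alpha\mu K_m}{8\rho c_{\f_1}^2 S_2^0}$ enters, guaranteeing a positive remaining coercivity. A secondary technical point is that the $L^6$ bound on $\u_h^{i+1}$ holds only for $i\ge i_0$ (Lemma \ref{uborne6}), which is why the conclusion is stated for $i\ge i_0$; collecting all terms and relabeling constants yields \eqref{upperbound1} with a single constant $\tilde{c}_3$.
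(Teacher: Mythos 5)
Your concentration estimate and your final absorption step (combining the two coupled inequalities and using \eqref{conditionC} to retain positive coercivity) follow the paper's argument faithfully. The genuine gap is in the velocity estimate. You propose to test \eqref{residuvitesse} with $\v=\u-\u_h^{i+1}$, but then the term $\int_{\Omega}\nabla (p-p_h^{i+1})\cdot(\u-\u_h^{i+1})\,d\x$ on the left-hand side does not vanish and is never addressed in your proposal: $\u-\u_h^{i+1}$ is not orthogonal to gradients of functions in $M$, since $\u_h^{i+1}$ is only discretely divergence-free. This term cannot be dropped, and it cannot be bounded by invoking the pressure estimate either, because in the paper the pressure bound \eqref{upperbound2} is proved \emph{after} and \emph{by means of} Theorem \ref{thmupbd1}; using it here would be circular. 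At best, rewriting this term via \eqref{residupression} leaves you with a product of $\eta_{\kappa,i}^{(D_3)}$-type quantities and the uncontrolled norm $\norm{\nabla(p-p_h^{i+1})}_{L^{3/2}(\Omega)^d}$, which does not close to a bound of the form \eqref{upperbound1}.

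This is precisely the role of Lemma \ref{existvr}, which you have misread as a device for ``recovering the pressure.'' The paper sets $\z_0=\u-\u_h^{i+1}-\v_r$ and tests \eqref{upbdvit1} with $\v=\z_0$, $\v_h=\0$; by the construction \eqref{upbdvit2} combined with \eqref{residupression}, one has $\int_{\Omega}\nabla q\cdot\z_0\,d\x=0$ for every $q\in M$ (relation \eqref{perpond}), so the pressure-gradient term disappears identically from the velocity estimate. The price is extra work: the monotonicity of $\mathcal{A}$ is applied after splitting off the terms involving $\v_r$ (which are then controlled through \eqref{upbdvit4}, i.e.\ by $\eta_{\kappa,i}^{(D_3)}$), and the final velocity error is recovered from $\norm{\u-\u_h^{i+1}}_{L^2(\Omega)^d}\le\norm{\z_0}_{L^2(\Omega)^d}+\norm{\v_r}_{L^2(\Omega)^d}$. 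Without this decomposition, or some equivalent mechanism eliminating the pressure error from the velocity equation, your argument does not go through.
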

\begin{proof}
Let us start with the concentration equation \eqref{residuconcent} tested with $S=C-C_{h}^{i+1}$ and $S_h=R_h(S)$. It can be written as:
\begin{equation}
    \begin{aligned} \label{420}
&\alpha | C-C_h^{i+1}|^2_{H^1(\Omega)} + r_0 \left\|C-C_h^{i+1}\right\|_{L^2(\Omega)}^2 \\&
\qquad  =
-\int_{\Omega}(\u\cdot \nabla C)S\,d\x + \int_{\Omega}(\u_h^{i+1}\cdot \nabla C_h^{i+1})S\,d\x
+\frac{1}{2}\int_{\Omega} \div \u_h^{i+1}C_h^{i+1}S\,d\x \\
&
\qquad \qquad+\sum_{\kappa \in \mathcal{T}_h} \Big[
\int_{\kappa}(g-g_h)(S-S_h)\,d\x
-\frac{\alpha}{2} \sum_{e \in \partial \kappa \cap \Gamma_h^i }
\int_{e} [\nabla C_h^{i+1}\cdot\n]_e (S-S_h)\, ds \\ &
\qquad \qquad +
 \int_{\kappa} (\alpha \triangle C_h^{i+1}-\u_h^{i+1}\cdot \nabla C_h^{i+1}-\frac{1}{2}\div \u_h^{i+1}C_h^{i+1}-r_0C_h^{i+1}+g_h)(S-S_h)d\x\Big].
\end{aligned}
\end{equation}

The first three terms of the right-hand side in the previous equation can be written as :
\begin{equation} \begin{aligned} \label{upbd1}
& - \int_{\Omega}(\u\cdot \nabla C)S\,d\x+ \int_{\Omega}(\u_h^{i+1}\cdot \nabla C_h^{i+1})S\,d\x
+ \frac{1}{2}\int_{\Omega} \div \u_h^{i+1}C_h^{i+1}S\,d\x\\
&
\qquad = -\int_{\Omega}((\u - \u_h^{i+1})\cdot \nabla C)(C-C_h^{i+1})\,d\x \\
&
\qquad \qquad - \int_{\Omega}(\u_h^{i+1}\cdot \nabla (C-C_h^{i+1})(C-C_h^{i+1})\,d\x
     + \frac{1}{2}\int_{\Omega} \div \u_h^{i+1}C_h^{i+1}(C-C_h^{i+1})\,d\x.
    \end{aligned}
\end{equation}
By applying Green's formula and using the fact that the fluid is incompressible, the last two terms of \eqref{upbd1} can be written as:
\begin{equation}
\begin{aligned}
  -\int_{\Omega}(\u_h^{i+1}\cdot \nabla (C-C_h^{i+1}) (C-C_h^{i+1})\,d\x
 &+ \frac{1}{2}\int_{\Omega} \div \u_h^{i+1}C_h^{i+1}(C-C_h^{i+1})\,d\x\\
 &=\frac{1}{2}\int_{\Omega}\div \u_h^{i+1}C(C-C_h^{i+1})\,d\x
 \\
 & = -\frac{1}{2}\int_{\Omega} \div (\u - \u_h^{i+1})C(C-C_h^{i+1})\,d\x
 \\
 &=\frac{1}{2}\int_{\Omega}(\u - \u_h^{i+1})\cdot\big(
 \nabla C(C-C_h^{i+1})+C\nabla (C-C_h^{i+1})\big)\,d\x.
 \end{aligned}  \end{equation}
 Thus, the right-hand side of Equation \eqref{upbd1} can be written as
 \begin{equation} \begin{aligned} \label{upbd2}
    &-\int_{\Omega}((\u - \u_h^{i+1})\cdot \nabla C)(C-C_h^{i+1})\,d\x -
    \int_{\Omega}(\u_h^{i+1}\cdot \nabla (C-C_h^{i+1})(C-C_h^{i+1})\,d\x \\
    &
     + \frac{1}{2}\int_{\Omega} \div \u_h^{i+1}C_h^{i+1}(C-C_h^{i+1})\,d\x=- \frac{1}{2}\int_{\Omega}(\u - \u_h^{i+1})\cdot\big(
 \nabla C(C-C_h^{i+1})-C\nabla (C-C_h^{i+1})\big)\,d\x.
 \end{aligned}
 \end{equation}
 Using  H\"older inequality, the last term can be bounded by
 \begin{equation} \begin{aligned} \label{upbd3}
  &-\int_{\Omega}((\u - \u_h^{i+1})\cdot \nabla C)(C-C_h^{i+1})\,d\x -
    \int_{\Omega}(\u_h^{i+1}\cdot \nabla (C-C_h^{i+1})(C-C_h^{i+1})\,d\x \\
    &
     +\frac{1}{2}\int_{\Omega} \div \u_h^{i+1}C_h^{i+1}(C-C_h^{i+1})\,d\x
\leq  \frac{1}{2}(S_6^0|C|_{W^{1,3}(\Omega)}+\left\|C\right\|_{L^{\infty}(\Omega)})\left\|\u-\u_h^{i+1}\right\|_{L^2(\Omega)^d}
 |C-C_h^{i+1}|_{H^1(\Omega)}.
 \end{aligned}
 \end{equation}
 Now, the last three terms of the right-hand side of \eqref{420} can be straightforwardly bounded by
 \begin{equation} \begin{aligned} \label{425}
 \sum_{\kappa \in \mathcal{T}_h} \Big[
&\int_{\kappa}(g-g_h)(S-S_h)\,d\x
-\frac{\alpha}{2} \sum_{e \in \partial \kappa \cap \Gamma_h^i }
\int_{e} [\nabla C_h^{i+1}\cdot\n]_e (S-S_h)\, ds \\ &
+
 \int_{\kappa} (\alpha \triangle C_h^{i+1}-\u_h^{i+1}\cdot \nabla C_h^{i+1}-\frac{1}{2}\div \u_h^{i+1}C_h^{i+1}-r_0C_h^{i+1}+g_h)(S-S_h)d\x\Big] \\
 & \leq
  \sum_{\kappa \in \mathcal{T}_h} \Big[ \norm{\alpha \triangle
     C_h^{i+1}-\u_h^{i+1}\cdot \nabla C_h^{i+1}-\frac{1}{2}\div \u_h^{i+1}C_h^{i+1}-r_0C_h^{i+1}+g_h}_{L^2(\kappa)}\norm{S-S_h}_{L^2(\kappa)} \\
     &+ \left\|g-g_h\right\|_{L^2(\kappa)} \norm{S-S_h}_{L^2(\kappa)}+ \frac{1}{2}\sum_{e \in \partial \kappa \cap \Gamma_h^i}\norm{\alpha[\nabla C_h^{i+1}\cdot \n]}_{L^2(e)}\norm{S-S_h}_{L^2(e)}
     \Big].
     \end{aligned}
 \end{equation}
 Then the fact that $S_h=R_h(S)$, the approximation properties of $R_h$, equations \eqref{upbd3} and \eqref{425}, and the regularity of $\mathcal{T}_h$ yield (by using the discrete Cauchy-Schwartz inequality for equation \eqref{425})
 \begin{equation} \label{upbdconc}
 \begin{aligned}
 \alpha |C-C_h^{i+1}|_{H^1(\Omega)} &\leq
 c_1 \Big( \sum_{\kappa \in \mathcal{T}_h}
 \big( (\eta_{\kappa,i}^{(D_1)})^2 + h_{\kappa}^2\norm{g-g_h}_{L^2(\kappa)}^2 \big)\Big)^{\frac{1}{2}}
\\
& \quad + \frac{1}{2}(S_6^0|C|_{W^{1,3}(\Omega)}+\left\|C\right\|_{L^{\infty}(\Omega)})\left\|\u-\u_h^{i+1}\right\|_{L^2(\Omega)^d} \\
& \leq  c'_1 \Big( \sum_{\kappa \in \mathcal{T}_h}
 \big( \eta_{\kappa,i}^{(D_1)} + h_{\kappa}\norm{g-g_h}_{L^2(\kappa)} \big) \Big)\\
 & \qquad +\frac{1}{2}(S_6^0|C|_{W^{1,3}(\Omega)}+\left\|C\right\|_{L^{\infty}(\Omega)})\left\|\u-\u_h^{i+1}\right\|_{L^2(\Omega)^d}.
 \end{aligned}
 \end{equation}
 Next, we focus on the velocity equation. The velocity error equation \eqref{residuvitesse} can be written as
 \begin{equation} \begin{aligned} \label{upbdvit1}
 & \frac{\mu}{\rho} \int_{\Omega} K^{-1}(\u-\u_h^{i+1})\cdot \v \,d\x + \frac{\beta}{\rho} \int_{\Omega} (|\u|\u - |\u_h^{i+1}|\u_h^{i+1})\cdot \v \,d\x + \int_{\Omega} \nabla (p-p_h^{i+1})\cdot \v \,d\x \\
 & \quad = -\frac{\beta}{\rho} \int_{\Omega} \big( ( |\u_h^{i+1}|-|\u_h^i| ) \u_h^{i+1} \big) \cdot \v \,d\x + \gamma \int_{\Omega} (\u_h^{i+1}-\u_h^i)\cdot \v \,d\x \\
 & \qquad + \sum_{\kappa \in \mathcal{T}_h} \Big[
\int_{\kappa} \big( - \nabla p_h^{i+1} -\gamma (\u_h^{i+1}-\u_h^i) - \frac{\mu}{\rho} K^{-1}\u_h^{i+1}- \frac{\beta}{\rho} |\u_h^i|\u_h^{i+1}+\f_h(., C_h^i) \big) \cdot (\v - \v_h)\,d\x \\
& \qquad +\int_{\kappa} \big(\f(.,C)-\f_h(.,C)\big)\cdot \v \,d\x +
\int_{\kappa} \big( \f_h( ., C) - \f_h(. , C_h^i) \big)\cdot \v \,d\x + \int_{\kappa} \big( \f_h(. , C_h^i) - \f(. , C_h^i) \big) \cdot \v_h \, d\x \Big].
 \end{aligned}
 \end{equation}

 To simplify, we set $\z_0=\u - \u_h^{i+1}- \v_r$ and we test \eqref{upbdvit1} with $\v=\z_0 $ and $\v_h=\0$. By construction,
 \eqref{upbdvit2} and \eqref{residupression} with $q_h=r_hq$ imply that
 \begin{equation} \label{perpond} \forall q \in M, \quad \int_{\Omega} \nabla q\cdot \z_0\,d\x =0.
 \end{equation}
 Hence, \eqref{upbdvit1} becomes
 \begin{equation} \begin{aligned} \label{upbdvit5}
 \frac{\mu}{\rho} & \int_{\Omega} K^{-1}\z_0 \cdot \z_0 \,d\x +
 \frac{\mu}{\rho} \int_{\Omega}K^{-1}\v_r \cdot \z_0 \,d\x \\
 &+ \frac{\beta}{\rho} \int_{\Omega} ( |\u|\u - |\u_h^{i+1}|\u_h^{i+1}| ) ( \u - \u_h^{i+1})\,d\x
 -\frac{\beta}{\rho}\int_{\Omega} ( |\u|\u - |\u_h^{i+1}|\u_h^{i+1})\cdot \v_r \,d\x \\
 &\quad = \gamma \int_{\Omega} (\u_h^{i+1}-\u_h^i)\cdot \z_0 \,d\x -
 \frac{\beta}{\rho} \int_{\Omega} \big( ( |\u_h^{i+1}|-|\u_h^i| ) \u_h^{i+1} \big) \cdot \z_0 \,d\x \\
 & \qquad + \sum_{\kappa \in \mathcal{T}_h} \Big[
\int_{\kappa} \big( - \nabla p_h^{i+1} -\gamma (\u_h^{i+1}-\u_h^i) - \frac{\mu}{\rho} K^{-1}\u_h^{i+1}- \frac{\beta}{\rho} |\u_h^i|\u_h^{i+1}+\f_h(., C_h^i) \big) \cdot \z_0\,d\x \\
&  \qquad +\int_{\kappa} \big(\f(.,C)-\f_h(.,C)\big)\cdot \z_0 \,d\x +
\int_{\kappa} \big( \f_h( ., C) - \f_h(. , C_h^i) \big)\cdot \z_0 \,d\x  \Big].
\end{aligned}
 \end{equation}
We decompose the fourth term in the last equation as follows
\[
\ds \frac{\beta}{\rho} \int_\Omega (|\u| \u - |\u^{i+1}_h| \u^{i+1}_h) \cdot \v_r \, d\x = \ds \frac{\beta}{\rho} \int_\Omega |\u| (\u - \u^{i+1}_h) \cdot \v_r \, d\x + \frac{\beta}{\rho} \int_\Omega (|\u| - |\u^{i+1}_h|) \u^{i+1}_h \cdot \v_r \, d\x.
\]
Thus, equation \eqref{upbdvit5} becomes by inserting $\pm \u$ in the second term of the right-hand side, and by using the monotonicity of $\mathcal{A}$
\begin{equation} \begin{aligned} \label{upbdvit6}
& \frac{\mu}{\rho} \int_{\Omega} K^{-1}\z_0 \cdot \z_0 \,d\x +c_m\left\|\u -\u_h^{i+1}\right\|_{L^3(\Omega)^d}^3
\leq \\
& \frac{\mu}{\rho} \int_{\Omega}|K^{-1} \v_r| |\z_0 |\,d\x + \frac{\beta}{\rho}
\int_{\Omega}|\u| |\u - \u_h^{i+1}||\v_r|\,d\x + \frac{\beta}{\rho}
\int_{\Omega}|\u - \u_h^{i+1}||\u_h^{i+1}||\v_r|\,d\x \\
&+ \gamma \int_{\Omega}|\u - \u_h^{i+1}||\z_0|\,d\x + \frac{\beta}{\rho}\int_{\Omega}|\u_h^{i+1}-\u_h^i||\u_h^{i+1}-\u||\z_0|\,d\x + \frac{\beta}{\rho}\int_{\Omega}|\u_h^{i+1}-\u_h^i||\u||\z_0|\,d\x \\
& + \sum_{\kappa \in \mathcal{T}_h} \Big[
\int_{\kappa} |- \nabla p_h^{i+1} -\gamma (\u_h^{i+1}-\u_h^i) - \frac{\mu}{\rho} K^{-1}\u_h^{i+1}- \frac{\beta}{\rho} |\u_h^i|\u_h^{i+1}+\f_h(., C_h^i)| |\z_0| \,d\x \\
&+ \int_{\kappa} \big|\f(.,C)-\f_h(.,C)| | \z_0 |\,d\x +
\int_{\kappa} \big| \f_h( ., C) - \f_h(. , C_h^i) \big|| \z_0 |\,d\x  \Big].
\end{aligned}
\end{equation}
By using the relation $\u-\u_h^{i+1}=\z_0 + \v_r$ and applying \eqref{fhlipshitz}, we obtain
\begin{equation}
\begin{aligned} \label{upbdvit7}
& \frac{\mu K_m}{\rho} \| \textbf{z}_0\|^2_{L^2(\Omega)^d}+c_m\left\|\u -\u_h^{i+1}\right\|_{L^3(\Omega)^d}^3 \le \\
& \frac{\mu K_M}{\rho} \| \textbf{v}_r \|_{L^2(\Omega)^d} \, \| \textbf{z}_0 \|_{L^2(\Omega)^d} +  \frac{\beta}{\rho} \| \u \|_{L^6(\Omega)^d} (\| \textbf{z}_0 \|_{L^2(\Omega)^d} + \| \v_r \|_{L^2(\Omega)^d}) \| \v_r \|_{L^3(\Omega)^d} \\
& + \ds \frac{\beta}{\rho} (\| \v_r \|_{L^2(\Omega)^d}  + \| \textbf{z}_0 \|_{L^2(\Omega)^d})
 \|\u^{i+1}_h \|_{L^6(\Omega)^d} \, \| \v_r \|_{L^3(\Omega)^d} \ds + \ds \gamma \| \u_h^{i+1} - \u_h^{i} \|_{L^2(\Omega)^d} \, \| \textbf{z}_0 \|_{L^2(\Omega)^d} \\
 & +  \frac{\beta}{\rho} \| \u^{i+1}_h-\u^i_h \|_{L^\infty (\Omega)^d}   \,
 (\| \v_r \|_{L^2(\Omega)^d}  + \| \textbf{z}_0 \|_{L^2(\Omega)^d})
  \,  \|\textbf{z}_0\|_{L^2(\Omega)^d}  + \frac{\beta}{\rho} \| \u^{i+1}_h-\u^i_h \|_{L^2(\Omega)^d}   \, \|\u \|_{L^\infty (\Omega)^d} \,  \|\textbf{z}_0\|_{L^2(\Omega)^d}\\
  &+ \sum_{\kappa \in  \mathcal{T}_h } \|-\nabla p_h^{i+1}-{\frac{\mu}{\rho}} K^{-1} \u^{i+1}_h - \gamma (\u_h^{i+1} - \u_h^{i}) - \frac{\beta}{\rho} |\u^i_h| \u^{i+1}_h + \f_h(., C_h^i) \|_{L^2(\kappa)} \, \|\textbf{z}_0\|_{L^2(\kappa)}\\
  & + \sum_{\kappa \in \mathcal{T}_h}\|\f(.,C)-\f_h(.,C)\|_{L^2(\kappa)}\|\z_0\|_{L^2(\kappa)} + \sum_{\kappa \in \mathcal{T}_h} c_{\f_1}  \|C-C_h^i\|_{L^2(\kappa)}\|\z_0\|_{L^2(\kappa)}.
\end{aligned}
\end{equation}
Lemma \ref{uborne6} ensures that there exists an integer $i_0$ depending on $h$ such that $\forall i \geq i_0$, $\u_h^{i+1}$ is bounded in $L^6(\Omega)^d$. Furthermore, we shall use the following inverse inequality
\[
\|\u_h^{i+1}-\u_h^i\|_{L^{\infty}(\Omega)^d} \leq c_I h^{-d/2}
\|\u_h^{i+1}-\u_h^i\|_{L^{2}(\Omega)^d},
\]
and the convergence of the sequence $\u_h^i$ to $\u_h$ in $L^2(\Omega)^d$ to deduce that we can chose $i_0$ sufficiently large so that $ \ds \|\u_h^{i+1}-\u_h^i\|_{L^{2}(\Omega)^d} \leq h^{d/2} \frac{\mu K_m}{2\beta c_I}$, and then $ \ds \left\|\u_h^{i+1}-\u_h^{i}\right\|_{L^2(\Omega)^d} \leq \frac{\mu K_m}{2 \beta}$. Therefore, equation \eqref{upbdvit7}  becomes by inserting $C_h^{i+1}$ to the last term
\begin{equation} \begin{aligned} \label{upbdvit8}
& \frac{\mu K_m}{2\rho} \| \textbf{z}_0\|^2_{L^2(\Omega)^d} +c_m\left\|\u -\u_h^{i+1}\right\|_{L^3(\Omega)^d}^3\le \\
& \frac{\mu K_M}{\rho} \| \textbf{v}_r \|_{L^2(\Omega)^d} \, \| \textbf{z}_0 \|_{L^2(\Omega)^d} +  \frac{\beta}{\rho} \| \u \|_{L^6(\Omega)^d} (\| \textbf{z}_0 \|_{L^2(\Omega)^d} + \| \v_r \|_{L^2(\Omega)^d}) \| \v_r \|_{L^3(\Omega)^d} \\
& + \ds \frac{\beta}{\rho} \hat{c_1}(\u,p,C) (\| \v_r \|_{L^2(\Omega)^d}  + \| \textbf{z}_0 \|_{L^2(\Omega)^d})
  \| \v_r \|_{L^3(\Omega)^d} \ds + \ds \gamma \| \u_h^{i+1} - \u_h^{i} \|_{L^2(\Omega)^d} \, \| \textbf{z}_0 \|_{L^2(\Omega)^d} \\
 &+ \frac{\mu K_m}{2\rho} \, \| \v_r \|_{L^2(\Omega)^d}
  \,  \|\textbf{z}_0\|_{L^2(\Omega)^d}
 + \frac{\beta}{\rho} \| \u^{i+1}_h-\u^i_h \|_{L^2(\Omega)^d}   \, \|\u \|_{L^\infty(\Omega)^d}  \,  \|\textbf{z}_0 \|_{L^2(\Omega)^d} \\
 &+ \ds \sum_{\kappa \in  \mathcal{T}_h } \|-\nabla p_h^{i+1}- \frac{\mu}{\rho}
 K^{-1} \u^{i+1}_h - \gamma (\u_h^{i+1} - \u_h^{i}) -\frac{\beta}{\rho} |\u^i_h| \u^{i+1}_h + \f_h(.,C_h^i) \|_{L^2(\kappa)} \, \|\textbf{z}_0\|_{L^2(\kappa)}\\
 &+   \ds \sum_{\kappa \in  \mathcal{T}_h } \| \f(.,C)-\f_h(.,C) \|_{L^2(\kappa)}  \, \|\textbf{z}_0\|_{L^2(\kappa)} +
 \ds \sum_{\kappa \in  \mathcal{T}_h } c_{\f_1}\|C_h^{i+1}-C_h^i\|_{L^2(\kappa)}\|\z_0\|_{L^2(\kappa)} \\
 &+\ds \sum_{\kappa \in  \mathcal{T}_h } c_{\f_1}\|C-C_h^{i+1}\|_{L^2(\kappa)}\|\z_0\|_{L^2(\kappa)} .
\end{aligned} \end{equation}
We use the decomposition $ \ds ab \leq \frac{1}{2\varepsilon}a^2 + \frac{\varepsilon}{2}b^2$ for all the terms containing $\|\z_0\|_{L^2(\Omega)^d}$ in the right-hand side of the previous equation with $\varepsilon$ sufficiently small so that all the terms of $\|\z_0\|_{L^2(\Omega)^d}$ will be dominated  by the left-hand side. Then by using the inequality $\ds \|\v_r\|_{L^2(\Omega)^d} \leq |\Omega|^{1/6} \|\v_r\|_{L^3(\Omega)^d}$, 
the regularity of $\mathcal{T}_h$, and by taking the square root of the inequality, the following bound holds
\begin{equation} \label{util11}
    \begin{aligned}
    \norm{\z_0}_{L^2(\Omega)^d}&+c_m\left\|\u -\u_h^{i+1}\right\|_{L^3(\Omega)^d}^{3/2} \leq
    \Tilde{c} \Big(  \norm{\v_r}_{L^3(\Omega)^d} + \sum_{\kappa \in \mathcal{T}_h} \norm{\u_h^{i+1}-\u_h^i}_{L^2(\kappa)} \\
     +  \sum_{\kappa \in \mathcal{T}_h} \norm{\f(.,C)-\f_h(., C)}_{L^2(\kappa)}+&\hspace{-2mm}\sum_{\kappa \in \mathcal{T}_h}\hspace{-1mm}\|\nabla p_h^{i+1}+ \frac{\mu}{\rho}
 K^{-1} \u^{i+1}_h + \gamma (\u_h^{i+1} - \u_h^{i}) +\hspace{-1mm}\frac{\beta}{\rho} |\u^i_h| \u^{i+1}_h - \f_h(.,C_h^i) \|_{L^2(\kappa)} \\
 & + \sum_{\kappa \in \mathcal{T}_h}||C_h^{i+1}-C_h^i||_{H^1(\kappa)}
    \Big) + \frac{8\rho c_{\f_1}^2}{\mu K_m}S_2^0 |C- C_h^{i+1}|_{H^1(\Omega)},
    \end{aligned}
\end{equation}
where $\tilde{c}$ is a  constant depending on the exact solution.\\
\noindent Thus, we deduce from the relation $\ds \u - \u_h^{i+1}= \z_0 + \v_r$, the triangle inequality $\ds \norm{\u-\u_h^{i+1}}_{L^2(\Om)^d} \leq \norm{\u-\u_h^{i+1}-\v_r}_{L^2(\Om)^d} + \norm{\v_r}_{L^2(\Om)^d} $, using relations \eqref{util11} and \eqref{upbdvit4}, and the following inequality:
\begin{equation} \begin{aligned} \label{tildecut}
\norm{\u - \u_h^{i+1}}_{L^2(\Om)^d} & \leq \Tilde{c} \Big( \sum_{\kappa \in \mathcal{T}_h} \big(
\eta_{\kappa , i}^{(D_3)}+\eta_{\kappa , i}^{(L_1)}+\eta_{\kappa , i}^{(L_2)}+\eta_{\kappa , i}^{(D_2)} + \norm{\f(.,C)-\f_h(.,C)}_{L^2(\kappa)}\big) \Big) \\
&+ \frac{8\rho c_{\f_1}^2}{\mu K_m}S_2^0  |C-C_h^{i+1}|_{H^1(\Omega)}.
\end{aligned}
\end{equation}
When substituted into \eqref{upbdconc}, this estimate for the velocity error gives
\begin{equation} \begin{aligned} \label{concent9}
\Big(\alpha&- \big( S_6^0 |C|_{W^{1,3}(\Omega)}+ \norm{C}_{L^{\infty}(\Omega)}\big)\frac{4\rho c_{\f_1}^2}{\mu K_m}S_2^0  \Big) |C-C_h^{i+1}|_{H^1(\Omega)}
 \leq \Tilde{c_2} \Big( \sum_{\kappa \in \mathcal{T}_h} \big( \eta_{\kappa ,i}^{(D_1)}+\eta_{\kappa ,i}^{(D_2)}+\eta_{\kappa ,i}^{(D_3)}\\
 &+ \eta_{\kappa ,i}^{(L_1)}+\eta_{\kappa ,i}^{(L_2)}+ h_k \norm{g-g_h}_{L^2(\kappa)}+\norm{\f(.,C)-\f_h(.,C)}_{L^2(\kappa)}\big) \Big).
 \end{aligned} \end{equation}
 In view of \eqref{conditionC}, the concentration estimate in \eqref{upperbound1} follows from \eqref{concent9}, and in turn, the velocity estimate follows by substituting \eqref{concent9} in \eqref{tildecut}.

$\hfill\Box$

 \end{proof}
 {\rmq Similarly to Theorem 3.12 in \cite{semaan2}, the last theorem gives an upper bound for the error $\u - \u_h^{i+1}$ in $L^2(\Omega)^d$. But unfortunately, it gives an upper bound of $\| \u - \u_h^{i+1} \|_{L^3(\Omega)^d}$ with the indicators to the power of $2/3$. \\}
 \begin{thm}
 We retain the assumptions of Theorem \ref{thmupbd1}. There exists a positive real number $i_1$ depending on $h$ such that $\forall i \geq i_1$, we have the following bound
 \begin{equation} \begin{aligned} \label{upperbound2}
\norm{\nabla(p-p_h^{i+1})}_{L^{3/2}(\Omega)^d}&\leq
\Tilde{c_1} \Big( \sum_{\kappa \in \mathcal{T}_h} \big( \eta_{\kappa ,i}^{(D_1)}+\eta_{\kappa ,i}^{(D_2)}+\eta_{\kappa ,i}^{(D_3)}
 + \eta_{\kappa ,i}^{(L_1)}\\
 &+\eta_{\kappa ,i}^{(L_2)}
 + h_k \norm{g-g_h}_{L^2(\kappa)}+\norm{\f(.,C)-\f_h(.,C)}_{L^2(\kappa)}\big) \Big).
 \end{aligned}
\end{equation}
 \end{thm}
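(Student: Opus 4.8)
The plan is to exploit the continuous inf-sup condition \eqref{infsup}, which by $L^3$--$L^{3/2}$ duality gives the exact identity
\begin{equation*}
\left\|\nabla(p-p_h^{i+1})\right\|_{L^{3/2}(\Omega)^d} = \sup_{\v \in X} \frac{\ds \int_\Omega \nabla(p-p_h^{i+1})\cdot \v \,d\x}{\left\|\v\right\|_{L^3(\Omega)^d}},
\end{equation*}
valid since $p-p_h^{i+1}\in M$. It therefore suffices to bound the numerator for an arbitrary $\v \in X$. First I would start from the momentum error equation in the form \eqref{upbdvit1} and solve it for $\int_\Omega \nabla(p-p_h^{i+1})\cdot\v\,d\x$, specializing the test function to $\v_h = \0$, so that the last residual contribution drops out while the indicator $\eta_{\kappa,i}^{(D_2)}$ still appears as the $L^2(\kappa)$-norm of its integrand.

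Then I would estimate each of the resulting terms against $\left\|\v\right\|_{L^3(\Omega)^d}$. The viscous term is controlled by $K_M\left\|\u-\u_h^{i+1}\right\|_{L^{3/2}(\Omega)^d}\left\|\v\right\|_{L^3(\Omega)^d}$, hence by $\left\|\u-\u_h^{i+1}\right\|_{L^2(\Omega)^d}$ up to a factor $|\Omega|^{1/6}$. For the Forchheimer contribution I would split $|\u|\u-|\u_h^i|\u_h^{i+1}$ into the genuinely nonlinear part $|\u|\u-|\u_h^{i+1}|\u_h^{i+1}$, estimated by $2(|\u|+|\u_h^{i+1}|)|\u-\u_h^{i+1}|$ and a H\"older inequality with exponents $(6,2,3)$, and the linearization part $(|\u_h^{i+1}|-|\u_h^i|)\u_h^{i+1}$, estimated by $\left\|\u_h^{i+1}-\u_h^i\right\|_{L^2(\Omega)^d}\left\|\u_h^{i+1}\right\|_{L^6(\Omega)^d}\left\|\v\right\|_{L^3(\Omega)^d}$. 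Here Lemma \ref{uborne6} supplies the uniform $L^6$-bound on $\u_h^{i+1}$ for $i$ large, while $\u \in L^\infty(\Omega)^d \subset L^6(\Omega)^d$ by hypothesis. The term $\gamma\int_\Omega(\u_h^{i+1}-\u_h^i)\cdot\v\,d\x$ produces $\eta_{\kappa,i}^{(L_1)}$; the residual term produces $\sum_\kappa \eta_{\kappa,i}^{(D_2)}$ through the discrete Cauchy--Schwarz inequality together with $\left\|\v\right\|_{L^2(\Omega)^d}\le |\Omega|^{1/6}\left\|\v\right\|_{L^3(\Omega)^d}$; the data term yields $\sum_\kappa\left\|\f(.,C)-\f_h(.,C)\right\|_{L^2(\kappa)}$; and the term $\f_h(.,C)-\f_h(.,C_h^i)$ is handled with \eqref{fhlipshitz} after inserting $\pm C_h^{i+1}$, producing $\eta_{\kappa,i}^{(L_2)}$ together with $\left\|C-C_h^{i+1}\right\|_{L^2(\kappa)}$.

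Dividing by $\left\|\v\right\|_{L^3(\Omega)^d}$ and taking the supremum would then give a bound of the form
\begin{equation*}
\left\|\nabla(p-p_h^{i+1})\right\|_{L^{3/2}(\Omega)^d} \le c\Big(\sum_{\kappa\in\mathcal{T}_h}\big(\eta_{\kappa,i}^{(D_2)}+\eta_{\kappa,i}^{(L_1)}+\eta_{\kappa,i}^{(L_2)}+\left\|\f(.,C)-\f_h(.,C)\right\|_{L^2(\kappa)}\big)\Big)+c\big(\left\|\u-\u_h^{i+1}\right\|_{L^2(\Omega)^d}+\left\|C-C_h^{i+1}\right\|_{H^1(\Omega)}\big),
\end{equation*}
and the argument would close by absorbing the last two terms through the velocity/concentration upper bound \eqref{upperbound1} of Theorem \ref{thmupbd1}, which is why the threshold $i_1$ (at least as large as the $i_0$ of that theorem and of Lemma \ref{uborne6}) is needed. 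I expect the main obstacle to be the bookkeeping of the nonlinear Forchheimer terms: one must route every factor into the single available $\left\|\v\right\|_{L^3(\Omega)^d}$ while keeping the remaining factors either as already-estimated error norms or as the listed indicators, and in particular invoke the $L^6$-bounds on both $\u$ and $\u_h^{i+1}$ at exactly the right places.
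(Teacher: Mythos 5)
Your proposal is correct and follows essentially the same route as the paper: test the momentum residual equation \eqref{residuvitesse} with $\v_h=\0$, bound every term against $\left\|\v\right\|_{L^3(\Omega)^d}$ using H\"older, the embedding $\left\|\v\right\|_{L^2(\Omega)^d}\le |\Omega|^{1/6}\left\|\v\right\|_{L^3(\Omega)^d}$ and the $L^6$-bounds from Lemma \ref{uborne6}, then invoke the inf-sup condition \eqref{infsup} and absorb the remaining velocity/concentration errors via Theorem \ref{thmupbd1}. The only (cosmetic) difference is the regrouping of the Forchheimer term: the paper splits it as $(|\u|-|\u_h^i|)\u+|\u_h^i|(\u-\u_h^{i+1})$ and then applies a triangle inequality to $\left\|\u-\u_h^i\right\|_{L^2(\Omega)^d}$, while you split around $\u_h^{i+1}$; both yield exactly the same indicator and error terms.
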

 \begin{proof}
 Let $(\u,p,C)$ and $(\u_h^{i+1},p_h^{i+1},C_h^{i+1})$ be the respective solutions of $(V_a)$ and $(V_{ahi})$. We test equation \eqref{residuvitesse} with $\v_h=\0$ to get
 \begin{equation} \label{residuvit3}
 \begin{aligned}
 \int_{\Omega} \nabla (p-p_h^{i+1})\cdot \v \,d\x
 &=-\frac{\mu}{\rho} \int_{\Omega} K^{-1}(\u-\u_h^{i+1})\cdot \v \,d\x - \frac{\beta}{\rho} \int_{\Omega} (|\u|\u - |\u_h^{i}|\u_h^{i+1})\cdot \v \,d\x\\
 &+\sum_{\kappa \in \mathcal{T}_h} \Big[
\int_{\kappa} \big( - \nabla p_h^{i+1} -\gamma (\u_h^{i+1}-\u_h^i) - \frac{\mu}{\rho} K^{-1}\u_h^{i+1}- \frac{\beta}{\rho} |\u_h^i|\u_h^{i+1}+\f_h(., C_h^i) \big) \cdot \v\,d\x \\
&
+ \gamma \int_{\kappa } (\u_h^{i+1}-\u_h^i)\cdot \v \,d\x + \int_{\kappa} \big(\f(.,C)-\f_h(.,C)\big)\cdot \v \,d\x\\
&+
\int_{\kappa} \big( \f_h( ., C) - \f_h(. , C_h^i) \big)\cdot \v \,d\x
\Big],
\end{aligned}
 \end{equation}
 By using the Cauchy-Shwartz inequality and dividing by $\norm{\v}_{L^3(\Omega)^d}$, we get
 \begin{equation}\label{equa1}
\begin{array}{ll}
\medskip
\ds \frac{\ds \Big| \int_\Omega \nabla (p_h^{i+1} - p) \v \, d\x \Big| }{||\v ||_{L^3(\Omega)^d}} \le  c \ds ( || \u - \u_h^{i+1}||_{L^2(\Omega)^d} +  || \u_h^i - \u_h^{i+1}||_{L^2(\Omega)^d}) \frac{||\v||_{L^2(\Omega)^d}}{||\v||_{L^3(\Omega)^d}} \\
 \ds  + c_1 \Big( \sum_{\kappa \in \mathcal{T}_h} ||\f(.,C) - \f_h(.C)||^2_{L^2(\kappa)} \Big)^{1/2}
 \frac{||\v||_{L^2(\Omega)^d}}{||\v||_{L^3(\Omega)^d}} + \frac{\beta}{\rho} \Big| \int_\Omega (|\u| \u - |\u^i_h| \u^{i+1}_h) \cdot \v \, d\x   \Big| \frac{1}{||\v||_{L^3(\Omega)^d}}\\
 + \ds c_2 \Big( \sum_{\kappa \in \mathcal{T}_h} || -\nabla p_h^{i+1}- \gamma (\u_h^i - \u_h^{i+1}) - \frac{\mu}{\rho}K^{-1} \u^{i+1}_h - \frac{\beta}{\rho} |\u^i_h| \u^{i+1}_h + \f_h(.,C_h^i) ||^2_{L^2(\kappa)}  \Big)^{1/2} \frac{||\v||_{L^2(\Omega)^d}}{||\v||_{L^3(\Omega)^d}}\\
 +\ds c_3 \Big(\sum_{\kappa \in \Gamma_h} \norm{C-C_h^i}_{L^2(\kappa)}^2\Big)^{1/2}\frac{||\v||_{L^2(\Omega)^d}}{||\v||_{L^3(\Omega)^d}}.
\end{array}
\end{equation}
By using the relation $||\v||_{L^2(\Omega)^d} \le |\Omega |^{1/6} ||\v||_{L^3(\Omega)^d}$, all the terms of the right hand side of the previous bound can be treated as in the previous theorem except the third one which can be bounded as following:
\begin{equation}\label{equa2}
\begin{array}{rcl}
\Big| ( |\u| \u - |\u^i_h| \u^{i+1}_h, \v )  \Big|  &\le& \Big|  ( (|\u| - |\u_h^i |) \u, \v)  \Big| +
\Big| (|\u_h^i| (\u - \u_h^{i+1}) , \v)   \Big|\\
&\le& \big(   ||\u - \u^i_h||_{L^2(\Omega)^d} ||\u ||_{L^6(\Omega)^d} + ||\u^i_h||_{L^6(\Omega)^d} ||\u - \u_h^{i+1} ||_{L^2(\Omega)^d}   \big)||\v ||_{L^3(\Omega)^d}.
\end{array}
\end{equation}
We consider equation \eqref{equa1}. By using the following inequality
\[
\norm{\u-\u_h^{i}}_{L^2(\Omega)^d} \leq \norm {\u-\u_h^{i+1}}_{L^2(\Omega)^d}+\norm{\u_h^{i+1}-\u_h^i}_{L^2(\Omega)^d},
\]
the fact that $\norm{\u_h^i}_{L^6(\Omega)^d}$ is bounded, the inf-sup condition \eqref{infsup} and Theorem \ref{thmupbd1}, we get the desired error bound on the pressure.
 \end{proof}
 \begin{Rem}
 The bounds \eqref{upperbound1} and \eqref{upperbound2} constitute our {\it a posteriori} error estimates where we bound the error between the exact solution $(\u,p,C)$ of $(V_a)$ and the numerical solution $(\u_h^{i+1},p_h^{i+1},C_h^{i+1})$ of $(V_{ahi})$ with respect to the indicators $\eta_{\kappa,i}^{(L)}$, $\eta_{\kappa,i}^{(D_1)}$, $\eta_{\kappa,i}^{(D_2)}$ and $\eta_{\kappa,i}^{(D_3)}$.
 But to get the bounds of the indicators which are the subject of the next subsection (Section \ref{optimall}), we need to add the following theorem where we add a supplementary bound giving an error bound of the exact and numerical solutions.
 \end{Rem}
\begin{thm}
Under the assumptions of Lemma \ref{uborne6} and if $\f_0 \in L^2(\Omega)^d$, there exists an integer $i_0$ depending on $h$ such that for all $i\ge i_0$,  the solutions $(\u,p)$ of $(V_a)$ and $(\u_h^{i+1},p_h^{i+1})$ of $(V_{ahi})$ verify the following error inequalities:
\begin{equation} \begin{aligned} \label{upperbound4}
\norm{\big(\frac{\beta}{\rho}(|\u|\u - |\u_h^{i}|\u_h^{i+1})+\nabla (p-p_h^{i+1})\big)}_{L^2(\Omega)^d}&\leq
\Tilde{c_2} \Big( \sum_{\kappa \in \mathcal{T}_h} \big( \eta_{\kappa ,i}^{(D_1)}+\eta_{\kappa ,i}^{(D_2)}+\eta_{\kappa ,i}^{(D_3)}
 + \eta_{\kappa ,i}^{(L_1)}\\
 &+\eta_{\kappa ,i}^{(L_2)}
 + h_k \norm{g-g_h}_{L^2(\kappa)}+\norm{\f(.,C)-\f_h(.,C)}_{L^2(\kappa)}\big) \Big)
 \end{aligned}
\end{equation}
where $\Tilde{c_2}$ is a constant independent of $h$.
\end{thm}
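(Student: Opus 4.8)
The plan is to exploit the velocity residual equation \eqref{residuvitesse} directly, measuring the combined quantity
\[
\w := \frac{\beta}{\rho}\bigl(|\u|\u - |\u_h^i|\u_h^{i+1}\bigr) + \nabla (p-p_h^{i+1})
\]
in $L^2(\Omega)^d$, rather than passing through the inf-sup condition as in the pressure estimate \eqref{upperbound2}. First I would test \eqref{residuvitesse} with $\v_h=\0$ and observe that the second and third terms on its left-hand side are precisely $\int_\Omega \w\cdot\v\,d\x$; moving the remaining $K^{-1}$ term to the right turns the identity into
\[
\int_\Omega \w\cdot\v\,d\x = -\frac{\mu}{\rho}\int_\Omega K^{-1}(\u-\u_h^{i+1})\cdot\v\,d\x + \sum_{\kappa\in\mathcal{T}_h}\int_\kappa R_\kappa\cdot\v\,d\x,
\]
valid for every $\v\in X=L^3(\Omega)^d$, where $R_\kappa$ collects the element residual inside $\eta_{\kappa,i}^{(D_2)}$, the linearisation term $\gamma(\u_h^{i+1}-\u_h^i)$, and the two forcing residuals $\f(.,C)-\f_h(.,C)$ and $\f_h(.,C)-\f_h(.,C_h^i)$ (the contribution carried by $\v_h$ vanishes because $\v_h=\0$).

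Before estimating, I would verify that $\w$ actually lies in $L^2(\Omega)^d$, which is where the additional hypotheses enter: $\u\in L^\infty(\Omega)^d$ gives $|\u|\u\in L^\infty$, Lemma \ref{uborne6} gives $\u_h^i,\u_h^{i+1}\in L^6$ (for $i\ge i_0$) hence $|\u_h^i|\u_h^{i+1}\in L^3\subset L^2$, and $p\in H^2(\Omega)$ together with $p_h^{i+1}$ piecewise affine gives $\nabla (p-p_h^{i+1})\in L^2$; moreover the new assumption $\f_0\in L^2(\Omega)^d$ is exactly what renders the data residual $\f(.,C)-\f_h(.,C)$ square-integrable. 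Since $L^3(\Omega)^d$ is dense in $L^2(\Omega)^d$ on the bounded domain $\Omega$, I would then use the duality characterisation $\|\w\|_{L^2(\Omega)^d}=\sup_{\v\in L^3(\Omega)^d}\frac{\int_\Omega \w\cdot\v\,d\x}{\|\v\|_{L^2(\Omega)^d}}$ and bound the right-hand side of the identity by Cauchy--Schwarz (elementwise, then a discrete Cauchy--Schwarz over $\kappa$). This produces $\tfrac{\mu K_M}{\rho}\|\u-\u_h^{i+1}\|_{L^2(\Omega)^d}$ from the $K^{-1}$ term, $\bigl(\sum_\kappa(\eta_{\kappa,i}^{(D_2)})^2\bigr)^{1/2}$ from the element residual, $\gamma\bigl(\sum_\kappa(\eta_{\kappa,i}^{(L_1)})^2\bigr)^{1/2}$ from the linearisation term, $\bigl(\sum_\kappa\|\f(.,C)-\f_h(.,C)\|_{L^2(\kappa)}^2\bigr)^{1/2}$, and, via \eqref{fhlipshitz}, $c_{\f_1}\bigl(\sum_\kappa\|C-C_h^i\|_{L^2(\kappa)}^2\bigr)^{1/2}$.

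After dividing by $\|\v\|_{L^2(\Omega)^d}$ and taking the supremum, it remains to convert the two genuine error contributions into indicators. I would split $\|C-C_h^i\|_{L^2(\kappa)}\le\|C-C_h^{i+1}\|_{L^2(\kappa)}+\|C_h^{i+1}-C_h^i\|_{L^2(\kappa)}$, recognising the second summand as $\eta_{\kappa,i}^{(L_2)}$, and then invoke Theorem \ref{thmupbd1} to absorb both $\|\u-\u_h^{i+1}\|_{L^2(\Omega)^d}$ and $\|C-C_h^{i+1}\|_{H^1(\Omega)}$ into the right-hand side of \eqref{upperbound1}; this is precisely why the assumptions of Lemma \ref{uborne6}, and the resulting threshold $i_0$, are carried over. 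Collecting all $h$-independent constants into $\tilde{c}_2$ then yields \eqref{upperbound4}.

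The main obstacle I anticipate is not the estimation itself, which is a routine residual plus Cauchy--Schwarz argument, but the bookkeeping that guarantees the whole scheme is legitimate: one must check that $\w\in L^2(\Omega)^d$ so that the $L^2$-duality applies, and one must make sure that the reused bound from Theorem \ref{thmupbd1} controls exactly the residual quantities $\|\u-\u_h^{i+1}\|_{L^2(\Omega)^d}$ and $\|C-C_h^{i+1}\|_{H^1(\Omega)}$ that reappear here. In particular the hypotheses $\f_0\in L^2(\Omega)^d$ and $\u\in L^\infty(\Omega)^d$ cannot be dropped, since they are what upgrade the natural $L^{3/2}$/$L^3$ integrability of the continuous problem to the square-integrability used throughout this estimate.
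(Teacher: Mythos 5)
Your proof is correct and follows essentially the same route as the paper: both isolate the combined quantity $\frac{\beta}{\rho}(|\u|\u - |\u_h^{i}|\u_h^{i+1})+\nabla (p-p_h^{i+1})$ on the left of the residual equation \eqref{residuvitesse} with $\v_h=\0$, bound the right-hand side by Cauchy--Schwarz (absorbing the Lipschitz term via \eqref{fhlipshitz} and the triangle inequality into $\eta_{\kappa,i}^{(L_2)}$), and conclude by invoking Theorem \ref{thmupbd1}. The only cosmetic difference is that the paper tests directly with $\v$ equal to that combined quantity, after observing that $\u\in L^\infty(\Omega)^d$ makes $\nabla p\in L^2(\Omega)^d$ so this choice is admissible, whereas you take a supremum over $\v\in L^3(\Omega)^d$ normalized in $L^2$; these are the same duality argument in two guises.
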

\begin{proof}
Using the fact that $\u \in L^{\infty}(\Omega)^d$, then the first equation of system $(P)$ allows us to get $\nabla p \in L^2(\Omega)^d$. Thus, the velocity error equation \eqref{residuvitesse} is valid for all $\v$ in $L^2(\Omega)^d$ and can be written as:
\begin{equation} \label{residuvitesse1}
\begin{aligned}
& \int_{\Omega} \big(\frac{\beta}{\rho}(|\u|\u - |\u_h^{i}|\u_h^{i+1})+\nabla (p-p_h^{i+1})\big)\cdot \v \,d\x=-\frac{\mu}{\rho} \int_{\Omega} K^{-1}(\u-\u_h^{i+1})\cdot \v \,d\x \\
& +\sum_{\kappa \in \mathcal{T}_h} \Big[
\int_{\kappa} \big( - \nabla p_h^{i+1} -\gamma (\u_h^{i+1}-\u_h^i) - \frac{\mu}{\rho} K^{-1}\u_h^{i+1}- \frac{\beta}{\rho} |\u_h^i|\u_h^{i+1}+\f_h(., C_h^i) \big) \cdot (\v - \v_h)\,d\x \\
&
+ \gamma \int_{\kappa } (\u_h^{i+1}-\u_h^i)\cdot \v \,d\x + \int_{\kappa} \big(\f(.,C)-\f_h(.,C)\big)\cdot \v \,d\x +
\int_{\kappa} \big( \f_h( ., C) - \f_h(. , C_h^i) \big)\cdot \v \,d\x
\\
&
+ \int_{\kappa} \big( \f_h(. , C_h^i) - \f(. , C_h^i) \big) \cdot \v_h \, d\x
\Big],
\end{aligned}
\end{equation}
By taking $\v_h=\0$ and $\ds \v=\big(\frac{\beta}{\rho}(|\u|\u - |\u_h^{i}|\u_h^{i+1})+\nabla (p-p_h^{i+1})\big)$, applying the Cauchy-Schwartz inequality and simplifying by $\left\|\v\right\|_{L^2(\Omega)^d}$, we get the result by applying Theorem \ref{thmupbd1}.
\end{proof}
\subsubsection{Bounds of the indicators}\label{optimall}
\noindent In order to establish the efficiency of the {\it a posteriori} error estimates, we recall the following properties (see R. Verf\"urth,\cite{Verfurth2013}, Chapter 1). For an element $\kappa$ of $\mathcal{T}_h$, we consider the bubble function $\psi_\kappa$ (resp. $\psi_e$ for the face $e$) which is equal to the product of the $d+1$ barycentric coordinates associated with the vertices of $\kappa$ (resp. of the $d$ barycentric coordinates associated with the vertices of $e$). We also consider a lifting operator ${\mathcal{L}}_{e}$ defined on polynomials on $e$ vanishing on $\partial e$ into polynomials on the at most two elements $\kappa$ containing $e$ and vanishing on $\partial \kappa \setminus e $, which is constructed by affine transformation from a fixed operator on the reference element.
\begin{propri}\label{psi}
 Denoting by $Pr(\kappa)$ the space of polynomials of degree smaller than $r$ on $\kappa$. The following properties hold:
\begin{equation}
\forall v \in P_r(\kappa), \qquad
\begin{cases}
c ||v||_{0,\kappa} \le ||v \psi^{1/2}_{\kappa} ||_{0,\kappa}
\le c' ||v||_{0,\kappa}, &\\
|v|_{1,\kappa} \le c h_{\kappa}^{-1} ||v ||_{0,\kappa}.&
\end{cases}
\end{equation}
\end{propri}
\begin{propri} \label{psii} Denoting by $Pr(e)$ the space of polynomials of degree smaller than $r$ on $e$, we have
$$\forall\; v  \in P_r(e),\qquad
c\Vert v \Vert_{0,e}\leq \Vert v\psi_{e}^{1/2}
\Vert_{0,e}\leq c'\Vert  v \Vert_{0,e},$$
and, for all polynomials $v$ in $Pr(e)$ vanishing on $\partial e$, if $\kappa$ is an element which contains $e$,
$$ \Vert {\mathcal{L}}_{e}v \Vert_{0,\kappa}+h_{e}\mid
{\mathcal{L}}_{e}v \mid_{1,\kappa}\leq ch^{1/2}_{e}\Vert  v
\Vert_{0,e}.$$
\end{propri}
Let us start with the concentration errors.
\begin{thm} \label{indicateurc}
Under the assumptions of  Theorem \ref{thm42} and Lemma \ref{uborne6} , for all $\kappa \in \mathcal{T}_h$ we have:
\begin{equation} \label{etaid1}
\eta_{\kappa , i}^{(D_1)} \leq c \big( \left\|\u_h^{i+1}-\u\right\|_{L^2(w_e)} + \left\|C-C_h^{i+1}\right\|_{H^1(w_e)} + \sum _{\kappa \subset w_e}h_{\kappa} \left\|g-g_h\right\|_{L^2(\kappa)} \big)
\end{equation}
where $c$ is a positive constant depending on the exact solution but independent of $h$ and $w_k$.  Moreover, without any assumption we have:
 \begin{equation} \label{etail2}
\eta_{\kappa,i}^{(L_2)} \leq \left\|C-C_h^{i+1}\right\|_{H^1(\kappa)} + \left\|C-C_h^{i}\right\|_{H^1(\kappa)} .
 \end{equation}
\end{thm}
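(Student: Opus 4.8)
The plan is to treat the two assertions separately, as they are of very different natures. The bound \eqref{etail2} on the linearization indicator is immediate: inserting $\pm C$ and applying the triangle inequality gives $\|C_h^i-C_h^{i+1}\|_{H^1(\kappa)}\le\|C-C_h^i\|_{H^1(\kappa)}+\|C-C_h^{i+1}\|_{H^1(\kappa)}$, which is exactly \eqref{etail2} and requires no hypotheses. For the discretization indicator \eqref{etaid1} I would follow the standard residual technique, using the bubble functions of Properties \ref{psi} and \ref{psii}. Write the interior residual $R_\kappa=\alpha\triangle C_h^{i+1}-\u_h^{i+1}\cdot\nabla C_h^{i+1}-\frac12\div\u_h^{i+1}\,C_h^{i+1}-r_0C_h^{i+1}+g_h$ and the jump $J_e=\alpha[\nabla C_h^{i+1}\cdot\n]_e$; since $C_h^{i+1}$ and $\u_h^{i+1}$ are piecewise polynomial, $R_\kappa$ is a polynomial on each $\kappa$, so the bubble machinery applies. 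As $\eta_{\kappa,i}^{(D_1)}$ is precisely $h_\kappa\|R_\kappa\|_{L^2(\kappa)}$ plus the weighted jumps $\frac12\sum_e h_e^{1/2}\|J_e\|_{L^2(e)}$, I bound each piece in turn.

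First I would estimate the interior residual. Testing \eqref{residuconcent} with $S=\psi_\kappa R_\kappa$ (extended by zero outside $\kappa$) and $S_h=\0$, the edge terms drop since $S$ vanishes on $\partial\kappa$, and Property \ref{psi} gives $\|R_\kappa\|_{L^2(\kappa)}^2\le c\int_\kappa R_\kappa S\,d\x$. The left-hand side of \eqref{residuconcent} then expresses $\int_\kappa R_\kappa S$ through the errors: the diffusion and reaction terms produce $|C-C_h^{i+1}|_{H^1(\kappa)}$ and $\|C-C_h^{i+1}\|_{L^2(\kappa)}$; the convective difference is split by the product rule as $(\u-\u_h^{i+1})\cdot\nabla C+\u_h^{i+1}\cdot\nabla(C-C_h^{i+1})$ and bounded by H\"older using $\nabla C\in L^3$ and the $L^6(\Omega)^d$ bound on $\u_h^{i+1}$ from Lemma \ref{uborne6}; and the stabilization term $-\frac12\int_\kappa\div\u_h^{i+1}C_h^{i+1}S$ is rewritten, using $\div\u=0$, as $-\frac12\int_\kappa\div(\u_h^{i+1}-\u)C_h^{i+1}S$ and integrated by parts (boundary term vanishing since $S=0$ on $\partial\kappa$), producing a factor $\|\u-\u_h^{i+1}\|_{L^2(\kappa)}$. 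Using $\|S\|_{L^2(\kappa)}\le c\|R_\kappa\|_{L^2(\kappa)}$ and $|S|_{H^1(\kappa)}\le ch_\kappa^{-1}\|R_\kappa\|_{L^2(\kappa)}$ together with the inverse inequalities \eqref{eq:inversin} to control $\|S\|_{L^3(\kappa)}$ and $\|S\|_{L^6(\kappa)}$, then dividing by $\|R_\kappa\|_{L^2(\kappa)}$ and multiplying by $h_\kappa$, the powers of $h_\kappa$ combine (for both $d=2,3$) into nonnegative exponents, yielding $h_\kappa\|R_\kappa\|_{L^2(\kappa)}\le c(\|\u-\u_h^{i+1}\|_{L^2(\kappa)}+\|C-C_h^{i+1}\|_{H^1(\kappa)}+h_\kappa\|g-g_h\|_{L^2(\kappa)})$ with $c$ depending on $\|\u\|_{L^\infty}$, $\|\nabla C\|_{L^3}$ and $\hat{c}_1$.

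Next comes the jump term. For each interior edge $e=\partial\kappa\cap\partial\kappa'$, I test \eqref{residuconcent} with $S=\mathcal{L}_e(\psi_e J_e)$ and $S_h=\0$; this $S$ is supported in $\omega_e$ and vanishes on every edge except $e$, so Property \ref{psii} gives $\|J_e\|_{L^2(e)}^2\le c\int_e J_e S\,ds$, while the remaining contributions are the already-estimated interior residuals $\int_\kappa R_\kappa S$ and the left-hand side of \eqref{residuconcent} over $\omega_e$. The convective and stabilization parts are handled exactly as before, the integration-by-parts boundary terms on $e$ now canceling between $\kappa$ and $\kappa'$ by the continuity of $\u_h^{i+1}$ and $C_h^{i+1}$. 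Invoking $\|S\|_{L^2(\kappa)}\le ch_e^{1/2}\|J_e\|_{L^2(e)}$ and $|S|_{H^1(\kappa)}\le ch_e^{-1/2}\|J_e\|_{L^2(e)}$ from Property \ref{psii}, dividing by $\|J_e\|_{L^2(e)}$, multiplying by $h_e^{1/2}$, and inserting the interior bound for $\|R_\kappa\|_{L^2(\kappa)}$, bounds $h_e^{1/2}\|J_e\|_{L^2(e)}$ by the local errors over $\omega_e$; summing over the edges of $\kappa$ gives \eqref{etaid1}.

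The main obstacle is the nonlinear convection together with the divergence-stabilization term $\frac12\div\u_h^{i+1}C_h^{i+1}$: these are the only contributions that do not directly read off as error norms, and forcing them to do so requires the product-rule splitting, the use of $\div\u=0$ followed by integration by parts, and the $L^6$ control of the discrete velocity from Lemma \ref{uborne6}. One must then check that the negative powers of $h$ introduced by the inverse inequalities needed to pass from $\|S\|_{L^2}$ to $\|S\|_{L^3}$ and $\|S\|_{L^6}$ are always dominated by the weighting $h_\kappa$ (resp. $h_e^{1/2}$), so that no negative power of $h$ survives for $d=2,3$; this is precisely what makes the constant $c$ depend on the exact solution.
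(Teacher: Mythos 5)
Your bound \eqref{etail2}, your choice of test functions ($S=\psi_\kappa R_\kappa$ on each element, then the lifted bubble $\mathcal{L}_e(\psi_e\,\alpha[\nabla C_h^{i+1}\cdot\n]_e)$ on each face, with $S_h=0$), and your treatment of the diffusion, reaction, data-oscillation and convective terms all coincide with the paper's proof. The gap is in the one term you yourself single out as the main obstacle: the stabilization term. After rewriting it, via $\div\,\u=0$, as $-\frac{1}{2}\int_\kappa\div(\u_h^{i+1}-\u)\,C_h^{i+1}S\,d\x$ and integrating by parts, you get $\frac{1}{2}\int_\kappa(\u_h^{i+1}-\u)\cdot(\nabla C_h^{i+1})\,S\,d\x+\frac{1}{2}\int_\kappa(\u_h^{i+1}-\u)\cdot C_h^{i+1}\,\nabla S\,d\x$, and you assert that H\"older plus the inverse inequalities \eqref{eq:inversin} leave only nonnegative powers of $h_\kappa$ after the final multiplication by $h_\kappa$. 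That assertion is false: the only uniform control on $C_h^{i+1}$ is the $H^1$ bound \eqref{boundC}, so for the second integral H\"older with exponents $(2,6,3)$ gives $\|\u-\u_h^{i+1}\|_{L^2(\kappa)}\|C_h^{i+1}\|_{L^6(\kappa)}\|\nabla S\|_{L^3(\kappa)}$ with $\|\nabla S\|_{L^3(\kappa)}\le c\,h_\kappa^{-d/6-1}\|R_\kappa\|_{L^2(\kappa)}$, so after dividing by $\|R_\kappa\|_{L^2(\kappa)}$ and multiplying by $h_\kappa$ a factor $h_\kappa^{-d/6}$ survives (any other H\"older split, e.g.\ through $\|C_h^{i+1}\|_{L^\infty(\kappa)}$, gives the same loss or worse). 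Likewise the first integral leaves $h_\kappa^{1-d/2}$, which blows up for $d=3$. Hence your constant $c$ is not independent of $h$, and the same defect propagates to your jump estimate, since you treat the stabilization term there ``exactly as before''.

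The paper closes precisely this hole by inserting the interpolant $R_h(C)$: in \eqref{4.44} one writes $C_h^{i+1}=(C_h^{i+1}-R_h(C))+R_h(C)$ before integrating by parts. The $R_h(C)$ contributions are measured in $|R_h(C)|_{W^{1,3}(\kappa)}$ and $\|R_h(C)\|_{L^\infty(\kappa)}$, norms controlled by the exact solution, for which the inverse-inequality losses $h_\kappa^{-d/3}$ and $h_\kappa^{-1}$ are exactly compensated by the explicit weight $h_\kappa$. The $(C_h^{i+1}-R_h(C))$ contributions carry the worse losses $h_\kappa^{-d/2}$ and $h_\kappa^{-d/6-1}$, but they come multiplied by $|C_h^{i+1}-R_h(C)|_{H^1(\kappa)}$ and $\|C_h^{i+1}-R_h(C)\|_{L^6(\kappa)}$, which are $O(h)$: this is \eqref{converg} (valid for $i\ge i_1$, which is exactly where the convergence of the iterates from Theorem \ref{thm42} enters) combined with the {\it a priori} estimate \eqref{estpriori1}, the interpolation estimate \eqref{1.2}, and the uniform regularity \eqref{meshunifregular} to convert $h$ into $h_\kappa$. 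This compensation mechanism is what renders every exponent nonnegative for $d=2,3$; it is also the real reason the theorem carries the hypotheses of Theorem \ref{thm42} and Lemma \ref{uborne6} and the reason $c$ depends on the exact solution. Your proposal never invokes the {\it a priori} error estimates or the convergence of the iterates, so as written it cannot produce an $h$-independent constant.
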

 \begin{proof}
 The bound \eqref{etail2} is obvious using a triangular inequality.
 In order to derive a lower bound for the interior part of $\eta_{\kappa , i}^{(D_1)}$, we start from equation \eqref{residuconcent}. By using that $\div \u=0$ and applying Green's formula, the term
 $\ds -\frac{1}{2}\int_{\Omega}\div \u_h^{i+1}C_h^{i+1}S\,d\x$ can be written as:
 \begin{equation} \begin{aligned} \label{4.44}
 -\frac{1}{2}\int_{\Omega}\div \u_h^{i+1}C_h^{i+1}S\,d\x &=
 -\frac{1}{2}\int_{\Omega}\div (\u_h^{i+1} -\u)C_h^{i+1}S\,d\x \\
 &= -\frac{1}{2}\int_{\Omega}\div (\u_h^{i+1} -\u)(C_h^{i+1}- R_h(C))S\,d\x  -\frac{1}{2}\int_{\Omega}\div (\u_h^{i+1} -\u)R_h(C)S\,d\x \\
 &= \frac{1}{2}\int_{\Omega} (\u_h^{i+1}-\u)\cdot\nabla (C_h^{i+1}-R_h(C))S\,d\x + \frac{1}{2}\int_{\Omega} (\u_h^{i+1}-\u)\cdot\nabla S (C_h^{i+1}-R_h(C))\,d\x\\
 &+\frac{1}{2}\int_{\Omega}(\u_h^{i+1}-\u)\cdot ((\nabla R_h(C))S+(\nabla S) R_h(C))\,d\x.
 \end{aligned} \end{equation}
 In view of \eqref{4.44}, by taking $S_h=0$ and $S=S_{\kappa}$, where in each element $\kappa$, $S_{\kappa}$ is the localizing function
 \begin{equation} \label{skappa}
 S_{\kappa} = \big( \alpha \triangle C_h^{i+1}-\u_h^{i+1}\cdot \nabla C_h^{i+1}-\frac{1}{2}\div \u_h^{i+1}C_h^{i+1}-r_0C_h^{i+1}+g_h \big)\psi_{\kappa}
 \end{equation}
 extended by $0$ outside $\kappa$, hence, equation \eqref{residuconcent} becomes:
 \begin{equation}
 \begin{aligned}
 \label{residuconcent1}
 &\int_{\kappa} (\alpha \triangle C_h^{i+1}-\u_h^{i+1}\cdot \nabla C_h^{i+1}-\frac{1}{2}\div \u_h^{i+1}C_h^{i+1}-r_0C_h^{i+1}+g_h)^2 \psi_{\kappa}\,d\x= \alpha \int_{\kappa}\nabla (C-C_h^{i+1})\cdot \nabla S_{\kappa}\,d\x \\
 &+ r_0\int_{\kappa}(C-C_{h}^{i+1})S_{\kappa}\,d\x+ \int_{\kappa}((\u-\u_{h}^{i+1})\cdot \nabla C)S_{\kappa}\,d\x +
 \int_{\kappa}(\u_h^{i+1}\cdot \nabla (C-C_h^{i+1}))S_{\kappa}\,d\x \\
 & - \int_{\kappa} (g-g_h)S_{\kappa}\,d\x +\frac{1}{2}\int_{\kappa} (\u_h^{i+1}-\u)\cdot\nabla (C_h^{i+1}-R_h(C))S_{\kappa}\,d\x + \frac{1}{2}\int_{\kappa} (\u_h^{i+1}-\u)\cdot\nabla S_{\kappa} (C_h^{i+1}-R_h(C))\,d\x \\
 & +\frac{1}{2}\int_{\kappa}(\u_h^{i+1}-\u)\cdot ((\nabla R_h(C))S_{\kappa}+(\nabla S_{\kappa}) R_h(C))\,d\x.
 \end{aligned}
 \end{equation}
 In order to bound the right-hand-side of the previous equation, we start by bounding the last three terms as following:
 \begin{equation}
     \begin{aligned}
     \label{447}
     &\frac{1}{2}\int_{\kappa} (\u_h^{i+1}-\u)\cdot\nabla (C_h^{i+1}-R_h(C))S_{\kappa}\,d\x + \frac{1}{2}\int_{\kappa} (\u_h^{i+1}-\u)\cdot\nabla S_{\kappa} (C_h^{i+1}-R_h(C))\,d\x \\
 & \qquad +\frac{1}{2}\int_{\kappa}(\u_h^{i+1}-\u)\cdot ((\nabla R_h(C))S_{\kappa}+(\nabla S_{\kappa}) R_h(C))\,d\x \\
    & \ds  \leq \frac{1}{2}\left\|\u-\u_h^{i+1}\right\|_{L^2(\kappa)} \big(
     c_I(6)c_I(3)h_{\kappa}^{-d/2}|C_h^{i+1}-R_h(C)|_{H^1(\kappa)} +c_I(3)c_L h_{\kappa}^{(-d-6)/6}\left\|C_h^{i+1}-R_h(C)\right\|_{L^6(\kappa)} \\
     & \qquad \ds + c_I(6)h_{\kappa}^{-d/3}|R_h(C)|_{W^{1,3}(\kappa)} + c_L h_{\kappa}^{-1}\left\|R_h(C)\right\|_{L^{\infty}(\kappa)}
     \big) \left\|S_{\kappa}\right\|_{L^2(\kappa)}
     \\
     & \ds \leq \frac{1}{2}\left\|\u-\u_h^{i+1}\right\|_{L^2(\kappa)} \big(
     c_I(6)c_I(3)h_{\kappa}^{-d/2} ( |C_h^{i+1}-C_h|_{H^1(\kappa)} + |C_h-C|_{H^1(\kappa)} + |R_h(C)-C|_{H^1(\kappa)} ) \\
     & \ds \qquad +c_I(3)c_L h_{\kappa}^{(-6-d)/6} ( \left\|C_h^{i+1}-C_h\right\|_{L^6(\kappa)} + \left\|C_h- C\right\|_{L^6(\kappa)} + \left\|R_h(C)-C\right\|_{L^6(\kappa)} )
     \\
     & \qquad +\ds  c_I(6)h_{\kappa}^{-d/3}|R_h(C)|_{W^{1,3}(\kappa)} + c_L h_{\kappa}^{-1}\left\|R_h(C)\right\|_{L^{\infty}(\kappa)}
     \big) \left\|S_{\kappa}\right\|_{L^2(\kappa)}.
     \end{aligned}
 \end{equation}
 Following Theorem \ref{thm42} and as the sequence $C_h^{i+1}$ converges strongly to $C_h$ in $H^1(\Omega)$ so there exists an integer $i_1$ depending on $h$ such that for all $i\geq i_1$, we have
 \begin{equation} \label{converg} |C_h^{i+1}-C_h|_{H^1(\kappa)} \leq h  \qquad \text{ and } \qquad \left\|C_h^{i+1}-C_h\right\|_{L^6(\kappa)} \leq S_6^0h.
 \end{equation}
 Thus, using \eqref{converg}, the {\it{a priori}} error estimates, the regularity properties of the operator $R_h$ and the fact that the mesh is uniformly regular, we get:
 \begin{equation}
     \begin{aligned}
     \label{448}
     &\frac{1}{2}\int_{\kappa} (\u_h^{i+1}-\u)\cdot\nabla (C_h^{i+1}-R_h(C))S_{\kappa}\,d\x + \frac{1}{2}\int_{\kappa} (\u_h^{i+1}-\u)\cdot\nabla S_{\kappa} (C_h^{i+1}-R_h(C))\,d\x \\
 & +\frac{1}{2}\int_{\kappa}(\u_h^{i+1}-\u)\cdot ((\nabla R_h(C))S_{\kappa}+(\nabla S_{\kappa}) R_h(C))\,d\x \\
    &  \leq \tilde{c}_1\left\|\u-\u_h^{i+1}\right\|_{L^2(\kappa)} \big( h_{\kappa}^{(2-d)/2} + h_{\kappa}^{-d/6} + h_{\kappa}^{-d/3} + h_{\kappa}^{-1} \big) \left\|S_{\kappa}\right\|_{L^2(\kappa)}
    \end{aligned} \end{equation}
    where $\tilde{c}_1$ is a positive constant depending on the exact solution but independent of $h$. \\
 In view of \eqref{448}, using Holder inequality ,the inverse inequalities and Lemma \ref{uborne6}, the left-hand-side of \eqref{residuconcent1} can be bounded as follows:
 \begin{equation}
     \begin{aligned} \label{etaid11}
     & \int_{\kappa} (\alpha \triangle C_h^{i+1}-\u_h^{i+1}\cdot \nabla C_h^{i+1}-\frac{1}{2}\div \u_h^{i+1}C_h^{i+1}-r_0C_h^{i+1}+g_h)^2 \psi_{\kappa}\,d\x \\
     & \leq \big[\alpha c_L h_{\kappa}^{-1} \left\|C-C_h^{i+1}\right\|_{H^1(\kappa)}+ r_0 \left\|C-C_h^{i+1}\right\|_{H^1(\kappa)} + c_I(6)h_{\kappa}^{-d/3}\left\|\u -\u_h^{i+1}\right\|_{L^2(\kappa)}|\nabla C|_{W^{1,3}(\kappa)} \\
     &+ \hat{c}(\u,p,C) c_I(3) h_{\kappa}^{-d/6}\left\|C-C_h^{i+1}\right\|_{H^1(\kappa)}
     + \left\|g-g_h\right\|_{L^2(\kappa)} \\
     &+ \tilde{c}_1( h_{\kappa}^{(2-d)/2} + h_{\kappa}^{-d/6} + h_{\kappa}^{-d/3} + h_{\kappa}^{-1}
     )
     \left\|\u-\u_h^{i+1}\right\|_{L^2(\kappa)}
     \big] \left\|S_{\kappa}\right\|_{L^2(\kappa)}.
     \end{aligned}
 \end{equation}
 Finally, we get the bound for the first part of $\eta_{\kappa , i}^{(D_1)} $ by using Property \ref{psi} and multiplying the previous inequality by $h_{\kappa}$. \\
 \noindent Finally, we estimate the surface part of $\eta_{\kappa , i}^{(D_1)} $ by testing \eqref{residuconcent} with $S_h=0$ and $S=S_e$ where $S_e$ is the localizing function defined by
 \begin{equation*}
S_{e}=
\left \{
\begin{array}{lcl}
\mathcal{L}_{e} \big( \alpha [\nabla C_h^{i+1} \cdot \n]_{e} \psi_e
  \big)& \hspace{-0.5cm} \mbox{on }  \kappa \cup \kappa' , \\
 & \hspace{-0.3cm}\mbox{ on } \Omega \backslash  (\kappa \cup \kappa') , \\
\end{array}
 \right.
 \end{equation*}
and $\kappa$ and $\kappa '$ are the two elements adjacent to $e$. Then \eqref{residuconcent} reduces to
\begin{equation} \begin{aligned} \label{residuconc3}
&\alpha \int_{e}[\nabla C_{h}^{i+1}\cdot \n]_e^2 \psi_e \,ds \\
&=\int_{\kappa \cup \kappa '} (\alpha \triangle C_h^{i+1}-\u_h^{i+1}\cdot \nabla C_h^{i+1}-\frac{1}{2}\div \u_h^{i+1}C_h^{i+1}-r_0C_h^{i+1}+g_h) S_e\,d\x + \int_{\kappa \cup \kappa '} (g-g_h)S_e\,d\x \\
&-r_0 \int_{\kappa \cup \kappa '}(C-C_h^{i+1})S_e\,d\x - \alpha \int_{\kappa \cup \kappa '} \nabla (C-C_h^{i+1})\cdot\nabla S_e\,d\x + \frac{1}{2}\int_{\kappa \cup \kappa '} \div \u_h^{i+1}C_h^{i+1}S_e\,d\x\\
& +\int_{\kappa \cup \kappa '}( \u_h^{i+1}\cdot \nabla (C_h^{i+1}-C))S_e\,d\x +\int_{\kappa \cup \kappa '} ((\u_h^{i+1} -\u)\cdot \nabla C)S_e\,d\x.
\end{aligned}
\end{equation}
In view of the continuity properties of $\mathcal{L}_e$ in Property \ref{psii}, a bound for the above left-hand side is derived by the same arguments; for instance, by combining it with \eqref{eq:inversin}, we have on the elements $\kappa$ sharing $e$:
\begin{equation*}
    \left\|\mathcal{L}_e(v)\right\|_{L^6(\kappa)} \leq c c_I(6)h_{\kappa}^{-d/3}h_e^{1/2}\left\|v\right\|_{L^2(e)}.
\end{equation*}
Thus, by applying \eqref{etaid11}, we obtain
\begin{equation} \label{etaid12}
h_e^{\frac{1}{2}}\left\|\alpha[\nabla C_h^{i+1}\cdot \n]_e \right\|_{L^2(e)} \leq c \big( \left\|\u-\u_h^{i+1}\right\|_{L^2(\kappa \cup \kappa ')} + |C-C_h^{i+1}|_{H^1(\kappa \cup \kappa ')} + h_e \left\|g-g_h\right\|_{L^2(\kappa \cup \kappa ')} \big)
\end{equation}
and thus, we get the desired result.

$\hfill\Box$

 \end{proof}
Now, we turn to the velocity error indicators.
\begin{thm} Let $(\u,p,C)$ and $(\u_h^{i+1},p_h^{i+1},C_h^{i+1})$ be the respective solutions to problems $(V_a)$ and $(V_{ahi})$. We have the following bounds of the indicators: for each element $\kappa \in \Gamma_h$,
\begin{equation} \label{etail1}
    \eta_{\kappa,i}^{(L_1)} \leq \left\|\u-u_h^i\right\|_{L^2(\kappa)}+\left\|\u-\u_h^{i+1}\right\|_{L^2(\kappa)}
\end{equation}
and
\begin{equation} \label{etaid3}
\eta_{\kappa,i}^{(D_3)} \leq c\left\|\v_r\right\|_{L^3(w_k)}
\end{equation}
where $c$ is a positive constant independent of $h$.
\end{thm}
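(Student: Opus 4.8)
The first bound \eqref{etail1} is immediate: writing $\u_h^{i+1}-\u_h^i=(\u_h^{i+1}-\u)+(\u-\u_h^i)$ and applying the triangle inequality gives $\eta_{\kappa,i}^{(L_1)}\le\|\u-\u_h^{i+1}\|_{L^2(\kappa)}+\|\u-\u_h^i\|_{L^2(\kappa)}$, with no extra hypothesis needed.

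For \eqref{etaid3} the plan is to localise the defining relation \eqref{upbdvit2} of $\v_r$ with bubble and lifting test functions. First I would record a cleaner form of that relation: choosing $q_h=0$ in \eqref{residupression} and using the orthogonality \eqref{perpond} (which yields $\int_\Omega\nabla q\cdot\v_r\,d\x=\int_\Omega\nabla q\cdot(\u-\u_h^{i+1})\,d\x$ for all $q\in M$), one gets, for every $q\in M$,
\[
\int_\Omega\nabla q\cdot\v_r\,d\x=\sum_{\kappa\in\mathcal{T}_h}\int_\kappa q\,\div\u_h^{i+1}\,d\x-\sum_{\kappa\in\mathcal{T}_h}\sum_{e\in\partial\kappa}\int_e\phi_{h,1}^e\,q\,ds.
\]
Because $\u_h^{i+1}$ is continuous across interior faces, the total divergence equals the total face flux, so adding a constant to $q$ leaves the right-hand side unchanged; this permits the free use of mean-corrected ($L^2_0$) bubble functions.

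To bound the volumic part of $\eta_{\kappa,i}^{(D_3)}$, I would insert $q_\kappa=(\div\u_h^{i+1})\psi_\kappa$, extended by zero outside $\kappa$ and shifted to zero mean. As $q_\kappa$ vanishes on $\partial\kappa$ the face terms drop and the identity reduces to $\int_\kappa\nabla q_\kappa\cdot\v_r\,d\x=\int_\kappa(\div\u_h^{i+1})^2\psi_\kappa\,d\x$. Property \ref{psi} bounds the latter below by $c\|\div\u_h^{i+1}\|_{L^2(\kappa)}^2$ and gives $\|\nabla q_\kappa\|_{L^2(\kappa)}\le c\,h_\kappa^{-1}\|\div\u_h^{i+1}\|_{L^2(\kappa)}$, so a Cauchy-Schwarz step yields $h_\kappa\|\div\u_h^{i+1}\|_{L^2(\kappa)}\le c\|\v_r\|_{L^2(\kappa)}$. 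Passing from $L^2$ to $L^3$ with the inverse inequality \eqref{eq:inversin} for $p=3$ and the H\"older bound $\|\v_r\|_{L^2(\kappa)}\le|\kappa|^{1/6}\|\v_r\|_{L^3(\kappa)}$, all powers of $h_\kappa$ cancel and I obtain $h_\kappa\|\div\u_h^{i+1}\|_{L^3(\kappa)}\le c\|\v_r\|_{L^3(\kappa)}$.

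For the face part, for each $e\in\partial\kappa$ I would take $q_e=\mathcal{L}_e(\phi_{h,1}^e\psi_e)$, supported in $\omega_e$ and mean-corrected. Only the face $e$ survives in the face sum, so the identity becomes $\int_e(\phi_{h,1}^e)^2\psi_e\,ds$ on one side and, on the other, the $\v_r$ term together with the volumic terms $\sum_{\kappa'\subset\omega_e}\int_{\kappa'}q_e\,\div\u_h^{i+1}\,d\x$. Property \ref{psii} bounds the face integral below by $c\|\phi_{h,1}^e\|_{L^2(e)}^2$ and controls $\|q_e\|_{L^2}$ and $\|\nabla q_e\|_{L^2}$ through $h_e^{1/2}\|\phi_{h,1}^e\|_{L^2(e)}$, while the volumic terms are absorbed by the divergence estimate just proved. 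This produces $h_e^{1/2}\|\phi_{h,1}^e\|_{L^2(e)}\le c\|\v_r\|_{L^2(\omega_e)}$, which I convert to $h_e^{1/3}\|\phi_{h,1}^e\|_{L^3(e)}\le c\|\v_r\|_{L^3(w_\kappa)}$ using the face and element inverse inequalities in \eqref{eq:inversin} and H\"older, the $h$-powers again cancelling. Summing over $e\in\partial\kappa$ and combining with the volumic estimate gives \eqref{etaid3}. I expect this last face estimate to be the main obstacle: it couples the face residual to the element divergence through the lifting operator, and reproducing the exact $L^3$ scaling ($h_\kappa$ and $h_e^{1/3}$) of the indicator from the $L^2$-based Properties \ref{psi} and \ref{psii} requires careful bookkeeping of the powers of $h$.
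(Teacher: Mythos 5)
Your proposal is correct and follows essentially the same route as the paper: triangle inequality for $\eta_{\kappa,i}^{(L_1)}$, then localization of the relation defining $\v_r$ (i.e.\ \eqref{residupression} with \eqref{perpond}) using the element bubble test function $(\div \u_h^{i+1})\psi_\kappa$ for the volumic part and the lifted face bubble $\mathcal{L}_e\big(\phi_{h,1}^e\psi_e\big)$ for the face part, with Properties \ref{psi} and \ref{psii}, the inverse inequalities \eqref{eq:inversin} and the H\"older bound $\|\v_r\|_{L^2(\kappa)}\le |\kappa|^{1/6}\|\v_r\|_{L^3(\kappa)}$ producing exactly the $h_\kappa$ and $h_e^{1/3}$ scalings of $\eta_{\kappa,i}^{(D_3)}$. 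The only departures are cosmetic and in your favor: you absorb the divergence term at the $L^2$ level before passing to $L^3$ (the paper does it after), and you explicitly justify that the bubble test functions may be mean-corrected to lie in $M=W^{1,3/2}(\Omega)\cap L^2_0(\Omega)$ via the flux-balance invariance of the right-hand side, a point the paper passes over in silence.
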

\begin{proof}
The bound \eqref{etail1} is a simple consequence of the definition of $\eta_{\kappa,i}^{(L_1)}$ and a triangle inequality.
In order to prove \eqref{etaid3}, we consider first Equation \eqref{residupression} with $q_h=0$ and
\[
q=q_\kappa = \left\{
\begin{array}{lcl}
(\div \u_h^{i+1}) \psi_\kappa & \mbox{on } \kappa, \\
0 & \mbox{on } \Omega \backslash \kappa,
\end{array}
\right.
\]
where $\psi_\kappa$ is the bubble functions on a given element $\kappa \in \mathcal{T}_h$. We obtain by using Relation \eqref{perpond} the following equation:
\begin{equation} \int_{\kappa} (\div \u_h^{i+1})^2 \psi_\kappa \,d\x = \int_{\kappa} \nabla q_k \cdot \v_r\,d\x.
\end{equation}
Then we use property \ref{psi}, the Cauchy-Schwartz inequality and the relation $\left\|\v\right\|_{L^2(\kappa)} \leq |\kappa|^{1/6}\left\|\v\right\|_{L^3(\kappa)} \leq h_{\kappa}^{d/6}\left\|\v\right\|_{L^3(\kappa)} $:
\begin{equation} \begin{aligned}
 \left\|\div \u_h^{i+1}\right\|_{L^2(\kappa)} &\leq ch_{\kappa}^{-1}\left\|\v_r\right\|_{L^2(\kappa)} \\
& \leq ch_{\kappa}^{(-6+d)/6}\left\|\v_r\right\|_{L^3(\kappa)}.
\end{aligned}
\end{equation}
Then we get by using the first inverse inequality \eqref{eq:inversin} with $p=3$ and by multiplying by $h_{\kappa}$
\begin{equation} \label{hkdiv}
h_{\kappa} \left\|\div \u_h^{i+1}\right\|_{L^3(\kappa)} \leq c_2 \left\|\v_r\right\|_{L^3(\kappa)}
\end{equation}
which corresponds to the first divergence part of the indicator $\eta_{\kappa , i}^{(D_3)}$.\\
Again,  we consider Equation \eqref{residupression} with $q_h=0$ and
\begin{equation*}
q=q_{e}=
\left \{
\begin{array}{lcl}
\mathcal{L}_{e,\kappa } \big(
 \phi_{h,1}^e \psi_{e}  \big)& \hspace{-0.5cm} \mbox{on } \{ \kappa, \kappa' \}, \\
0 & \hspace{-0.3cm}\mbox{ on } \Omega \backslash  (\kappa \cup \kappa') , \\
\end{array}
 \right.
\end{equation*}
\noindent where $\psi_e$ is the bubble function of $e$ and $\kappa'$ denotes the other element of $\mathcal{T}_h$ that share  $e$ with $\kappa$. We get the following equation:
\begin{equation*}
\ds \int _{e} (\phi_{h,1}^e)^2 \psi_e\,d\s
= \ds \int_{\kappa \cup \kappa'} \div \u_h^{i+1}q_e\,d\x - \int_{\kappa \cup \kappa'} \nabla q_e\cdot \v_r  \,d\x
\end{equation*}
Properties \ref{psi} and \ref{psii} allow us to get the following bound:
\begin{equation*}
    \left\|\phi_{h,1}^e\right\|_{L^2(e)} \leq c\big(h_e^{1/2}\left\|\div \u_h^{i+1}\right\|_{L^2(\kappa \cup \kappa ')} + h_e^{-1/2}\left\|\v_r\right\|_{L^2(\kappa \cup \kappa ')} \big).
\end{equation*}
By using the second inverse inequality \ref{eq:inversin}, the relation $\| \v_h \|_{L^2(\kappa)} \le |\kappa|^{1/6} \| \v_h \|_{L^3(\kappa)}$ and that the family of triangulation is uniformly regular, we obtain the bound:
\begin{equation} \label{etaid32}
    h_e^{1/3}\left\|\phi_{h,1}^e\right\|_{L^3(e)} \leq c\big(h_e \left\|\div \u_h^{i+1}\right\|_{L^3(\kappa \cup \kappa ')}+\left\|\v_r\right\|_{{L^3(\kappa \cup \kappa ')}}\big).
\end{equation}
Hence, we bound the part of $\eta_{\kappa,i}^{(D_3)}$ corresponding to $\phi_{h,1}^e$. Relations \eqref{hkdiv} and \eqref{etaid32} give Relation \eqref{etaid3}.

$\hfill \Box$

\end{proof}
\begin{thm} \label{etaid2}
Under the assumptions of Lemma \ref{uborne6}, we have the following bound
\begin{equation} \begin{aligned}
\eta_{\kappa,i}^{(D_2)} & \leq \hat{c} \big(  \eta_{\kappa,i}^{(L_1)} + \| \u - \u_h^{i+1} \|_{L^2(w_\kappa)} + \|  \frac{\beta}{\rho}
\big( |\u| \u - |\u_h^{i+1}| \u_h^{i+1}  \big) + \nabla ( p-p_h^{i+1} )\|_{L^2(\kappa)} \\
&+ \left\|\f(.,C)-\f_h(.C)\right\|_{L^2(w_{\kappa})} + \| K^{-1} - K_h^{-1} \|_{L^{3}(w_\kappa)} + c_{\f_1}\left\|C-C_h^{i+1}\right\|_{H^1(w_{\kappa})}
\end{aligned}
\end{equation}
where $\hat{c}$ is a constant independent of the mesh step but depends on the exact solution $(\u,p)$ and $K_h^{-1}$ is an approximation of $K^{-1}$ which is a constant tensor in each triangle.
\end{thm}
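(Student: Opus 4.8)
The plan is to prove this efficiency estimate by the standard residual/bubble-function technique of Verf\"urth, localized on each element $\kappa$. Write the volumic residual as $R_\kappa := -\nabla p_h^{i+1}-\gamma(\u_h^{i+1}-\u_h^i)-\frac{\mu}{\rho}K^{-1}\u_h^{i+1}-\frac{\beta}{\rho}|\u_h^i|\u_h^{i+1}+\f_h(.,C_h^i)$, so that $\eta_{\kappa,i}^{(D_2)}=\|R_\kappa\|_{L^2(\kappa)}$. Since $K^{-1}$ is not piecewise polynomial, I first replace it by its elementwise constant approximation $K_h^{-1}$ and set $R_{h,\kappa}:=R_\kappa+\frac{\mu}{\rho}(K^{-1}-K_h^{-1})\u_h^{i+1}$, which is a genuine polynomial on $\kappa$ (recall that $\u_h^{i+1}$, $p_h^{i+1}$ and $\f_h(.,C_h^i)$ are polynomial or constant there). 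By the triangle inequality $\eta_{\kappa,i}^{(D_2)}\le\|R_{h,\kappa}\|_{L^2(\kappa)}+\frac{\mu}{\rho}\|(K^{-1}-K_h^{-1})\u_h^{i+1}\|_{L^2(\kappa)}$, where the second term is controlled by H\"older ($\tfrac12=\tfrac13+\tfrac16$) together with Lemma \ref{uborne6}: $\frac{\mu}{\rho}\|K^{-1}-K_h^{-1}\|_{L^3(\kappa)}\|\u_h^{i+1}\|_{L^6(\kappa)}\le c\,\|K^{-1}-K_h^{-1}\|_{L^3(w_\kappa)}$, which is exactly one of the terms on the right-hand side. It therefore remains to bound $\|R_{h,\kappa}\|_{L^2(\kappa)}$.

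To this end I take the test function $\v=R_{h,\kappa}\psi_\kappa$, extended by zero outside $\kappa$, together with $\v_h=\0$ in the velocity residual equation \eqref{residuvitesse}. Since $\v$ is supported in $\kappa$ the sum over elements collapses to the single element, and Property \ref{psi} gives $c\|R_{h,\kappa}\|_{L^2(\kappa)}^2\le\int_\kappa R_{h,\kappa}^2\psi_\kappa=\int_\kappa R_{h,\kappa}\cdot\v$. Substituting $R_{h,\kappa}=R_\kappa+\frac{\mu}{\rho}(K^{-1}-K_h^{-1})\u_h^{i+1}$ and using \eqref{residuvitesse} to rewrite $\int_\kappa R_\kappa\cdot\v$, I obtain that $\int_\kappa R_{h,\kappa}\cdot\v$ equals the three error terms $\frac{\mu}{\rho}\int_\kappa K^{-1}(\u-\u_h^{i+1})\cdot\v+\frac{\beta}{\rho}\int_\kappa(|\u|\u-|\u_h^i|\u_h^{i+1})\cdot\v+\int_\kappa\nabla(p-p_h^{i+1})\cdot\v$ minus the computable terms $\gamma\int_\kappa(\u_h^{i+1}-\u_h^i)\cdot\v$, $\int_\kappa(\f(.,C)-\f_h(.,C))\cdot\v$ and $\int_\kappa(\f_h(.,C)-\f_h(.,C_h^i))\cdot\v$, plus the $(K^{-1}-K_h^{-1})$ contribution. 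Each term is then estimated by Cauchy--Schwarz, pulling out $\|\v\|_{L^2(\kappa)}\le c\|R_{h,\kappa}\|_{L^2(\kappa)}$, after which I divide by $\|R_{h,\kappa}\|_{L^2(\kappa)}$. The $K^{-1}$ term gives $\|\u-\u_h^{i+1}\|_{L^2(\kappa)}$ through \eqref{KmM}, the data term yields $\|\f(.,C)-\f_h(.,C)\|_{L^2(\kappa)}$, the term $\int_\kappa(\f_h(.,C)-\f_h(.,C_h^i))\cdot\v$ yields $c_{\f_1}\|C-C_h^i\|_{L^2(\kappa)}$ by the Lipschitz bound \eqref{fhlipshitz} (turned into $c_{\f_1}\|C-C_h^{i+1}\|_{H^1(w_\kappa)}$ after inserting $\pm C_h^{i+1}$ and absorbing the vanishing iteration increment guaranteed by Theorem \ref{thm42}), and the $\gamma$ term is absorbed into the $\eta_{\kappa,i}^{(L_1)}$ contribution.

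The delicate point, which I expect to be the main obstacle, is the cubic Forchheimer contribution $\frac{\beta}{\rho}\int_\kappa(|\u|\u-|\u_h^i|\u_h^{i+1})\cdot\v$, which must be split so that no inverse power of $h$ survives. I write $|\u|\u-|\u_h^i|\u_h^{i+1}=(|\u|\u-|\u_h^{i+1}|\u_h^{i+1})+(|\u_h^{i+1}|-|\u_h^i|)\u_h^{i+1}$. The first piece is combined with $\int_\kappa\nabla(p-p_h^{i+1})\cdot\v$ to produce exactly the term $\|\frac{\beta}{\rho}(|\u|\u-|\u_h^{i+1}|\u_h^{i+1})+\nabla(p-p_h^{i+1})\|_{L^2(\kappa)}$. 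For the second, using $\big||\u_h^{i+1}|-|\u_h^i|\big|\le|\u_h^{i+1}-\u_h^i|$ and $\u_h^{i+1}=\u+(\u_h^{i+1}-\u)$, I bound the $\u$-part by $\frac{\beta}{\rho}\|\u\|_{L^\infty(\Omega)^d}\,\eta_{\kappa,i}^{(L_1)}\|\v\|_{L^2(\kappa)}$ (this is where the hypothesis $\u\in L^\infty$ enters), and the remaining part by $\frac{\beta}{\rho}\|\u_h^{i+1}-\u_h^i\|_{L^\infty(\kappa)}\|\u-\u_h^{i+1}\|_{L^2(\kappa)}\|\v\|_{L^2(\kappa)}$; invoking the inverse inequality \eqref{eq:inversin} together with the choice of $i_0$ already used for Lemma \ref{uborne6}, so that $\|\u_h^{i+1}-\u_h^i\|_{L^\infty(\kappa)}$ stays bounded by a fixed constant, this yields the term $\|\u-\u_h^{i+1}\|_{L^2(w_\kappa)}$. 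Collecting all contributions and using the uniform regularity of $\mathcal{T}_h$ produces the asserted bound with a constant $\hat c$ depending only on the exact solution. The real difficulty is precisely this book-keeping of the cubic term: choosing the H\"older exponents and the iteration index so that the $L^6$ bound of Lemma \ref{uborne6} and the $L^\infty$ control of $\u$ and of the increment $\u_h^{i+1}-\u_h^i$ jointly remove every negative power of $h$.
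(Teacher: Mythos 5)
Your proposal is sound and, for most of its length, it coincides with the paper's own proof: the replacement of $K^{-1}$ by the piecewise-constant $K_h^{-1}$, the bubble-localized test function $\v=R_{h,\kappa}\psi_\kappa$ with $\v_h=\0$ in \eqref{residuvitesse}, the term-by-term Cauchy--Schwarz estimates with Property \ref{psi}, the H\"older/Lemma \ref{uborne6} treatment of $(K^{-1}-K_h^{-1})\u_h^{i+1}$, and the triangle inequality relating the $K^{-1}$- and $K_h^{-1}$-residuals (the paper performs this conversion at the end, in \eqref{aux11}, you do it at the start; this is immaterial). The genuine difference is the point you yourself single out as delicate: the cubic term. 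The paper never decomposes $|\u|\u-|\u_h^{i}|\u_h^{i+1}$; it keeps $\frac{\beta}{\rho}\big(|\u|\u-|\u_h^{i}|\u_h^{i+1}\big)+\nabla(p-p_h^{i+1})$ as one block and applies Cauchy--Schwarz to it directly --- this is exactly why the preceding theorem establishing \eqref{upperbound4} was proved, since that block is then itself controlled by the indicators; no $L^\infty$ information is needed anywhere in the paper's argument. (As a consequence the paper's proof produces the block with $|\u_h^{i}|\u_h^{i+1}$, consistent with \eqref{upperbound4}, whereas the theorem's statement displays $|\u_h^{i+1}|\u_h^{i+1}$; your decomposition proves the statement as literally written, so it in effect repairs an internal inconsistency of the paper.) The price of your route is twofold. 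First, you need $\u\in L^\infty(\Omega)^d$, which is a hypothesis of Theorem \ref{thmupbd1} but \emph{not} of Lemma \ref{uborne6}, and the theorem grants you only the latter's assumptions. Second, your claim that the $i_0$ ``already used for Lemma \ref{uborne6}'' keeps $\|\u_h^{i+1}-\u_h^i\|_{L^\infty(\Omega)^d}$ bounded is not quite right: that $i_0$ only gives $\|\u_h^{i+1}-\u_h\|_{L^2(\Omega)^d}\le h^{d/3}$, so the inverse inequality \eqref{eq:inversin} still leaves a factor $h^{-d/6}$; you must enlarge $i_0$ so that the increment is $O(h^{d/2})$ in $L^2(\Omega)^d$, which is precisely the device used in the proof of Theorem \ref{thmupbd1} and is legitimate by the convergence in Theorem \ref{thm42}. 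With these two repairs your argument closes, at the cost of slightly stronger premises than the paper's proof requires. A last remark: you share two glosses with the paper --- the silent passage from $c_{\f_1}\|C-C_h^{i}\|_{L^2(\kappa)}$, which is what \eqref{fhlipshitz} actually gives, to $c_{\f_1}\|C-C_h^{i+1}\|_{H^1(w_\kappa)}$ (the leftover increment cannot truly be ``absorbed''; the honest bound carries $\eta_{\kappa,i}^{(L_2)}$ on the right-hand side), and the application of Property \ref{psi} to a residual that is not a polynomial because of the factor $|\u_h^i|$.
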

\begin{proof}
Let us now prove Relation \eqref{etaid2}. We consider Equation \eqref{residuvitesse} with $\v_h=0$ and
\begin{equation*} \v=\v_{\kappa}=
\left \{
\begin{array}{lcl}
\ds \big( -\nabla p_h^{i+1}- \gamma
(\u_h^{i+1} - \u_h^{i}) -
\frac{\mu}{\rho}
K_h^{-1} \u^{i+1}_h - \frac{\beta}{\rho} |\u^i_h| \u^{i+1}_h + \f_h(.,C_h^i) \big)\psi_{\kappa} && \hspace{-0.cm} \mbox{ on } \kappa, \\
0 && \hspace{-0.cm}\mbox{ on } \Omega \backslash \kappa, \\
\end{array}
 \right.
\end{equation*}
where $K_h^{-1}$ is an approximation of $K^{-1}$ which is a constant tensor in each triangle.
We obtain the following equation:
\begin{equation*}
\begin{array}{ll}
\medskip
 \ds  \int_\kappa | (-\nabla p_h^{i+1}- \gamma
 (\u_h^{i+1} - \u_h^{i}) -
 \frac{\mu}{\rho}
 K_h^{-1} \u^{i+1}_h - \frac{\beta}{\rho} |\u^i_h| \u^{i+1}_h + \f_h(.,C_h^i)) \psi_\kappa^{1/2} |^2 d\x = \\
 \medskip
 \hspace{2cm} \ds \frac{\mu}{\rho}
 \int_\kappa   (K^{-1} - K_h^{-1}) \u^{i+1}_h \cdot \v \, d\x +
 \frac{\mu}{\rho} \int_\kappa K^{-1} (\u - \u^{i+1}_h) \cdot \v \, d\x +
 \frac{\beta}{\rho} \int_\kappa (|\u| \u - |\u^i_h| \u^{i+1}_h) \cdot \v \, d\x \\
 \medskip
 \hspace{2cm}  \ds + \ds\int_{\kappa} \nabla  ( p-p_h^{i+1} ) \cdot \textbf{v}\,d\x   - \gamma \ds \int _{K} (\u_h^{i+1} - \u_h^i) \cdot \v \, d\x, + \int_{\kappa} (\f(.,C)-\f_h(.,C))\cdot \v \,d\x \\
 \medskip
 \hspace{2cm} \ds + \ds \int_{\kappa} (\f_h(,C)-\f_h(.,C_h^i))\cdot \v \,d\x,
\end{array}
\end{equation*}
and then by using Lemma \ref{uborne6}, the bound \eqref{fhlipshitz} and Properties \ref{psi} and \ref{psii}, we get  the following bound:
\begin{equation} \begin{aligned}
& \left\|-\nabla p_h^{i+1}-
 \gamma (\u_h^{i+1} - \u_h^{i}) -  \frac{\mu}{\rho}
 K^{-1}_h \u^{i+1}_h - \frac{\beta}{\rho} |\u^i_h| \u^{i+1}_h + \f_h(.,C_h^i)\right\|_{L^2(\kappa)} \\
 & \leq \hat{c} \big( \left\|K^{-1}-K_h^{-1}\right\|_{L^3(\kappa)}+\frac{\mu K_M}{\rho} \left\|\u -\u_h^{i+1}\right\|_{L^2(\kappa)} + \left\| \frac{\beta}{\rho} ( |\u|\u - |\u_h^i|\u_h^{i+1}) + \nabla ( p-p_h^{i+1}]\right\|_{L^2(\kappa)} \\
 &+ \gamma \left\|\u_h^{i+1}-\u_h^i\right\|_{L^2(\kappa)} + \left\|\f(.,C)-\f_h(.,C)\right\|_{L^2(\kappa)} + c_{\f_1} \left\|C-C_h^{i+1}\right\|_{H^1(\kappa)} \big).
\end{aligned}
\end{equation}
Finally, we get the result by using the following triangle inequality and Lemma \ref{uborne6}:
\begin{equation}\label{aux11}
\begin{array}{ll}
\medskip
 \ds  \| -\nabla p_h^{i+1}-
 \gamma (\u_h^{i+1} - \u_h^{i}) -  \frac{\mu}{\rho}
 K^{-1} \u^{i+1}_h - \frac{\beta}{\rho} |\u^i_h| \u^{i+1}_h + \f_h(.,C_h^i)
 \|_{L^2(\kappa)} \le \\
 \medskip
 \hspace{.5cm} \ds   \| -\nabla p_h^{i+1}-
 \gamma (\u_h^{i+1} - \u_h^{i}) -  \frac{\mu}{\rho}
 K^{-1}_h \u^{i+1}_h - \frac{\beta}{\rho} |\u^i_h| \u^{i+1}_h + \f_h(.,C_h^i)
 \|_{L^2(\kappa)} + \frac{\mu}{\rho} \| K^{-1}  - K^{-1}_h \|_{L^3(\kappa)} \|  \u^{i+1}_h \|_{L^6(\kappa)}.
\end{array}
\end{equation}
\end{proof}
\section{Numerical results} \label{numerique}
The main goal of this section is to validate the theoretical results of the previous sections, all numerical simulations are in two dimensions and performed using Freefem++ (see \cite{hecht}). In this part, we will show two cases : the first one is an academic one when the numerical solution is compared to the known exact one, the second case treats the Led-Driven cavity which a very popular and interesting one.
\subsection{First test case}
In this subsection, the domain $\Omega$ is the unit square $]0,1[ \times ]0,1[$ and all computations start on a uniform initial triangular mesh obtained by dividing the domain into $N^2$ equal squares, each one subdivided into two triangles, so that the initial triangulation consists of $2N^2$ triangles.  We apply the numerical scheme $(V_{ahi})$ to the exact solution $(\u,p,C)=(\curl \psi,p,C)$ where $\psi, p,$  and $C$ are given by
\begin{equation}
\label{eq:1.78}
\ds \psi(x,y)=e^{-  \delta  ((x-0.5)^2+(y-0.5)^2)},
\end{equation}
\begin{equation}
    \label{pressionexacte}
    p(x,y)=x*(x-2/3)*y*(y-2/3),
\end{equation}
and
\begin{equation}
    \label{concentrexacte}
    C(x,y)=x^2*(x-1)^2*y^2*(y-1)^2*e^{-  \delta  ((x-0.5)^2+(y-0.5)^2)}
\end{equation}
with the choice $\delta=50; \gamma=\beta=10;  \alpha=\mu=\rho=r_0=1$; $K=I$ and $\f_1(C)=(2+C,2+2sin(C)).$ Thus, we compute $\f_0$ and $g$ by using their expressions in problem $(P)$.
For the choice of the parameter $\gamma$, we refer to \cite{semaan2} where we compared the numerical scheme for different values of the parameter $\gamma$. In addition, we take $N=20$ on the initial mesh.\\

The theory is tested by applying the numerical scheme $(V_{ahi})$ to the exact solution, we consider the total error:
$$Err=\ds \Big( \frac{||\u_h^{i+1}-\u||_{L^3(\Omega)} + ||\nabla(p_h^{i+1}-p)||_{L^{3/2}(\Omega)}+||C_h^{i+1}-C||_{H^1(\Omega)}}{||\u||_{L^3(\Omega)} + ||\nabla p||_{L^{3/2}(\Omega)} + ||C||_{H^1(\Omega)}} \Big).$$
It should be noted that in the definition of $Err$ we considered the norm of $||\u_h^{i+1}-\u ||$ in $L^3(\Omega)$, where the velocity lives, although in the definition of the indicator $\eta_{\kappa,i}^{(L_1)}$ we considered it in $L^2(\Omega)^d$.\\
For the computation of the numerical solution by using the scheme $(V_{ahi})$, it is convenient to compute the following global indicators:
\begin{equation}\nonumber
 \eta_{i}^{(D)}=\Big(\sum _{K \in
 \mathcal{T}_h } \big((\eta_{K,i}^{(D_1)})^2 + (\eta_{K,i}^{(D_2)})^2+(\eta_{K,i}^{(D_3)})^2\big)^{\frac{1}{2}}
\Big)
\end{equation}
and
\begin{equation}\nonumber
 \eta_{i}^{(L)}=\Big(\sum _{K \in
 \mathcal{T}_h }\big((\eta_{K,i}^{(L_1)})^2+(\eta_{K,i}^{(L_2)})^2\big)^{\frac{1}{2}}
\Big)
\end{equation}
where the indicators $\eta_{K,i}^{(D_i)} $ and $\eta_{K,j}^{(L_j)} $ with $i\in \{1,2,3\}$ and $j \in \{1,2\} $ are given in equations \eqref{premindic}-\eqref{dernindic}.
These indicators are used
for the stopping criteria given by the relation
\begin{equation} \label{stoppingg}
    \eta_{i}^{(L)} \leq \bar{\gamma} \eta_{i}^{(D)}
\end{equation}
where $\bar{\gamma}=0.01$. This criteria was first introduced in \cite{LAM11} and
\cite{ERN11} and used in \cite{semaan2}.

For the adaptive mesh (refinement and coarsening), we use routines in Freefem++. The indicators are used for mesh adaptation by the adapted mesh algorithm used in \cite{semaan2}, but here we add the convection-diffusion-reaction equation and for reader's convenience, we prefered to recall the algorithm:
\begin{enumerate}
\item[(1)] Given $(\u^i_h,p_h^i,C_h^i)$,
\begin{enumerate}
    \item[(a)]  Solve the problem $(V_{ahi})$ to compute $(\u_{h}^{i+1},p_h^{i+1},C_h^{i+1})$.
    \item[(b)] Calculate $\eta_i^{(D)}$ and $\eta_i^{(L)}$.
\end{enumerate}
\item[(2)] If the stopping criterion \eqref{stoppingg} is satisfied, go to (3), else set $\u_h^i=\u_h^{i+1}, C_h^i=C_h^{i+1}, p_h^i=p_h^{i+1}$ and go to (1).
\item[(3)] $\mbox{ }$
\begin{enumerate}
\item[(a)] If $\eta_i^{(D)}$ is smaller than a fixed error tolerance $\varepsilon=10^{-8}$, we stop the iterations and the algorithm.
\item[(b)] Else we adapt the mesh using the indicators $\eta_{K,i}^{(D)}$.
\end{enumerate}
\item[(4)] Set $i=i+1$ and go to (1).
\end{enumerate}
In Figure \ref{figure1}, we present the evolution of the mesh
during the iterations (initial, second and fourth refinement levels). We notice
that the mesh is concentrated in the region where the solution needs to be well described as the velocity is a ball concentrated at the center of $\Omega$.\\
\begin{figure}[ht!]
\hskip-1.8cm
\begin{subfigure}[b]{0.35\textwidth}
\centering
\includegraphics[width=7.5cm]{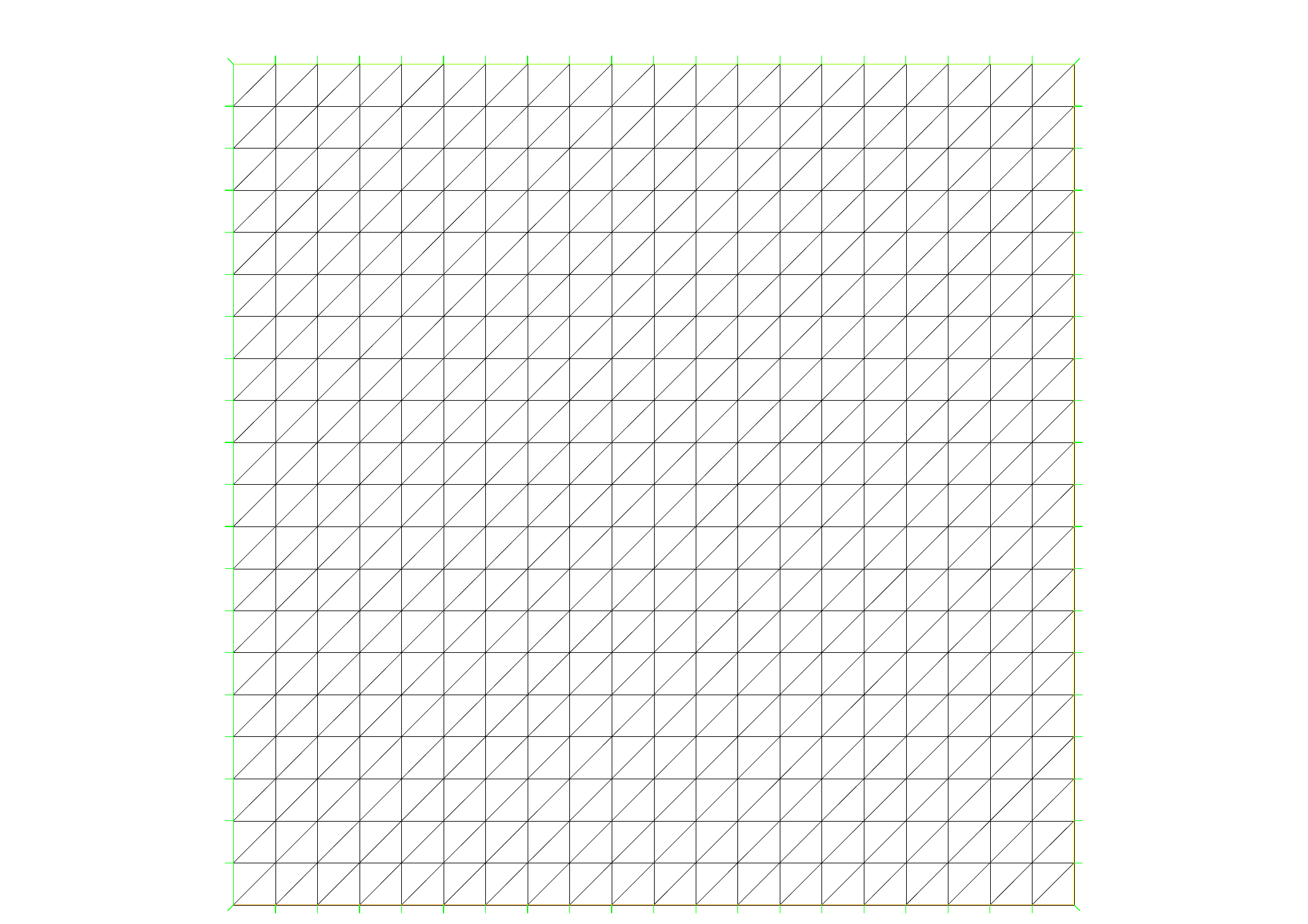}
\end{subfigure}
\hskip-.5cm
\begin{subfigure}[b]{0.35\textwidth}
\centering
\includegraphics[width=7.5cm]{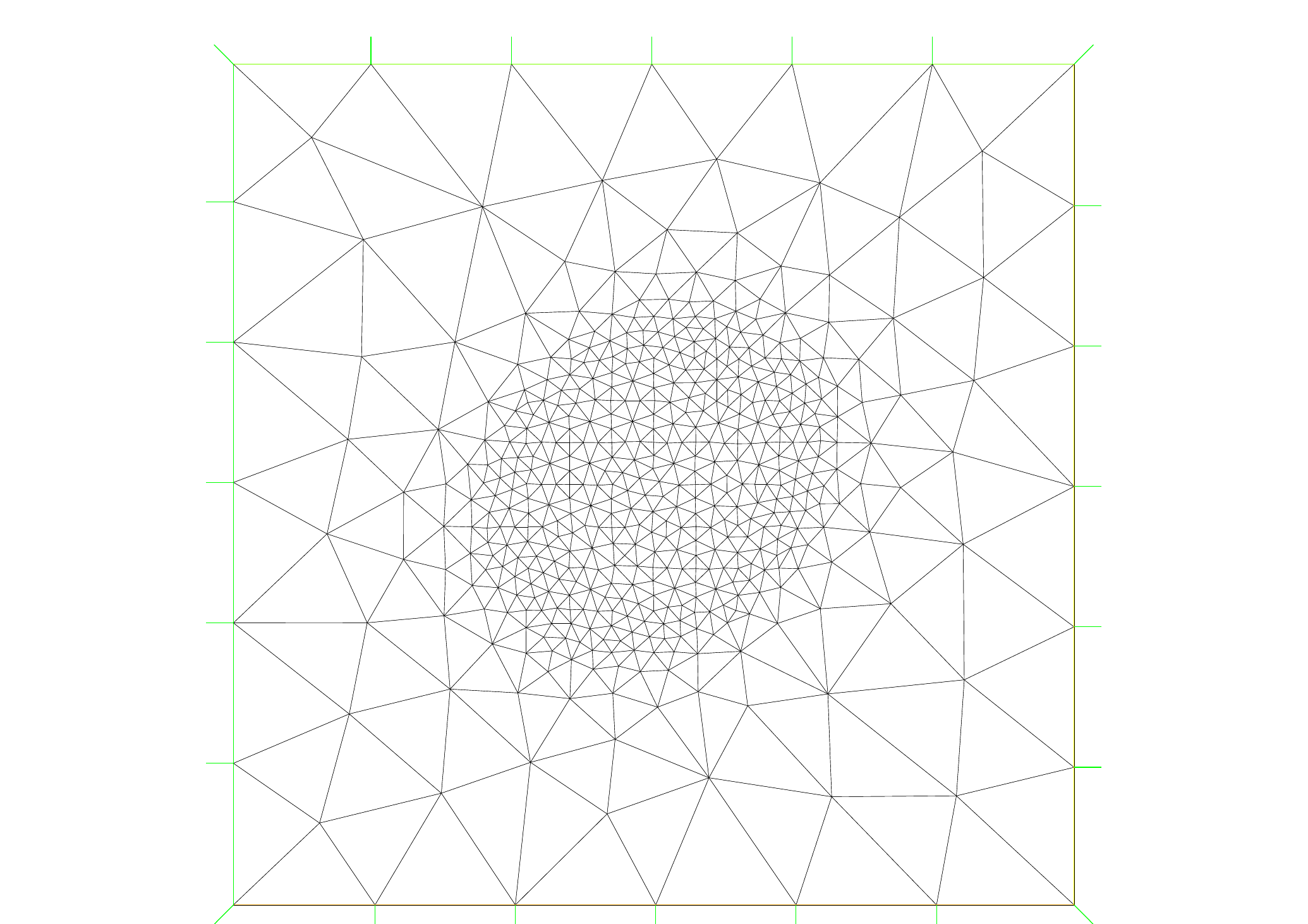}
\end{subfigure}
%
\hskip-.5cm
\begin{subfigure}[b]{0.35\textwidth}
\centering
\includegraphics[width=7.5cm]{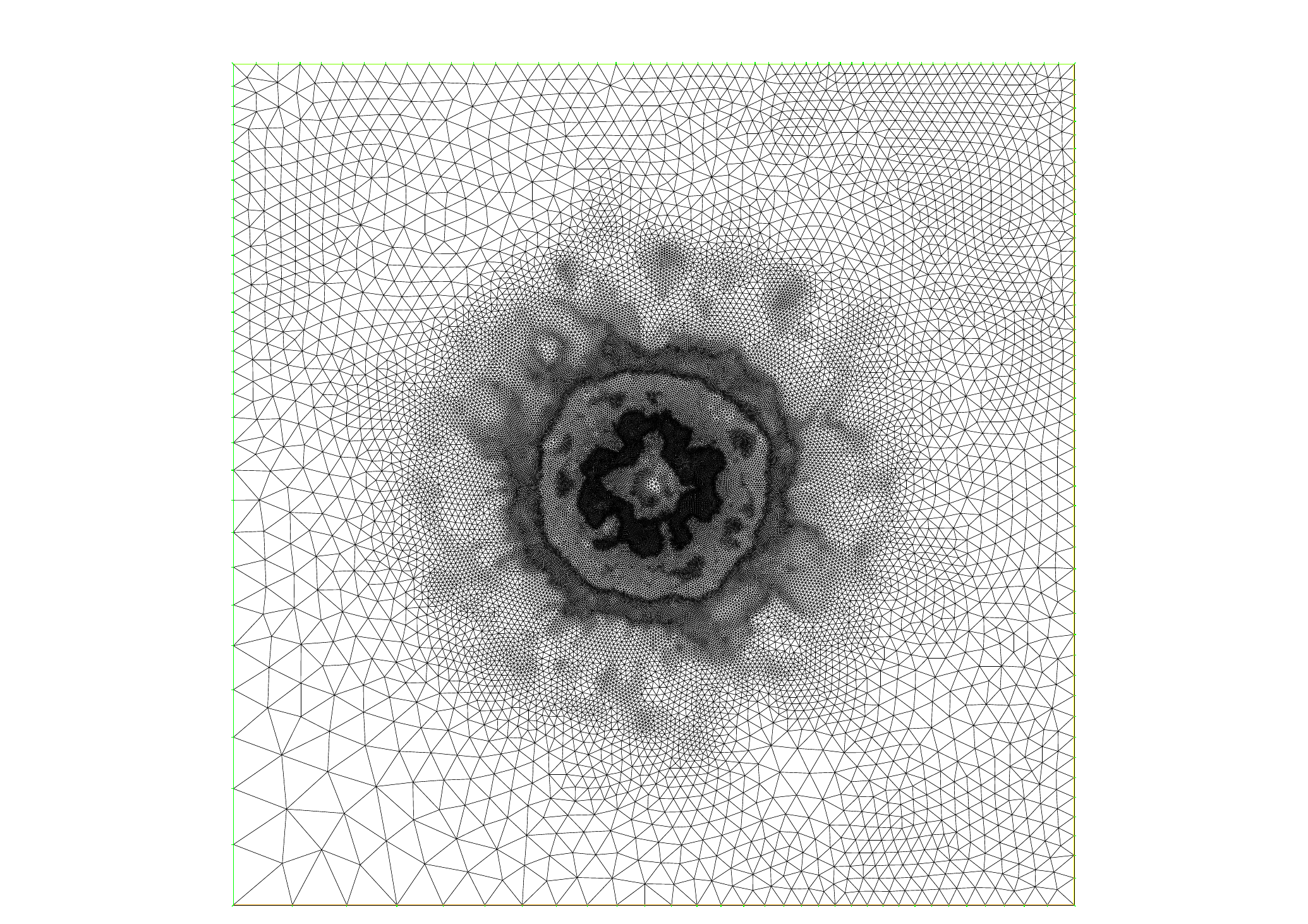}
\end{subfigure}
\caption{Evolution of the mesh during the refinement levels (initial, second and fifth).}\label{figure1}
\end{figure}

Tables \eqref{tableerruni} and \eqref{tableerradap} show the rate of convergence of the error $Err$ in logarithmic scale for the uniform and adaptive method.
\begin{table}[h!]
    \begin{tabular}{|l|l|l|}
  \hline
\bf  total degree of freedom &   \bf Err &  \bf rate \\
  \hline
  $3.87489$ & $-1.00064$ & $-$ \\
  \hline
  $4.28375$ & $-1.46323$ & $-1.13141418$ \\
  \hline
  $4.56799$ & $-1.76405$ & $-1.05833099$ \\
  \hline
  $4.79535$ & $-1.99443$ & $-1.0132829$ \\
  \hline
  $4.98563$ & $-2.1783$ & $-0.9663128$ \\
  \hline
  $5.13388$ & $-2.2859$ & $-0.72580101$\\
  \hline
  $5.26413$ & $-2.43679$ & $-1.15846449$\\
  \hline
  $5.38048$ & $-2.509$ & $-0.621315$\\
  \hline
  $5.48576$ & $-2.63355 $ & $-1.18303571$\\
  \hline
    \end{tabular}
    \caption{  Uniform method: rate of the error $E_{rr}$ with respect to the  total degree of freedom in
logarithmic scale.}
\label{tableerruni}
\end{table}
\begin{table}[h!]
    \begin{tabular}{|l|l|l|}
  \hline
  \bf  total degree of freedom & \bf  Err & \bf  rate \\
  \hline
  $3.87489$ & $-1.00064$ &    - \\
  \hline
  $4.09156$ & $-1.71735$ & $-3.30784142$ \\
  \hline
  $4.35629$ & $-1.97805$ & $-0.98477694$ \\
  \hline
  $4.64669$ & $-2.55506$ & $-1.98694904$ \\
  \hline
  $5.07834$ & $-2.97641$ & $-0.97613807$ \\
  \hline
  $5.47644$ & $-3.3199$ & $-0.86282341$ \\
  \hline
    \end{tabular}
    \caption{ Adaptive  method: rate of the error $E_{rr}$ with respect to the  total degree of freedom in
logarithmic scale.}
\label{tableerradap}
\end{table}
To go far with our numerical studies, we plot and study the error curves between the exact and numerical solutions corresponding to our problem.
Figure \eqref{courbes} plots the comparison of the global error curves versus the total degree of freedom in logarithmic scale. We notice that the errors of the adaptive mesh method are smaller than those given by the uniform method. Hence the efficiency of the adaptive mesh method.\\
\begin{figure}[h!]
\centering
\includegraphics[width=9.5cm]{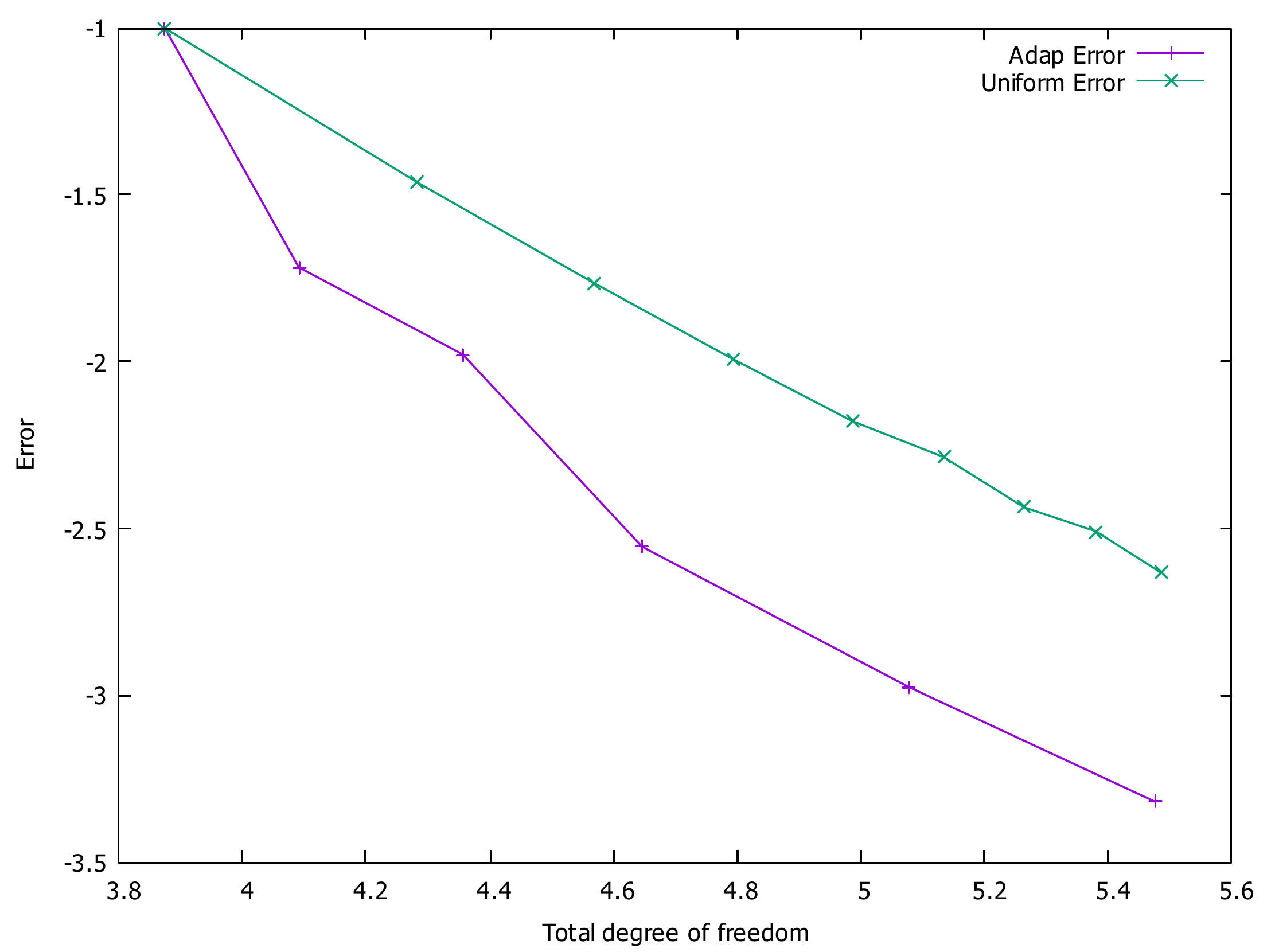}
\vskip -.4cm
\caption{Comparison of the errors $Err$ with respect to the total degree of freedom in logarithmic scale.}\label{courbes}
\end{figure}
In table \eqref{tableff}, we present the effectivity index defined as :
\[
EI = \ds \frac{\ds \eta_i^{(L)} +  \eta_i^{(D)}}{\ds
 \|\textbf{u}-\textbf{u}_h^{i+1}\|_{L^3(\Omega)} + ||\nabla(p - p_h^{i+1|}) ||_{L^{3/2}(\Omega)}+ ||C-C_h^{i+1}||_{H^1(\Omega)} }
\]
with respect to the number of vertices during the refinement levels. We remark that it decreases from $33.3389$ (first refinement level) to $9.98932$ (refinement level $6$).
\begin{table}[h!]
\begin{tabular} {|l||*{18}{c|}}
 \hline Refinement Level &  initial  & first  & second  & third  & fourth & fifth  \\
 \hline
 Number of vertices
 &441   & 485    &1481    & 4323  &15088  & 44940   \\
 \hline Effectivity index  & 33.3389 & 24.4027 & 12.2602 & 13.1712 & 13.1547 & 9.98932 \\
\hline
\end{tabular}
\vspace{.2cm} \caption{ EI with respect to the refinement levels.}
\label{tableff}
\end{table}
\subsection{Second test case (Driven cavity):}
The driven cavity is a test of performance algorithms in fluid problems. It was used in several works and among them we cite \cite{semaan,Schr83,jaddakroub}. 
In this subsection, we show numerical simulations corresponding to this test in order to study the {\it{a posteriori}} error estimates and the efficiency of the proposed method.
We suppose that $\Omega=]0,1[^2$, $K=I$, $\mu=r_0=1$, $\beta=20$, $\gamma=10$, $\f_0=\0$, $\f_1(C)=(10C,10C)$ and $g=0$. We complete the Darcy-Forchheimer equation with the boundary condition $\u.\n=0$ in $\partial \Omega$, and the convection-diffusion-reaction equation with the boundary condition $C=20 x(x-1)y(y-1)$ on the top $\Gamma_1$ of $\Omega$ and $C=0$ on $\partial \Omega \backslash \Gamma_1$.
Again, we consider an uniform initial mesh with  $N=20$ and we begin by showing comparisons between the uniform and adaptive methods corresponding to problem $(V_{ahi})$.
Figure \eqref{figure2}
present the evolution of the mesh during the iterations. We can see that, from an iteration to another, the concentration of the refinement is on the complex vorticity regions and at the top boundary corresponding to $y=1$.
\begin{figure}[ht!]
\hskip-1.8cm
\begin{subfigure}[b]{0.35\textwidth}
\centering
\includegraphics[width=7.5cm]{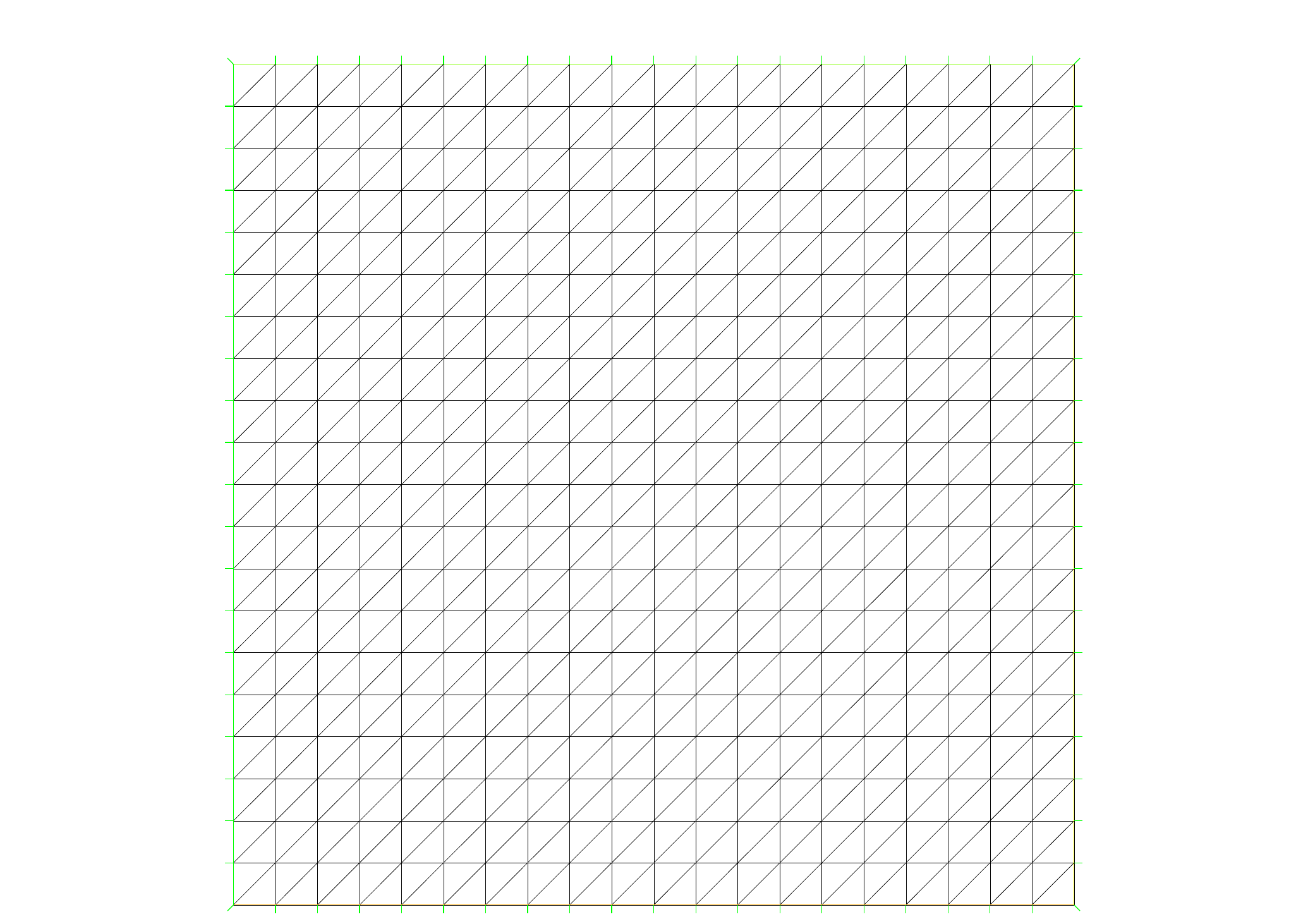}
\end{subfigure}
\hskip-.5cm
\begin{subfigure}[b]{0.35\textwidth}
\centering
\includegraphics[width=7.5cm]{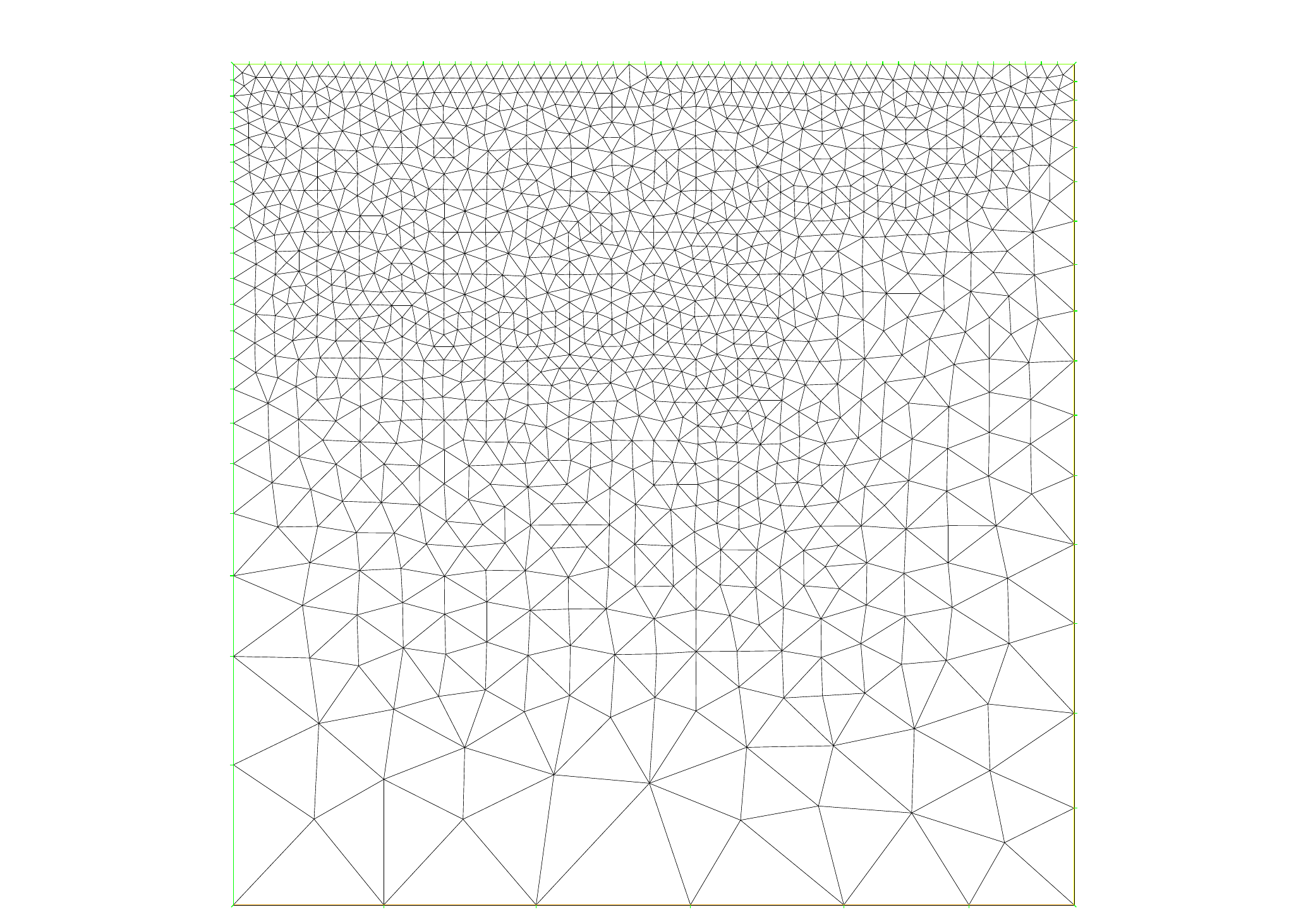}
\end{subfigure}
%
\hskip-.5cm
\begin{subfigure}[b]{0.35\textwidth}
\centering
\includegraphics[width=7.5cm]{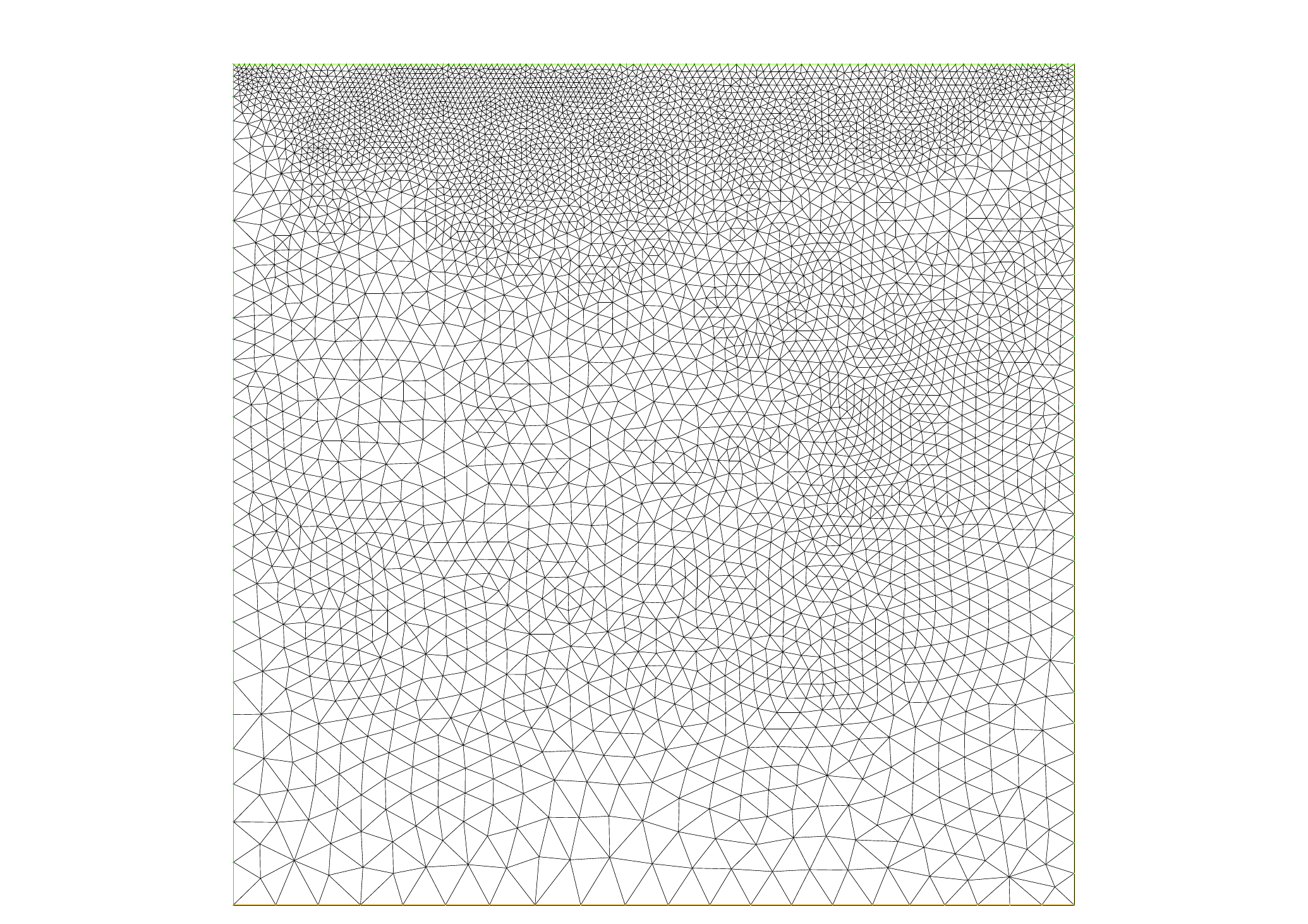}
\end{subfigure}
\caption{Evolution of the mesh during the refinement levels (initial, second and third).}\label{figure2}
\end{figure}
In Figures \eqref{velocity2_5}-\eqref{concentration2_5}, we consider the color velocity and concentration at the second refinement level. We remark that the solution is more important where the refinement of the mesh is concentrated (see figure \eqref{figure2}).
\begin{figure}[htbp]
\begin{minipage}[b]{0.49\linewidth}
\hspace{-.5cm}
\includegraphics[width=10cm]{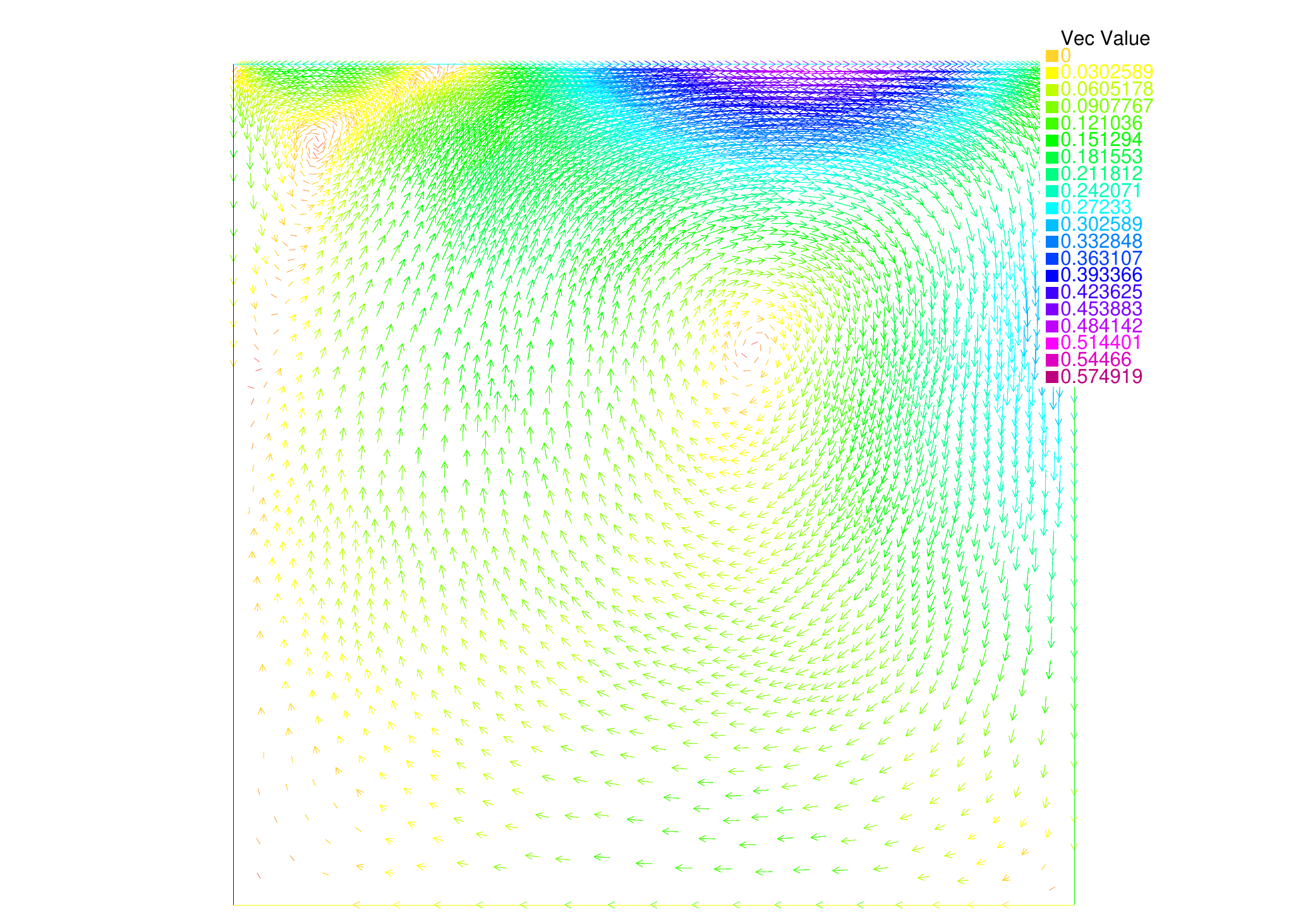}
\vspace{-.4cm}
\caption{numerical velocity at the fourth refinement level.} \label{velocity2_5}
\end{minipage}\hfill
\begin{minipage}[b]{0.49\linewidth}
\hspace{-.5cm}
\includegraphics[width=10cm]{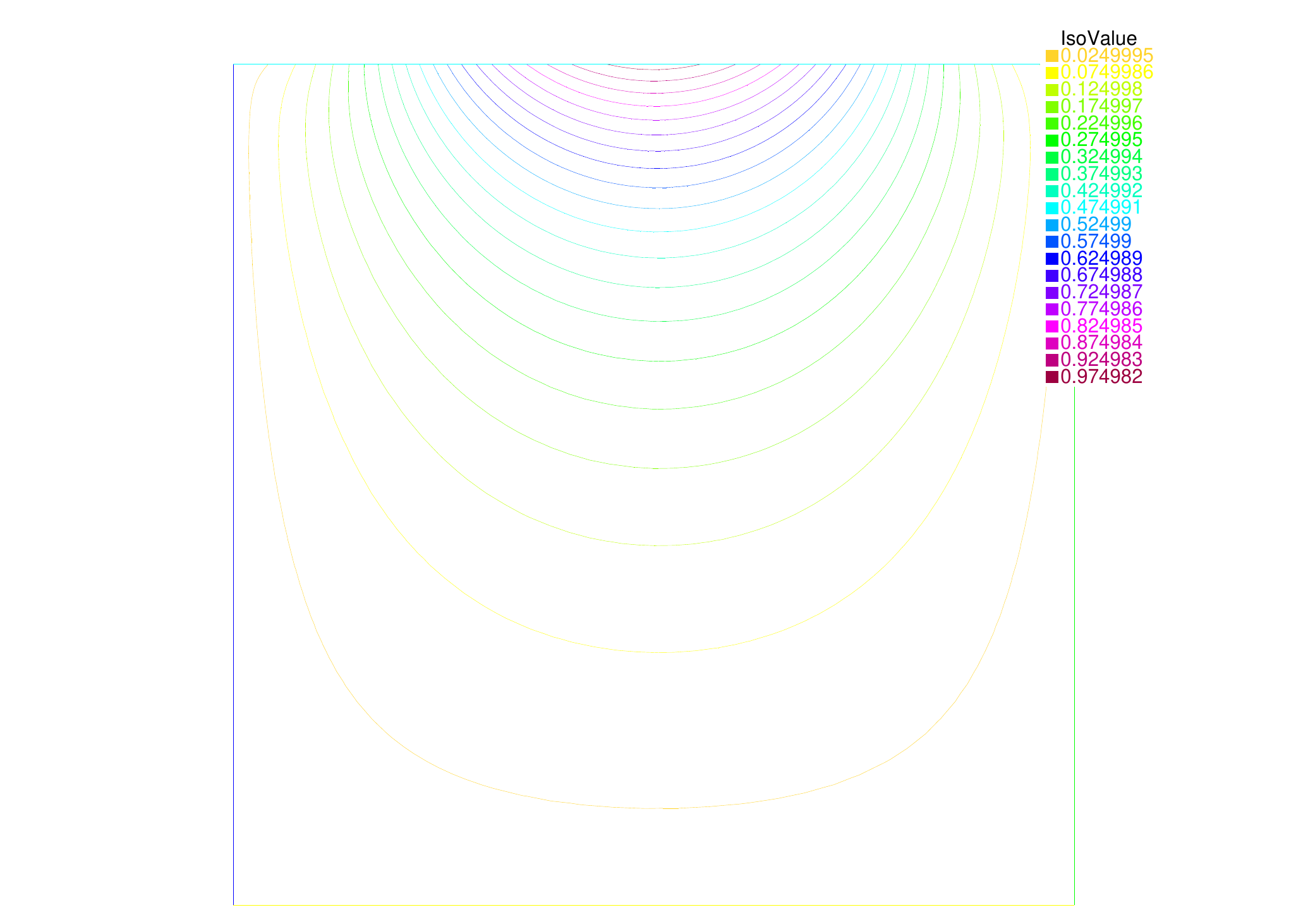}
\vspace{-.4cm}
\caption{numerical concentration at the fourth refinement level.} \label{concentration2_5}
\end{minipage} \hfill
\end{figure}

Now, we introduce the relative total error to the indicator given by
\[
E_{tot} = \ds \frac{\ds   \eta_i^{(D)}}{\ds
 \|\textbf{u}_h^i\|_{L^3(\Omega)} + ||\nabla(p_h^i) ||_{L^{3/2}(\Omega)}+ ||C_h^i||_{H^1(\Omega)} }
\]
where here also $\eta_i^{(D)}$ is computed after convergence on the iterations $i$ (by using the stopping criteria \eqref{stoppingg}).
In figure \eqref{test2courbe}, we plot the relative total error $E_{tot}$ for the uniform and the adaptive methods. Note that $E_{tot}$ represents the global indicator errors (while $E_{rr}$ in the previous case represents the total error between the exact and numerical solutions). We remark, like the first test case, that for the same total degree of freedom, the adaptive error is much smaller than the uniform error.
\begin{figure}[h!]
\centering
\includegraphics[width=9.5cm]{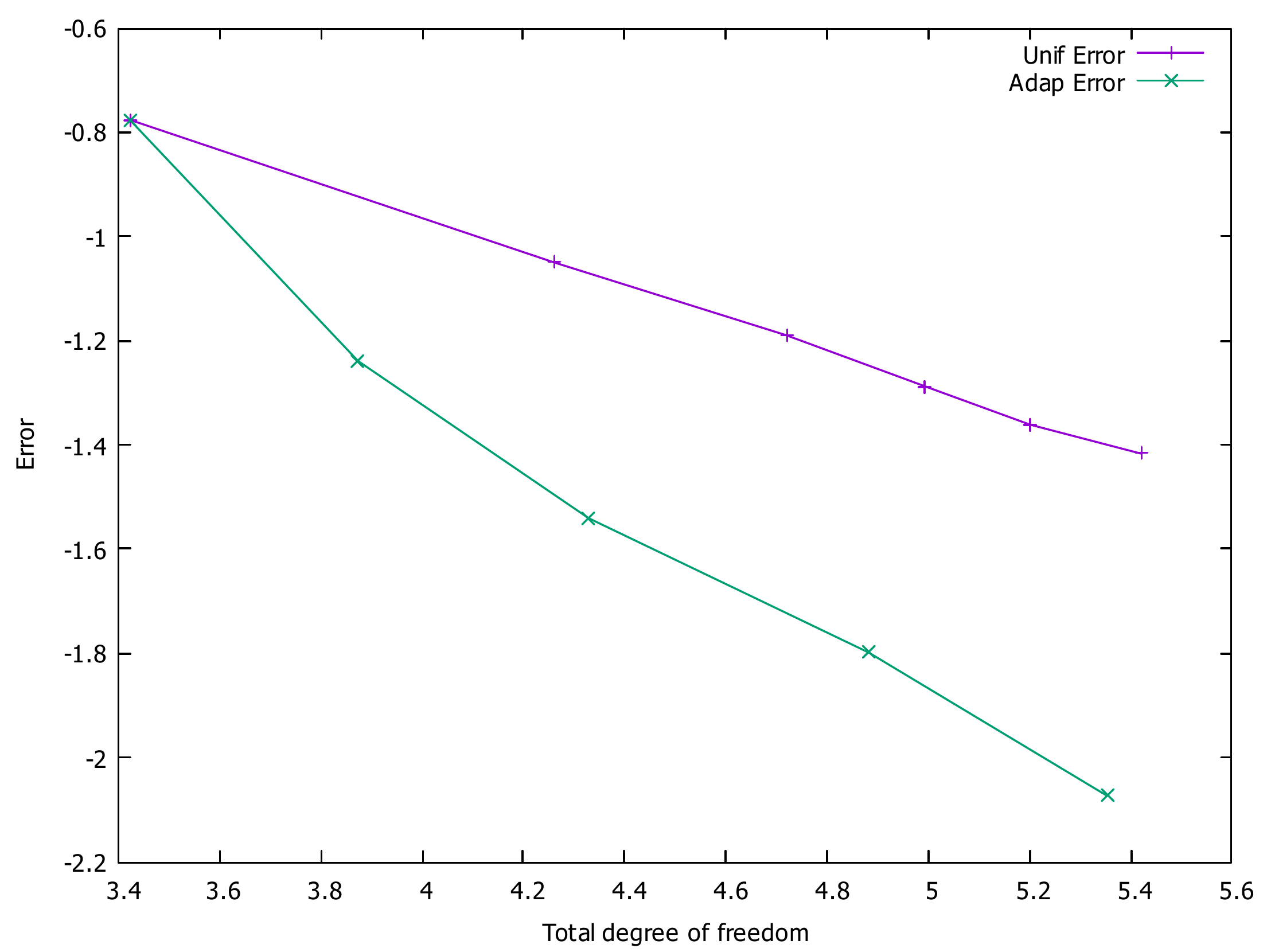}
\vskip -.4cm
\caption{Comparison of the errors $E_{tot}$ with respect to the total degree of freedom in logarithmic scale.}\label{test2courbe}
\end{figure}

\textbf{Conclusion}:
In this work, we introduced the variational formulation of the  Darcy-Forchheimer problem coupled with the convection-diffusion-reaction equation. We discretized the problem by using finite element method. We then  constructed  error indicators to evaluate
the errors of the numerical approximation. Finally, we performed several numerical simulations where the indicators are used for mesh adaptation, confirming the efficiency of the adaptive methods.

\newpage

\end{document}